 \renewcommand*{\backref}[1]{}
 \renewcommand*{\backrefalt}[4]{({%
     \ifcase #1 Not cited.%
           \or On p.~#2%
           \else On pp.~#2%
     \fi%
     })}
\crefname{subsection}{Subsection}{Subsection}
\newcommand{\setword}[2]{%
  \phantomsection
  #1\def\@currentlabel{\unexpanded{#1}}\label{#2}%
}
\newcommand{\s}{\mathscr{S}}
\renewcommand{\ss}{s\mathscr{S}}
\newcommand{\sss}{ss\mathscr{S}}
\newcommand{\sset}{s\set}
\newcommand{\ssset}{ss\set}
\newcommand{\C}{\mathscr{C}}
\newcommand{\D}{\mathscr{D}}
\newcommand{\E}{\mathscr{E}}
\newcommand{\J}{\mathscr{J}}
\renewcommand{\L}{\mathcal{L}}
\newcommand{\M}{\mathcal{M}}
\newcommand{\N}{\mathcal{N}}
\newcommand{\reb}{]}
\newcommand{\leb}{[}
\newcommand{\set}{\mathscr{S}\text{et}}
\newcommand{\cat}{\mathscr{C}\text{at}}
\newcommand{\Forget}{\mathrm{Unm}}
\newcommand{\id}{\mathrm{id}}
\newcommand{\ds}{\displaystyle}
\newcommand{\inc}{\mathrm{inc}}
\newcommand{\iplus}{i_1^+}
\newcommand{\pplus}{p_1^+}
\newcommand{\tplus}{t^+}
\newcommand{\Hom}{\mathrm{Hom}}
\newcommand{\Map}{\mathrm{Map}}
\newcommand{\Fun}{\mathrm{Fun}}
\newcommand{\comma}{,}
\newcommand{\rotatebot}{\rotatebox{270}{$\bot$}}
\newcommand{\rotatetiltbot}{\rotatebox[origin=r]{40}{$\bot$}}
\newcommand{\rotateothertiltbot}{\rotatebox[origin=r, units=-360]{40}{$\bot$}}
\newcommand{\Seg}{\mathcal{S}\mathrm{eg}}
\newcommand{\Comp}{\mathcal{C}\mathrm{omp}}
\newcommand{\Sp}{\mathrm{Sp}}
\newcommand{\Cop}{\mathcal{C}\mathrm{op}}
\newcommand{\sCop}{\mathrm{s}\Cop}
\newcommand{\GrothFib}{\mathscr{G}\mathrm{roth}\mathscr{F}\mathrm{ib}}
\newcommand{\ordered}[1]{< #1 >}
\newcommand{\St}{\mathrm{St}}
\newcommand{\Un}{\mathrm{Un}}
\newcommand{\sSt}{\mathrm{sSt}}
\newcommand{\sUn}{\mathrm{sUn}}
\newcommand{\adjun}[4]{
\begin{tikzcd}[row sep=0.5in, column sep=0.5in]
 #1  \arrow[r, shift left=1.8, "#3", "\bot"'] \pgfmatrixnextcell
 #2 \arrow[l, shift left=1.8, "#4"] 
\end{tikzcd}
}
\newcommand{\pbsq}[8]{
  \begin{tikzcd}[row sep=0.5in, column sep=0.5in]
    #1 \arrow[r, "#5"] \arrow[d, "#6"'] \arrow[dr, phantom, "\ulcorner", very near start]
    \pgfmatrixnextcell #2 \arrow[d, "#7"] \\
    #3 \arrow[r, "#8"']
    \pgfmatrixnextcell #4
  \end{tikzcd}
}
\newcommand{\liftsq}[8]{
  \begin{tikzcd}[row sep=0.5in, column sep=0.5in]
    #1 \arrow[r, "#5"] \arrow[d, "#6"']
    \pgfmatrixnextcell #2 \arrow[d, "#7"] \\
    #3 \arrow[r, "#8"] \arrow[ur, dashed]
    \pgfmatrixnextcell #4
  \end{tikzcd}
}
\newcommand{\simpset}[7]{

 \begin{tikzcd}[row sep=0.5in, column sep=0.5in]
   #1 \arrow[r, shorten >=1ex,shorten <=1ex]
   \pgfmatrixnextcell #2 
   \arrow[l, shift left=1.2, "#5"] \arrow[l, shift right=1.2, "#4"'] 
   \arrow[r, shift right, shorten >=1ex,shorten <=1ex ] \arrow[r, shift left, shorten >=1ex,shorten <=1ex] 
   \pgfmatrixnextcell #3 
   \arrow[l] \arrow[l, shift left=2, "#7"] \arrow[l, shift right=2, "#6 "'] 
   \arrow[r, shorten >=1ex,shorten <=1ex] \arrow[r, shift left=2, shorten >=1ex,shorten <=1ex] \arrow[r, shift right=2, shorten >=1ex,shorten <=1ex]
   \pgfmatrixnextcell \cdots 
   \arrow[l, shift right=1] \arrow[l, shift left=1] \arrow[l, shift right=3] \arrow[l, shift left=3] 
 \end{tikzcd}
}
\newcommand{\arrowref}[2]{\hyperref[#1]{#2}}
\newcommand{\somethingelse}[2]{ \begin{tikzcd}[row sep=0in] #1 \\ #2 \end{tikzcd}}
\newtheorem{theone}[equation]{Theorem}
\newtheorem{lemone}[equation]{Lemma}
\newtheorem{propone}[equation]{Proposition}
\newtheorem{corone}[equation]{Corollary}
\theoremstyle{definition}
\newtheorem{defone}[equation]{Definition}
\newtheorem{exone}[equation]{Example}
\theoremstyle{remark}
\newtheorem{remone}[equation]{Remark}
\newtheorem{notone}[equation]{Notation}
\newtheoremstyle{TheoremNum}
{}{}              
{\itshape}                      
{}                              
{\bfseries}                     
{.}                             
{ }                             
{\thmname{#1}\thmnote{ \bfseries #3}}
\theoremstyle{TheoremNum}
\numberwithin{equation}{section}
\def\@seccntformat#1{%
  \expandafter\ifx\csname c@#1\endcsname\c@section\else
  \csname the#1\endcsname\quad
  \fi}
\title{Quasi-Categories vs. Segal Spaces: Cartesian Edition}
\author{Nima Rasekh}
\address{{\'E}cole Polytechnique F{\'e}d{\'e}rale de Lausanne, SV BMI UPHESS, Station 8, CH-1015 Lausanne, Switzerland}
\email{nima.rasekh@epfl.ch}
\date{August 2021}
\keywords{higher category theory, Cartesian fibrations, complete Segal spaces, marked simplicial spaces}
\subjclass[2020]{18N60, 18N40, 18N45, 18N55}
\begin{document}

\begin{abstract}
We prove that four different ways of defining Cartesian fibrations and the Cartesian model structure
are all Quillen equivalent:
\begin{enumerate}
 \item On marked simplicial sets (due to Lurie \cite{lurie2009htt}),
 \item On bisimplicial spaces (due to deBrito \cite{debrito2018leftfibration}),
 \item On bisimplicial sets,
 \item On marked simplicial spaces. 
\end{enumerate}
 The main way to prove these equivalences is by using the Quillen equivalences between quasi-categories and 
 complete Segal spaces as defined by Joyal-Tierney and the straightening construction due to Lurie.
\end{abstract}

\maketitle
\addtocontents{toc}{\protect\setcounter{tocdepth}{1}}

\tableofcontents

\section{Introduction}\label{Sec Introduction}

\subsection{Functors via Fibrations}
For a given category $\C$, we are often interested in studying {\it pseudo-functors} of the form 
$$P: \C^{op} \to \cat$$
where $\cat$ is the (large) category of (small) categories. 
Such functors commonly arise in 
algebraic geometry (such as moduli problems \cite{sga4}), 
topos theory (such as internal toposes \cite{johnstone2002elephanti}), 
algebraic topology (such as moduli stacks in chromatic homotopy \cite{conrad2007moduliellipticcurves}), ... .

In certain mathematical situations, so-called {\it derived} or {\it homotopical mathematics}, 
the definition of a category is too strict and we need to work with a weaker notion of a category 
in which the usual axioms of a category only hold up to suitably compatible equivalences or ``homotopies" 
\cite{groth2010inftycategories}. The study of such 
categories is now usually called {\it the theory of $(\infty,1)$-categories} \cite{bergner2010survey}. 

Continuing our analogy with classical categories, the study of homotopical mathematics, 
such as {\it derived algebraic geometry} \cite{lurie2004dag}, {\it higher topos theory} \cite{lurie2009htt}, ... , 
again requires us to study $(\infty,1)$-functors 
$$P: \C^{op} \to \cat_{(\infty,1)},$$
however, unlike the classical case, this is quite challenging and often not feasible. Indeed, because of the higher coherence conditions, 
any such functor $P$ requires an infinite tower of data, which usually prevents us from giving explicit descriptions. 

Fortunately, there is an alternative approach towards such functors in the context of categories: {\it fibrations}. A functor $p: \D \to \C$ is 
called a {\it Grothendieck fibration} if every morphism in $\C$, with specified lift of the target in $\D$, can be lifted to a 
$p$-Cartesian morphisms in $\D$. 

Here, a morphism $f: x \to y$ in $\D$ is $p$-Cartesian if the induced commutative square
\begin{center}
 \begin{tikzcd}
  \D_{/y} \arrow[r, "f^*"] \arrow[d, "p"] & \D_{/x} \arrow[d, "p"] \\
  \C_{/p(y)} \arrow[r, "p(f)"] & \C_{/p(x)} 
 \end{tikzcd}
\end{center}
is a pseudo-pullback of categories. 

Using this definition of fibration, Grothendieck \cite{grothendieck2003etalegroup} 
proved following equivalence between pseudo-functors and Grothendieck fibrations,
 $$\ds \int_\C: \Fun(\C^{op}, \cat) \xrightarrow{ \ \simeq \ } \GrothFib_\C,$$
now commonly called the {\it Grothendieck construction}, 
thus giving us a way to translate between Grothendieck fibrations and pseudo-functors valued in categories.

The fibrational approach is far more amenable to $(\infty,1)$-categorical techniques and thus can be generalized in a straightforward manner. 
This was done by Lurie \cite{lurie2009htt}. Using {\it quasi-categories}, which are simplicial sets that model $(\infty,1)$-categories 
\cite{boardmanvogt1973qcats,joyal2008notes,joyal2008theory}, he defined 
{\it Cartesian fibrations} as inner fibrations of simplicial sets that have sufficient $p$-Cartesian lifts. 

Following a long tradition in homotopy theory, he then proceeded to define a {\it model structure} \cite{quillen1967modelcats}
such that the fibrant objects were precisely the Cartesian fibrations over a fixed simplicial set $S$. However, he did not define the model structure on the category of simplicial sets over $S$, but rather on the category of {\it marked simplicial sets over $S$}, which are simplicial sets with a chosen subset of the set of $1$-simplices. Hence, despite the fact that the definition of a Cartesian fibration required no markings, the markings were a crucial aspect of its model structure. 
\footnote{It is in fact widely believed that it is not possible to define a model structure on simplicial sets over a given base, 
such that the fibrant objects are Cartesian fibrations and cofibrations are monomorphisms.}

Using this model structure, aptly named the {\it Cartesian model structure}, he then proves a Quillen equivalence \cite[Theorem 3.2.0.1]{lurie2009htt}
\begin{center}
 \adjun{(\sset^+_{/S})^{Cart}}{\Fun(\mathfrak{C}[S]^{op},(\sset^+)^{Cart})^{proj}}{\St^+_S}{\Un^+_S}
\end{center}
which is simply a technical correspondence between Cartesian fibrations and functors valued in $(\infty,1)$-categories, 
thus giving us a concrete way to transition between those, generalizing the Grothendieck construction from categories to 
$(\infty,1)$-categories. 

In the subsequent years, several authors have studied Cartesian fibrations. 
For example, Riehl and Verity study Cartesian fibrations in the context of an $\infty$-{\it cosmos}, 
which is a formal approach to $(\infty,1)$-categories \cite{riehlverity2017inftycosmos} motivated by formal category theory. 
Another example is the work by Ayala and Francis, who study the quasi-category of Cartesian fibrations from the 
perspective of {\it exponentiable fibrations} \cite{ayalafrancis2020fibrations}.
It should be noted, however, that neither approach results in a new model category for Cartesian fibrations. 
Rather they use the fact that the definition of $\infty$-cosmoi or quasi-categories are less strict to 
study Cartesian fibrations directly, without having to take an external approach.
 
\subsection{Cartesian Fibrations via Complete Segal Objects}
In this paper we propose an alternative approach towards Cartesian fibrations via {\it complete Segal objects} 
(rather than marked simplicial objects) which results in a new a model category of Cartesian fibrations. 
In order to understand this different approach it is necessary to understand complete Segal spaces, right fibrations and how
they can interact.

A {\it complete Segal space} \cite{rezk2001css} is a simplicial space 
\begin{center}
 \simpset{W_0}{W_1}{W_2}{}{}{}{}
\end{center}
that satisfies three sets of conditions (the Reedy, Segal and completeness conditions). 
These conditions imply that complete Segal spaces are another model of $(\infty,1)$-categories 
and in particular equivalent to quasi-categories \cite{joyaltierney2007qcatvssegal} and can in fact be seen as the standard model \cite{toen2005unicity}. 

The benefit of complete Segal spaces is that the conditions can all be expressed via certain finite limit diagrams \cite{rezk2010thetanspaces}. 
Hence, we can easily generalize a complete Segal space 
to a {\it complete Segal object} in any category (or model category or $(\infty,1)$-category) with finite limits.
Using our intuition from complete Segal spaces, we expect a complete Segal object in a category to play the role of an 
{\it internal $(\infty,1)$-category}.\footnote{This idea is in fact not new. As an example, see the notion of a {\it Rezk object} in 
\cite{riehlverity2017inftycosmos}.} 
So, in particular, a complete Segal object in the category of space-valued presheaves is 
precisely a presheaf valued in complete Segal spaces i.e. $(\infty,1)$-categories.

The discussion in the previous paragraph motivates us to study fibrations that correspond to presheaves valued in spaces. 
As spaces are special kinds of $(\infty,1)$-categories, we could simply think of them as special kinds of Cartesian fibrations. 
However, it turns out there is a direct approach as well: right fibrations \cite{joyal2008notes,joyal2008theory}. 
A {\it right fibration} is a map of simplicial sets that satisfies the right lifting condition with respect to squares 
\begin{center}
 \liftsq{\Lambda[n]_i}{R}{\Delta[n]}{S}{}{}{}{}
\end{center}
where $n \geq 0$, $0 < i \leq n$.
Unlike Cartesian fibrations we can in fact endow the category of simplicial sets over a fixed simplicial set with a model structure, 
the {\it contravariant model structure}, such that the fibrant objects are precisely the right fibrations. 
It was first proven by Lurie \cite{lurie2009htt}
(and later many other authors \cite{stevenson2017covariant,heutsmoerdijk2015leftfibrationi,heutsmoerdijk2016leftfibrationii}), 
that this model structure is Quillen equivalent to presheaves valued in spaces. 
Moreover, we can give an analogous definition of right fibrations and contravariant model structure in the context of simplicial spaces 
in a way that is Quillen equivalent to the contravariant model structure for simplicial sets \cite{rasekh2017left,debrito2018leftfibration}.

Combining the two previous paragraphs, we should expect that a complete Segal object in right fibrations, which has the form
\begin{center}
 \begin{tikzcd}
   R_0 \arrow[dr] \arrow[r, shorten >=1ex,shorten <=1ex]
   & R_1 \arrow[d] 
   \arrow[l, shift left=1.2] \arrow[l, shift right=1.2] 
   \arrow[r, shift right, shorten >=1ex,shorten <=1ex ] \arrow[r, shift left, shorten >=1ex,shorten <=1ex] 
   & R_2 \arrow[dl] 
   \arrow[l] \arrow[l, shift left=2] \arrow[l, shift right=2] 
   \arrow[r, shorten >=1ex,shorten <=1ex] \arrow[r, shift left=2, shorten >=1ex,shorten <=1ex] \arrow[r, shift right=2, shorten >=1ex,shorten <=1ex]
   & \cdots 
   \arrow[l, shift right=1] \arrow[l, shift left=1] \arrow[l, shift right=3] \arrow[l, shift left=3] 
   \\
   & W & 
 \end{tikzcd}
 ,
\end{center}
recovers functors valued in $(\infty,1)$-categories. This intuition is in fact correct and the goal of this paper 
is to make this statement precise and prove it.

Combining our previous observation, we are thus studying two ways of constructing Cartesian fibrations, the marked approach 
and the complete Segal approach, using two models of $(\infty,1)$-categories, quasi-categories and complete Segal spaces, 
giving us a total of four characterizations of Cartesian fibrations, which we can depict as follows:
\begin{center}
 \begin{tabular}{|c|c|c|}
  \hline
  & Marked Approach & Complete Segal Object Approach \\ \hline 
  Quasi-Categories  & \somethingelse{(\sset^+_{/S})^{Cart}}{\text{\cref{the:cart model structure marked simp set}}} & 
  \somethingelse{(\ssset_{/S})^{Cart}}{\text{\cref{the:cart model structure bisimp set}}} \\ \hline 
  Complete Segal Spaces & \somethingelse{(\ss^+_{/X})^{Cart}}{\text{\cref{the:cart model structure marked simp spaces}}} & 
  \somethingelse{(\sss_{/X})^{Cart}}{\text{\cref{the:cart model structure bisimp spaces}}} \\ \hline 
 \end{tabular}
\end{center}
 The marked approach using quasi-categories is the original approach due to Lurie. 
 The complete Segal approach using complete Segal spaces is studied on its own in \cite{rasekh2021cartesiancss}, and directly results in the complete Segal 
 approach using quasi-categories, which we cover in \cref{subsec:cart fib bisimp}. 
 This leaves us with the marked approach using complete Segal spaces, which we study in
 \cref{sec:marked simplicial spaces and Cartesian fibrations}. 
 Finally, we show all four model structures are indeed Quillen equivalent in \cref{sec:equivalence of Cartesian model structures}.

\subsection{Why the Complete Segal Object Approach?}
 Given that we already have a working definition of Cartesian fibrations using marked simplicial objects, 
 why present an alternative approach using complete Segal objects?
 
 {\bf Studying Cartesian Fibrations via Right Fibrations:}
 One set of applications follows from the 
 key observation that in the complete Segal approach Cartesian fibrations are a direct generalization of 
 right fibrations and thus, their properties can be directly deduced from right fibrations.
 \begin{itemize}
  \item {\bf Generalized Cartesian Fibrations:}
   We can relax the complete Segal conditions used to define Cartesian fibrations to define generalized Cartesian fibrations, that can be used to study presheaves valued in various other localizations of simplicial spaces, such as {\it Segal spaces} \cite{rasekh2021cartesiancss}. 
  \item {\bf Invariance of Cartesian Fibrations:}
  One important result about the Cartesian model structure is the fact that it is {\it invariant under categorical equivalences}, 
  meaning base change by a categorical equivalence gives us a Quillen equivalence. 
  This is proven in \cite[Proposition 3.3.1.1]{lurie2009htt} using the fact that the Cartesian model structure 
  is Quillen equivalent to a presheaf category.
  
  We can use a similar equivalence between the contravariant model structure and a presheaf category to similarly deduce that 
  right fibrations are invariant under categorical equivalences. However, in this case there is also an alternative proof, which 
  uses the combinatorics of the contravariant model structure \cite{heutsmoerdijk2015leftfibrationi}.
  Using the complete Segal object approach to Cartesian fibrations we can generalize that proof immediately from right fibrations 
  to Cartesian fibrations without having to translate to presheaf categories. 
  \item {\bf Grothendieck Construction of Cartesian Fibrations:} 
  Another example where we can exploit the connection to right fibrations is the Grothendieck construction. 
  In \cite{lurie2009htt}, Lurie first proves the equivalence between right fibrations and space-valued presheaves. 
  He then wants to generalize that to an equivalence between Cartesian fibrations and $(\infty,1)$-category-valued presheaves,
  but cannot directly do so and thus needs several detailed and complicated proofs.
  
  On the other hand, in the complete Segal object approach to Cartesian fibrations every proof of the equivalence between 
  right fibrations and space valued presheaves immediately generalizes to a new proof of the Grothendieck fibration for 
  Cartesian fibrations.
  
  \item {\bf Grothendieck Construction over $1$-Categories:}
  One particular instance of the previous point is the study of Cartesian fibrations over ordinary categories. 
  In \cite{heutsmoerdijk2015leftfibrationi}, the authors give a Grothendieck construction for right fibrations very much along the lines of the
  classical Grothendieck construction for categories (also known as the {\it category of elements}). They then prove that this gives us a Quillen equivalence. 
  Again, using the complete Segal approach, we can realize that using the category of elements we can construct an equivalence 
  between Cartesian fibrations over categories and presheaves. This construction is much simpler than the general 
  unstraightening construction and even simpler than the specific one given over categories in \cite[3.2.5]{lurie2009htt}.
 \end{itemize}

 {\bf Studying Fibrations of $(\infty,n)$-Categories:} 
 The way we generalized $1$-categories to $(\infty,1)$-categories, we can generalize strict $n$-categories to $(\infty,n)$-categories 
 \cite{bergner2020surveyn}. 
 Moreover, similar to the $(\infty,1)$-categorical case, the study of functors 
  $$ P: \C^{op} \to \cat_{(\infty,n)},$$
 where $\C$ is an $(\infty,n)$-category, is quite a challenge, as we encounter the same problems with homotopy coherence. 
 Thus we again would like to develop an appropriate theory of fibrations. 
 
 However, the $(\infty,n)$-categorical case faces far more challenges than the $(\infty,1)$-categorical case. 
 First of all, similar to the $(\infty,1)$-categories, there are several models of $(\infty,n)$-categories: 
 {\it $\Theta_n$-spaces} \cite{rezk2010thetanspaces},
 {\it $\Theta_n$-sets} \cite{ara2014highersegal},
 {\it $n$-fold complete Segal spaces} \cite{barwick2005nfoldsegalspaces},
 {\it complicial sets} \cite{verity2008complicial},
 {\it comical sets} \cite{campionkapulkinmaehara2020comical},
 ... . However, it is not known whether these models are in fact equivalent (except for $\Theta_n$-sets, $\Theta_n$-spaces and
 $n$-fold complete Segal spaces \cite{bergnerrezk2013comparisoni,bergnerrezk2020comparisonii,ara2014highersegal}).
 Second, we don't have a general definition of fibrations for any of these models of $(\infty,n)$-categories.\footnote{There are some results about fibrations of $2$-complicial sets \cite{lurie2009goodwillie,gagnaharpazlanari2020inftytwolimits},  there called {\it scaled simplicial sets}.}
 
 Hence, unlike for $(\infty,1)$-categories, we cannot just focus on one model, prove all the relevant results, 
 and then translate the results, via a web of Quillen equivalences. If we need a theory of fibrations for a certain model, 
 then we really need to study it on its noted. 
 
 Finally, we also cannot just ignore certain models, with the hope that future results will facilitate translating results 
 from one model to another. First of all it is not certain that such equivalences of models will be developed anytime soon 
 (even the study of strict $n$-categories is quite challenging) and second of all most models already have concrete applications 
 in other branches of mathematics. 
 For example $n$-fold complete Segal spaces are currently the primary method for the study of topological field theories 
 \cite{lurie2009cobordism,calaquescheimbauer2019cobordism}. Or alternatively $2$-complicial sets (or alternatively $2$-comical sets) have found applications in the study of derived algebraic geometry \cite{gaitsgoryrozenblyum2017dagI,gaitsgoryrozenblyum2017dagII}, as explained in \cite[Page 2]{campionkapulkinmaehara2020comical}.
 
  As $\Theta_n$-spaces and $n$-fold complete Segal spaces are directly generalizations 
 of complete Segal spaces, we would expect that a complete Segal approach towards Cartesian fibrations can be a powerful tool 
 in the study of their fibrations.
 
\subsection{Where to go from here?}
 There are several interesting question about the connection between the various Cartesian model structures that remains unexplored:
 \begin{enumerate}
  \item The equivalence between the Cartesian model structures relies on proving that all four are equivalent to various 
  functor categories. It is not clear whether we can construct a direct equivalence between marked simplicial objects and bisimplicial objects 
  that induces an equivalence between the associated Cartesian model structures.
  \item Along the same lines, one of the benefits of using the marked simplicial approach is that it is reasonably easy to see that 
  Cartesian fibrations are themselves categorical fibrations. We would like to generalize this result and, for example, prove that 
  generalized Cartesian fibrations that characterize functors valued in Segal spaces are always Segal fibrations \cite{rasekh2021cartesiancss}. 
  However, the current Quillen equivalences do not allow us to draw such a conclusion.
  
  Answering this question might rely on first having a better way to translate between marked simplicial objects and bisimplicial objects. 
  \item 
  Assuming we can construct a direct equivalence between marked simplicial sets and bisimplicial sets, a final question would be 
  whether we can effectively use that to deduce all the results about Cartesian fibrations as originally proven in 
  \cite[Chapter 3]{lurie2009htt}, hence significantly simplifying the results.
 \end{enumerate}

\subsection{Relation to Other Work} 
 This paper is the second part of a three-paper series which introduces the bisimplicial approach to Cartesian fibrations: 
 \begin{enumerate}
  \item {\bf Cartesian Fibrations of Complete Segal Spaces} \cite{rasekh2021cartesiancss}
  \item {\bf Quasi-Categories vs. Segal Spaces: Cartesian Edition}
  \item {\bf Cartesian Fibrations and Representability} \cite{rasekh2017cartesianrep}
 \end{enumerate}
 In particular, the first paper includes a detailed analysis of the Cartesian model structure on bisimplicial spaces,
 which we only review here (\cref{subsec:cart fib bisimp}). 
 The third paper gives an application of the bisimplicial approach to the study of representable Cartesian fibrations.

\subsection{Notations} \label{subsec:notation}
 Given that the goal is to prove existence of several Quillen equivalences, we use many different model structures 
 (some times on the same category). Thus, in order to avoid any confusion, we will denote every category with its associated model structure.
 In order to help the reader, here is a list of all relevant model structures (except the four 
 Cartesian model structures already mentioned), the underlying category, along with the abbreviation and a reference 
 to their definition:
 
 \begin{center}
  \begin{tabular}{|l|l|l|l|}
   \hline 
   {\bf Model Structure} & {\bf Category(ies)} & {\bf Abbreviation} & {\bf Reference} \\ \hline 
   Joyal Model Structure & $\sset$ & $Joy$ & \cref{the:joyal model structure} \\ \hline 
   Complete Segal Space Model Structure & $\ss$ & $CSS$ & \cref{the:css model structure}  \\ \hline 
   Complete Segal Object Model Structure & $s\M$ & $CSO$ & \cref{the:cso model structure} \\ \hline 
   Contravariant Model Structure & $\ss_{/X}$ & $contra$ & \cref{the:contravariant Model Structure ss} \\ \hline 
   Contravariant Model Structure & $\sset_{/S}$ & $contra$ & \cref{the:contravariant Model Structure sset} \\ \hline 
      localized unmarked Reedy Model Structure & $\ss^+$ & $un^+Ree_\mathcal{L}$& 
   \cref{prop:transferred model structure on marked simplicial spaces} \\ \hline 
      unmarked CSS Model Structure & $\ss^+$ & $un^+CSS$ & \cref{cor:marked CSS} \\ \hline  
      marked CSS Model Structure & $\ss^+$ & $CSS^+$ & \cref{the:absolute Cartesian model structure on marked simplicial spaces} \\ \hline 
  \end{tabular}
 \end{center}
 
\subsection{Acknowledgments} \label{Subsec Acknowledgements}
 I would like to thank Charles Rezk for suggesting the study of Cartesian fibrations of simplicial spaces, 
 which has been the main motivation for this work. I would also like to thank Joyal and Tierney for their beautiful work in \cite{joyaltierney2007qcatvssegal}, which is the theoretical backbone of this paper and has been the motivation for the title. 

\section{Review of Relevant Concepts} \label{Sec Reminders Notation}
 In this section we review some important concepts and establish necessary notation. 
  
 \subsection{A Plethora of Simplicial Objects} \label{subsec:simp object}
 Let $\Delta$ be the simplex category with object posets $[n] =  \{ 0,1,...,n \} $ and morphisms maps of posets. 
 We will use a variety of simplicial objects, i.e. functors $X: \Delta^{op} \to \C$ 
 and thus need to distinguish them carefully, based on the value of the simplicial objects.
 
 {\bf Simplicial Sets/Spaces:} We will use two terminologies for functors $X: \Delta^{op} \to \set$: 
 \begin{itemize}
  \item It is a {\it space} if it is an object in the category of simplicial sets with the Kan model structure. 
  In that case the category of spaces is denote $\s$. 
  \item It is a {\it simplicial set} if it is an object in the category of simplicial sets with any other model structure 
  (such as Joyal, contravariant, ...). In that case the category of simplicial sets is denoted by $\sset$.
 \end{itemize}

 We will use following notation regarding simplicial sets/spaces:
 
 \begin{enumerate} 
  \item \label{item:morphism notation} $\Delta[n]$ denotes the simplicial set representing $[n]$ i.e. $\Delta[n]_k = \Hom_{\Delta}([k], [n])$. 
  \item $\partial \Delta[n]$ denotes the boundary of $\Delta[n]$ i.e. 
  the largest sub-simplicial set which does not include $id_{[n]}: [n] \to [n]$.
  Similarly $\Lambda[n]_l$ denotes the largest simplicial set in $\Delta[n]$ which does not include $l^{th}$ face.
  \item \label{item:Jn} Let $I[l]$ be the category with $l$ objects and one unique isomorphisms between any two objects.
  Then we denote the nerve of $I[l]$ as $J[l]$. 
  It is a Kan fibrant replacement of $\Delta[l]$ and comes with an inclusion $\Delta[l] \rightarrowtail  J[l]$,
  which is a Kan equivalence.
 \end{enumerate}
 
 {\bf Bisimplicial Sets/Simplicial Spaces:}
 Similar to above we will use two terminologies for functors $X: \Delta^{op} \times \Delta^{op} \to \set$: 
 \begin{itemize}
  \item It is a {\it simplicial space} if it is an object in the category of bisimplicial sets with the Reedy model structure 
  or any localization thereof. 
  In that case the category of simplicial spaces is denote $\ss$. 
  \item It is a {\it bisimplicial set} if it is an object in the category of bisimplicial sets with any other model structure 
  (such as Reedy contravariant, ...). In that case the category of bisimplicial sets is denoted by $\ssset$.
 \end{itemize}
 
  We will use following notation convention for simplicial spaces.
  
  \begin{enumerate}
  \item \label{item:Fn} Denote $F(n)$ to be the simplicial space defined as
  $$F(n)_{kl} = \Hom_{\Delta}([k],[n]).$$
  Similarly, we define the simplicial space $\Delta[l]$ as
  $$\Delta[n]_{kl} = \Hom_{\Delta}([l,[n])$$
  Note, $F(n) \times \Delta[n]$ generate the category of simplicial spaces. 
  \item Let $E(n)$ be denote the simplicial space defined as $E(n)_{kl}= J[n]_k$. In particular $E(1)$ is also known as the {\it walking isomorphism}.
  \item \label{item:embed} 
  We embed the category of spaces inside the category of simplicial spaces as constant simplicial spaces 
  (i.e. the simplicial spaces $S$ such that $S_n = S_0$ for all $n$ and all simplicial operator maps are identities). 
  \item $\partial F(n)$ denotes the boundary of $F(n)$. Similarly $L(n)_l$ 
  denotes the largest simplicial space in $F(n)$ which lacks the $l^{th}$ face.
  \item The category $\ss$ is enriched over spaces
  $$\Map_{\ss}(X,Y)_n = \Hom_{\ss}(X \times \Delta[n], Y).$$
  \item The category $\ss$ is also enriched over itself: 
  $$(Y^X)_{kn} = \Hom_{\ss}(X \times F(k) \times \Delta[n], Y).$$
  \item By the Yoneda lemma, for a simplicial space $X$ we have a bijection of spaces
  $$X_n \cong \Map_{\ss}(F(n),X).$$
  \end{enumerate}
 
 {\bf Bisimplicial Spaces:}
 We also use functors $X:\Delta^{op} \times \Delta^{op} \times \Delta^{op} \to \set$, 
 which we call {\it bisimplicial space} and denote by $\sss$. 

  \subsection{Diagrammatic Model Structures} \label{Subsec Reedy Model Structure}
  In this section we review some of the underlying model structures that we can define on the categories defined in \cref{subsec:simp object}.
  
  {\bf Kan Model Structure:}
  The {\it Kan model structure} is one of the first model structures defined and can already be found in \cite{quillen1967modelcats}.
  For another detailed account of the Kan model structure see \cite{goerssjardine1999simplicialhomotopytheory}.
  
  \begin{theone} \label{the:kan model structure}
   There is a unique simplicial, combinatorial, proper model structure on $\s$, called the Kan model structure and denoted $\s^{Kan}$ 
   characterized as follows:
   \begin{enumerate}
    \item A map is a cofibration if it is a monomorphism.
    \item A map is a fibration if satisfies the lifting right lifting property with respect to horns $\Lambda[n]_i \to \Delta[n]$.
    \item A map is a weak equivalence if it is homotopy equivalence. (its geometric realization is an equivalence of topological spaces).
   \end{enumerate}
  \end{theone}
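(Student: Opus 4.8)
The plan is to establish existence by verifying Quillen's axioms for the three stated classes, most efficiently via the recognition theorem for cofibrantly generated model categories applied to the presheaf category $\s$. Since $\s$ is a category of presheaves on $\Delta$ it is complete and cocomplete, so the limit axiom holds; the two-out-of-three property and closure under retracts for the weak equivalences follow at once from the corresponding properties of weak homotopy equivalences in $\topo$, transported through geometric realization $\lvert - \rvert \colon \s \to \topo$. The candidate generating cofibrations are the boundary inclusions $I = \{ \partial \Delta[n] \to \Delta[n] : n \geq 0 \}$ and the candidate generating trivial cofibrations are the horn inclusions $J = \{ \Lambda[n]_i \to \Delta[n] : n \geq 1,\ 0 \leq i \leq n \}$. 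A routine combinatorial argument shows that the saturation of $I$ is exactly the class of monomorphisms, so once the recognition theorem applies the cofibrations will be precisely the monomorphisms, as demanded by clause (1).

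First I would set up the theory of \emph{anodyne extensions}, defined as the saturation of $J$, and identify the maps with the right lifting property against $J$ as the \emph{Kan fibrations}, yielding clause (2). The small object argument applied to $I$ and to $J$ (all domains and codomains are finite, hence small) produces the two functorial factorizations needed for the factorization axiom. The heart of the matter is matching these two weak factorization systems with the weak equivalences: I must show that a map is an acyclic fibration if and only if it has the right lifting property against $I$, and dually that a cofibration is a weak equivalence if and only if it is anodyne. One direction, that each $\Lambda[n]_i \hookrightarrow \Delta[n]$ is a weak equivalence and hence that anodyne maps are acyclic cofibrations, is handled by checking that these horn inclusions realize to deformation retracts in $\topo$.

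The hard part will be the converse, that every acyclic cofibration is anodyne, equivalently that every map with the right lifting property against $I$ is an acyclic fibration. This is the genuinely nontrivial homotopical input, and I would obtain it through the theory of \emph{minimal fibrations}: one shows minimal Kan fibrations are fiber bundles, whence an acyclic Kan fibration admits a section and lifts against all boundary inclusions. A key auxiliary ingredient is that geometric realization preserves finite products after passage to compactly generated spaces, so that $\lvert X \times Y \rvert \cong \lvert X \rvert \times \lvert Y \rvert$; this simultaneously supplies the simplicial structure by reducing the pushout-product axiom to the statement that the pushout-product of an anodyne map with a monomorphism is anodyne. Left properness is automatic since every object is cofibrant, and right properness follows from stability of weak equivalences under pullback along fibrations, again a consequence of the minimal fibration analysis. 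Combinatoriality is then immediate, as $\s$ is locally presentable and the structure is cofibrantly generated by $I$ and $J$.

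Finally, uniqueness needs no further homotopy theory: clauses (1) and (3) fix the cofibrations and the weak equivalences, and in any model structure the fibrations are forced to be exactly the maps with the right lifting property against the acyclic cofibrations. Hence the three classes are determined, and any model structure satisfying the stated characterization coincides with the one just constructed.
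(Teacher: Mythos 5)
Your sketch is correct and is essentially the proof the paper relies on: the paper states this classical theorem without argument, citing Quillen's original construction and Goerss--Jardine, and your outline (recognition theorem for cofibrantly generated model structures, saturation of boundary inclusions giving the monomorphisms, anodyne extensions, minimal fibrations for the hard direction that acyclic fibrations lift against $\partial\Delta[n] \to \Delta[n]$, realization preserving finite products for SM7 and properness) is precisely the standard proof from those references. Nothing further to compare.
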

  
  {\bf Reedy Model Structure for Simplicial Objects:}
  Let $\C^\M$ be a model category with model structure $\M$. 
  Then we can define a new model structure on the category of simplicial objects $\Fun(\Delta^{op}, \C) = s\C$.
  This model structure exists for a wide range of model categories, however, here we will focus on the case of interest to us.
  It was originally constructed by Reedy \cite{reedy1974modelstructure}, however we will 
  use \cite{hirschhorn2003modelcategories} as our primary reference.

  \begin{theone} \label{the:reedy model structure}
  \cite[Theorem 15.3.4, Proposition 15.6.3]{hirschhorn2003modelcategories}
   Let $\C^\M$ be a combinatorial, simplicial, left proper model structure such that the cofibrations are monomorphisms. 
   Then there exists a simplicial, combinatorial, left proper model structure on simplicial objects in $\C$, 
   which we call the {\it Reedy model structure} and denote $s\C^{Ree_\M}$,
   which has following characteristics:
   \begin{enumerate}
    \item A map $p: Y \to X$ is a Reedy cofibration if it is a monomorphism.
    \item A map $p:Y \to X$ is a Reedy weak equivalence if it is level-wise a weak equivalence in $\C^\M$.
    \item A map $p:Y \to X$ is a Reedy fibration if for every $n \geq 0$ the induced map 
    $$Y_n \to \mathrm{M}_nY \times_{\mathrm{M}_nX} X_n$$
    is a fibration in $\C^\M$. Here $\mathrm{M}_n$ is the {\it $n$-th matching object} \cite[Definition 15.2.5]{hirschhorn2003modelcategories}.
    \end{enumerate}
  \end{theone}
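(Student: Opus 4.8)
The plan is to deduce the statement from the general theory of Reedy model structures, so the first step is to recognize $\Delta$ as a Reedy category: the degree of $[n]$ is $n$, the direct subcategory is generated by the injective (coface) maps and the inverse subcategory by the surjective (codegeneracy) maps. Consequently $\Delta^{op}$ is also a Reedy category, and the general existence theorem \cite[Theorem 15.3.4]{hirschhorn2003modelcategories} endows $s\C = \Fun(\Delta^{op},\C)$ with a model structure for any model category $\C^\M$. In this structure the weak equivalences are by definition the levelwise weak equivalences of $\C^\M$, which is exactly clause (2), and the fibrations are by definition the maps $p\colon Y \to X$ whose relative matching maps $Y_n \to \mathrm{M}_n Y \times_{\mathrm{M}_n X} X_n$ lie in the fibrations of $\C^\M$, which is exactly clause (3). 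Thus (2) and (3) hold with no additional argument, and the remaining content is the simplification of the cofibrations in clause (1) together with the three adjectives left proper, simplicial, and combinatorial.

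For clause (1), the general Reedy structure calls $p$ a cofibration precisely when each relative latching map $Y_n \sqcup_{\mathrm{L}_n Y} \mathrm{L}_n X \to X_n$ is a cofibration in $\C^\M$, i.e. a monomorphism. I would show this is equivalent to $p$ being a levelwise monomorphism, hence a monomorphism of simplicial objects. One direction is immediate, since a relative latching map is a monomorphism whenever $p$ is. For the converse I would use that $\Delta$ is an elegant (Eilenberg--Zilber) Reedy category: for such Reedy categories, and a value category in which the cofibrations are exactly the monomorphisms, the Reedy cofibrations coincide with the levelwise monomorphisms, obtained by the skeletal induction that rebuilds $p$ from its relative latching maps. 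This is the simplification recorded in \cite[Proposition 15.6.3]{hirschhorn2003modelcategories}, yielding clause (1).

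It then remains to upgrade this bare model structure to a left proper, simplicial, combinatorial one using the corresponding hypotheses on $\C^\M$. Left properness is the most transparent: pushouts, cofibrations, and weak equivalences in $s\C$ are all computed levelwise, so left properness is inherited levelwise from $\C^\M$. For the simplicial enrichment I would equip $s\C$ with the tensor, cotensor and mapping space induced levelwise from the simplicial structure of $\C^\M$, and verify the pushout-product axiom by reducing it, via the relative latching description of Reedy cofibrations, to the pushout-product axiom in $\C^\M$. For combinatoriality I would first note that $s\C$ is locally presentable because $\C$ is, and then exhibit explicit generating (trivial) cofibrations as the pushout-products of the boundary inclusions $\partial\Delta[n] \to \Delta[n]$ with the generating (trivial) cofibrations of $\C^\M$, checking by adjunction that these detect exactly the Reedy (trivial) fibrations.

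The main obstacle I anticipate is precisely this last combinatorial clause. Left properness and the enrichment reduce cleanly to levelwise statements about $\C^\M$, and the cofibration simplification is a formal consequence of the elegance of $\Delta$; but producing a genuine set of generating trivial cofibrations that detects the Reedy fibrations, rather than merely the levelwise ones, requires combining the small object argument with the latching-object adjunctions and verifying the detection property carefully. Everything else is either a direct citation of \cite[Theorem 15.3.4, Proposition 15.6.3]{hirschhorn2003modelcategories} or a levelwise consequence of the hypotheses on $\C^\M$.
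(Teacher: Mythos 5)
The paper never actually proves this theorem: it is imported wholesale from Hirschhorn, so the relevant comparison is with the content of those citations, and your reconstruction follows exactly that route (general Reedy existence for clauses (2) and (3), the Eilenberg--Zilber/elegance property of $\Delta$ for clause (1), and inheritance of the three adjectives). Your handling of the adjectives is correct: the external pushout-products of $\partial\Delta[n]\to\Delta[n]$ against generating (trivial) cofibrations of $\C^\M$ detect the Reedy (trivial) fibrations by the adjunction identifying maps out of $\Delta[n]\cdot A$ with maps $A \to Y_n$ and maps out of $\partial\Delta[n]\cdot A$ with maps $A \to \mathrm{M}_nY$, which gives combinatoriality; and since latching objects are colimits, they commute with levelwise tensors and pushouts, so the pushout-product axiom and left properness reduce to $\C^\M$.

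The one slip is in clause (1), where your two directions are labelled backwards. The implication you call immediate---$p$ a monomorphism implies the relative latching maps $Y_n \sqcup_{\mathrm{L}_nY} \mathrm{L}_nX \to X_n$ are monomorphisms---is precisely the nontrivial Eilenberg--Zilber content, and it genuinely fails for non-elegant Reedy categories (e.g.\ for cosimplicial objects, where levelwise monomorphisms need not be Reedy cofibrations). Conversely, the direction you defer to elegance---Reedy cofibration implies monomorphism---is the formal one, valid over any Reedy category: factor $Y_n \to Y_n \sqcup_{\mathrm{L}_nY} \mathrm{L}_nX \to X_n$, where the second map is a cofibration by hypothesis and the first is a cobase change of $\mathrm{L}_nY \to \mathrm{L}_nX$, which is a cofibration by induction on degree. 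Because the elegance statement you invoke is an equivalence (``Reedy cofibrations coincide with levelwise monomorphisms''), citing it still settles both directions at once, so your proof closes; but executed literally as written, the hard direction would be resting on an ``immediate'' claim that is not immediate, so the labels should be swapped.
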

  
  \begin{remone} \label{rem:reedy fibrant in simplicial spaces}
   If $\C^\M = \s^{Kan}$, then the $n$-th matching object of a simplicial space $X$ 
   is given by the space $\mathrm{M}_nX= \Map_{\ss}(\partial F(n), X)$ \cite[Proposition 15.6.15]{hirschhorn2003modelcategories}. 
  \end{remone}

  Notice the construction is invariant under Quillen equivalences.
  
  \begin{lemone} \label{lemma:reedy invariant}
  \cite[Proposition 15.4.1]{hirschhorn2003modelcategories}
   Let 
   \begin{center}
    \adjun{\C^\M}{\D^\N}{F}{G}
   \end{center}
    be a Quillen equivalence between model categories. 
    Then the induced adjunction 
   \begin{center}
    \adjun{(s\C)^{Ree_\M}}{(s\D)^{Ree_\N}}{sF}{sG}
   \end{center}
   is a Quillen equivalence as well.
  \end{lemone}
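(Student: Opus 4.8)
The plan is to verify the standard criterion for a Quillen equivalence after first checking that the degree-wise adjunction $sF \dashv sG$ is a Quillen adjunction. Recall that $sF$ and $sG$ are defined by applying $F$ and $G$ in each degree, $(sF X)_n = F(X_n)$ and $(sG Y)_n = G(Y_n)$, and that applying $\Fun(\Delta^{op}, -)$ to an adjunction produces an adjunction whose unit and counit are computed degree-wise; in particular, adjunct morphisms are taken degree-wise.

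First I would show that $(sF, sG)$ is a Quillen adjunction. The key observation is that, since $F$ is a left adjoint, it preserves the colimit defining the latching object, so that $F(\mathrm{L}_n X) \cong \mathrm{L}_n(sF X)$ naturally in $X$, where $\mathrm{L}_n$ denotes the $n$-th latching object. Because $F$ also preserves pushouts, it carries the relative latching map of a morphism $f \colon X \to Y$ in $s\C$ to the relative latching map of $sF(f)$; that is, $F(\mathrm{L}_n Y \cup_{\mathrm{L}_n X} X_n) \to F(Y_n)$ is identified with the level $n$ relative latching map of $sF(f)$. A morphism in a Reedy model structure is a (trivial) cofibration precisely when all of its relative latching maps are (trivial) cofibrations in the underlying model category; since $F$ is left Quillen it preserves cofibrations and trivial cofibrations, and therefore $sF$ preserves Reedy cofibrations and Reedy trivial cofibrations. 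Hence $(sF, sG)$ is a Quillen adjunction.

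Next I would upgrade this to a Quillen equivalence using the criterion that, for $X$ Reedy cofibrant and $Y$ Reedy fibrant, a map $g \colon sF(X) \to Y$ is a weak equivalence if and only if its adjunct $\hat{g} \colon X \to sG(Y)$ is. The crucial structural fact here is that a Reedy cofibrant object is degree-wise cofibrant and a Reedy fibrant object is degree-wise fibrant. This follows by the usual induction: $\mathrm{L}_0 X$ is the initial object, and each latching map $\mathrm{L}_n X \to X_n$ being a cofibration out of a (by induction) cofibrant object forces $X_n$ to be cofibrant, with the dual argument using matching objects giving degree-wise fibrancy. Thus each $X_n$ is cofibrant in $\C^\M$ and each $Y_n$ is fibrant in $\D^\N$.

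Finally, since both adjuncts and Reedy weak equivalences are detected degree-wise, the criterion reduces to level $n$: the map $g_n \colon F(X_n) \to Y_n$ is a weak equivalence in $\D^\N$ if and only if its adjunct $\hat{g}_n \colon X_n \to G(Y_n)$ is a weak equivalence in $\C^\M$, which is exactly the Quillen equivalence property of $(F, G)$ applied to the cofibrant object $X_n$ and the fibrant object $Y_n$. Assembling over all $n$ yields the desired equivalence. The only real subtleties — and where I would be most careful — are the compatibility $F(\mathrm{L}_n X) \cong \mathrm{L}_n(sF X)$, which rests on $F$ preserving colimits, and the passage from Reedy cofibrancy and fibrancy to degree-wise cofibrancy and fibrancy, since without the latter one could not invoke the level-wise Quillen equivalence of $(F, G)$.
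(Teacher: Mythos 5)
Your proposal is correct and coincides with the proof of the result the paper relies on: the paper gives no argument of its own for this lemma, simply deferring to \cite[Proposition 15.4.1]{hirschhorn2003modelcategories}, whose proof is exactly the one you wrote. Both of your steps are the standard ones — the left adjoint preserves latching objects and hence relative latching maps, so $sF$ preserves Reedy (trivial) cofibrations, giving the Quillen adjunction; and the equivalence criterion reduces level-wise because Reedy weak equivalences and adjuncts are computed degree-wise and Reedy cofibrant (resp.\ fibrant) objects are degree-wise cofibrant (resp.\ fibrant).
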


  {\bf Projective Model Structure for Functor Categories}
  The existence of the Reedy model structure relied on the fact that $\Delta$ is a Reedy category and so the Reedy model 
  structure cannot be applied to any functor category $\Fun(\C,\D)$. Fortunately, under mild assumptions on a model category, we 
  can construct the {\it projective model structure}.
  
  \begin{theone} \label{the:projective model structure}
   \cite[Theorem 11.6.1, Theorem 11.7.3]{hirschhorn2003modelcategories}
   Let $\C$ be a small category and $\D^\M$ a combinatorial, simplicial, left proper model category. 
   Then there exists a unique left proper, simplicial, combinatorial model structure on the functor category, 
   denoted $\Fun(\C,\D^\M)^{proj}$ and called the {\it projective model structure}, such that 
   \begin{enumerate}
    \item A map $\alpha: F \to G$ is a projective fibration if for all objects $c$ in $\C$ the map 
    $\alpha_c: F(c) \to G(c)$ is a fibration in $\D^\M$. 
    \item A map $\alpha: F \to G$ is a projective weak equivalence if for all objects $c$ the map
    $\alpha_c: F(c) \to G(c)$ is a weak equivalence in $\D^\M$. 
   \end{enumerate}
  \end{theone}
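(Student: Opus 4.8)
The plan is to realize $\Fun(\C,\D^\M)^{proj}$ as a cofibrantly generated model structure obtained by transferring the generators of $\D^\M$ along the family of evaluation functors, and then to invoke the recognition theorem for cofibrantly generated model categories \cite[Theorem 11.3.1]{hirschhorn2003modelcategories}. Since $\D^\M$ is combinatorial it is in particular cofibrantly generated, so I would fix a set $I$ of generating cofibrations and a set $J$ of generating trivial cofibrations. For each object $c$ of $\C$ the evaluation functor $\mathrm{ev}_c \colon \Fun(\C,\D) \to \D$, $F \mapsto F(c)$, has a left adjoint $F_c$ given by $F_c(d)(c') = \coprod_{\Hom_\C(c,c')} d$. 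I would then set $I_{proj} = \{ F_c(i) : c \in \mathrm{Ob}(\C),\ i \in I \}$ and $J_{proj} = \{ F_c(j) : c \in \mathrm{Ob}(\C),\ j \in J \}$, and declare $W$ to be the class of pointwise weak equivalences.

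First I would record the two adjunction computations that make everything work: a map $p \colon F \to G$ has the right lifting property with respect to $F_c(i)$ if and only if $p_c \colon F(c) \to G(c)$ has the right lifting property with respect to $i$ in $\D^\M$. Consequently $p$ is $I_{proj}\text{-injective}$ exactly when every $p_c$ is a trivial fibration, i.e. $p$ is a pointwise trivial fibration, and $p$ is $J_{proj}\text{-injective}$ exactly when $p$ is a pointwise fibration. Because $\D^\M$ is combinatorial and $\C$ is small, $\Fun(\C,\D)$ is locally presentable, hence complete and cocomplete, and both $I_{proj}$ and $J_{proj}$ permit the small object argument. The class $W$ inherits the two-out-of-three property and closure under retracts pointwise from $\D^\M$.

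The core of the argument is checking the remaining hypotheses of the recognition theorem. Since $F_c$ is a left adjoint it preserves cofibrations, so $J_{proj} \subseteq I_{proj}\text{-cof}$ and hence $J_{proj}\text{-cell} \subseteq I_{proj}\text{-cof}$. For the acyclicity condition $J_{proj}\text{-cell} \subseteq W$ I would use that colimits in $\Fun(\C,\D)$ are computed pointwise: evaluated at any $c'$, a relative $J_{proj}$-cell complex becomes a transfinite composite of pushouts of coproducts of maps in $J$, which is a trivial cofibration in $\D^\M$ and in particular a weak equivalence. The injectivity comparison $I_{proj}\text{-inj} = W \cap J_{proj}\text{-inj}$ then follows from the two computations above together with the equality (trivial fibration) $=$ (fibration) $\cap$ (weak equivalence) in $\D^\M$. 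With these in hand the recognition theorem produces the model structure, with cofibrations the $I_{proj}$-cofibrations and with the stated fibrations and weak equivalences; uniqueness is automatic, since prescribing the fibrations and weak equivalences already determines the cofibrations as the maps with the left lifting property against the trivial fibrations.

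It remains to upgrade this to the full claim. Left properness follows because projective cofibrations are in particular pointwise cofibrations while pushouts and weak equivalences are computed pointwise, so the property is inherited from $\D^\M$. The simplicial structure is defined pointwise from that of $\D^\M$, and the pushout-product (SM7) axiom reduces to the pointwise one, since the tensor is pointwise and the generating projective cofibrations are built by applying $F_c$ to $I$. Finally, combinatoriality is just local presentability together with the cofibrant generation already established. The one genuinely delicate point is the acyclicity step $J_{proj}\text{-cell} \subseteq W$: everything else is a formal consequence of the adjunctions and of computing limits and colimits pointwise, whereas this step is exactly where one must use that the trivial cofibrations of $\D^\M$ are stable under coproducts, cobase change, and transfinite composition.
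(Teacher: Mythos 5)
Your proof is correct, and it is essentially the argument behind the result the paper cites: the paper gives no proof of its own here, deferring to \cite[Theorem 11.6.1, Theorem 11.7.3]{hirschhorn2003modelcategories}, whose proof is exactly your transfer along the evaluation/free-diagram adjunctions $F_c \dashv \mathrm{ev}_c$ followed by the recognition theorem, with the simplicial and properness claims handled pointwise just as you do. Your identification of the acyclicity step $J_{proj}\text{-cell} \subseteq W$ as the only non-formal point, verified via pointwise computation of colimits, matches the cited treatment.
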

      
  Similar to the Reedy model structure the projective model structure is also invariant.
  
  \begin{lemone} \label{lemma:projective model invariant}
   \cite[Theorem 11.6.5]{hirschhorn2003modelcategories}
     Let 
   \begin{center}
    \adjun{\D^\M}{\E^\N}{F}{G}
   \end{center}
    be a Quillen equivalence between combinatorial simplicial left proper model categories and let $\C$ be a small category. 
    Then the induced adjunction 
   \begin{center}
    \adjun{\Fun(\C,\D^\M)^{proj}}{\Fun(\C,\E^\N)^{proj}}{F_*}{G_*}
   \end{center}
   is a Quillen equivalence as well.
  \end{lemone}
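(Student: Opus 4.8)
The plan is to first verify that $(F_*, G_*)$ is a Quillen adjunction and then to upgrade it to a Quillen equivalence by reducing every condition to the objectwise situation, where the hypothesis that $(F, G)$ is a Quillen equivalence can be applied directly.

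First I would observe that the induced functors are given by postcomposition, so $(F_* X)(c) = F(X(c))$ and $(G_* Y)(c) = G(Y(c))$, and that they form an adjunction because $(F, G)$ does. To see this is a Quillen adjunction, I would check that $G_*$ is right Quillen. By \cref{the:projective model structure} the projective fibrations and projective trivial fibrations in $\Fun(\C, \E^\N)^{proj}$ are detected objectwise, while $G$ preserves fibrations and trivial fibrations since it is right Quillen; hence if $\alpha$ is a projective (trivial) fibration then each $(G_* \alpha)_c = G(\alpha_c)$ is a (trivial) fibration in $\D^\M$, so $G_* \alpha$ is again a projective (trivial) fibration.

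Next, for the Quillen equivalence, I would invoke the standard criterion: $(F_*, G_*)$ is a Quillen equivalence precisely when, for every projectively cofibrant $X$ and projectively fibrant $Y$, a map $f : F_* X \to Y$ is a weak equivalence exactly when its adjunct $\tilde f : X \to G_* Y$ is. Since projective weak equivalences are objectwise (again \cref{the:projective model structure}), the map $f$ is a weak equivalence iff each $f_c : F(X(c)) \to Y(c)$ is one, and likewise $\tilde f$ iff each $\tilde f_c : X(c) \to G(Y(c))$ is one. Because $Y$ projectively fibrant forces $Y(c)$ fibrant for every $c$, and (the key point) $X$ projectively cofibrant forces $X(c)$ cofibrant for every $c$, the equivalence between ``$f_c$ is a weak equivalence'' and ``$\tilde f_c$ is a weak equivalence'' is exactly the Quillen equivalence condition for $(F, G)$ evaluated at the object $c$. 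Assembling these objectwise statements yields the claim.

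The main obstacle, and the only step that is not a formal consequence of the objectwise definitions, is verifying that a projectively cofibrant functor is objectwise cofibrant. For this I would argue at the level of generating cofibrations: the generating projective cofibrations are the maps $c_!(i)$ for $i$ a generating cofibration of $\D^\M$, where $c_!$ is left adjoint to evaluation at $c$, and $c_!(A)$ evaluated at $d$ is the coproduct $\coprod_{\Hom_\C(c,d)} A$. Since coproducts, pushouts, transfinite compositions, and retracts are all computed objectwise in the functor category and preserve objectwise cofibrations, a transfinite composition of pushouts of the $c_!(i)$ starting from the (objectwise initial) empty functor is objectwise a cofibration; hence every projective cofibration, and in particular every projectively cofibrant object, is objectwise cofibrant. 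This is where the combinatorial hypothesis on $\D^\M$ is genuinely used, as it guarantees the generating set and the small-object argument producing this description.
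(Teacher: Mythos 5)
Your proposal is correct. The paper gives no proof of its own here---it simply cites \cite[Theorem 11.6.5]{hirschhorn2003modelcategories}---and your argument is essentially the standard proof of that cited result: establish the Quillen adjunction from the objectwise description of projective fibrations and weak equivalences, prove the key lemma that projective cofibrations (being retracts of cell complexes built from the maps $c_!(i)$, which evaluate to coproducts of cofibrations) are objectwise cofibrations, and then verify the cofibrant/fibrant adjunct criterion for Quillen equivalences objectwise using that $(F,G)$ is a Quillen equivalence.
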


 \subsection{Constructing New Model Structures} 
 In this subsection we review several important (and technical) results that help us construct new model structures.
 
 One very common technique is known as Bousfield localization.
 
 \begin{theone} \label{the:bousfield localization}
  Let $\C^\M$ be a category with a left proper, combinatorial, simplicial model structure $\M$.
  Moreover, let $\mathcal{L}$ be a set of cofibrations in $\C^\M$. 
  There exists a unique cofibrantly generated, left proper, simplicial model category structure on $\C$, 
  called the $\L$-localized model structure and denoted $\C^{\M_\L}$ with the following properties:
  \begin{enumerate}
   \item A morphism $Y \to Z$ is a cofibration in $\C^{\M_\L}$ if it is a cofibration in $\C^\M$.
   \item An object $W$ is fibrant (also called $\L$-local) if it is fibrant in $\C^\M$ and
   for every morphism $f: A \to B$ in $\L$, the induced map  
   $$\Map_\C(B, W) \to \Map_\C(A, W)$$
   is a Kan equivalence.
   \item A map $Y \to Z$ is a weak equivalence in $\M_\L$ if for every $\L$-local object $W$ the induced map 
   $$\Map_\C(Z, W) \to \Map_\C(Y, W)$$
   is a Kan equivalence.
   \end{enumerate}
  \end{theone}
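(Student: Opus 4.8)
The statement is the standard existence theorem for left Bousfield localizations, so the plan is to realize $\C^{\M_\L}$ through a general recognition theorem for combinatorial model categories. First I would fix the candidate classes: declare the cofibrations of $\C^{\M_\L}$ to be exactly the $\M$-cofibrations, and declare its weak equivalences to be the class $W_\L$ of $\L$-local equivalences, i.e.\ those $f \colon Y \to Z$ for which $\Map_\C(Z,W) \to \Map_\C(Y,W)$ is a Kan equivalence for every $\L$-local object $W$ as in (2). Because $\C^\M$ is simplicial, cofibrant replacement together with Ken Brown's lemma shows that every $\M$-weak equivalence already lies in $W_\L$, and $W_\L$ inherits the two-out-of-three property and closure under retracts directly from Kan equivalences. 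These two choices make conditions (1) and (3) hold by fiat, so the entire content is that they assemble into a model structure.

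To produce that model structure I would apply Smith's recognition theorem with generating cofibrations $I$ taken to be those of $\M$. The conditions to verify are: (i) every $I$-injective map, equivalently every $\M$-trivial fibration, lies in $W_\L$, which is immediate since such maps are $\M$-weak equivalences; (ii) $W_\L$ is an accessible, accessibly embedded full subcategory of the arrow category $\Fun([1],\C)$; and (iii) the class of $\M$-cofibrations that are $\L$-local equivalences is closed under pushout and transfinite composition. Granting these, Smith's theorem yields a cofibrantly generated model structure with the prescribed cofibrations and weak equivalences.

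The hard part is conditions (ii) and (iii). Accessibility in (ii) is where I use that $\L$ is a genuine \emph{set} and that $\C$ is locally presentable: one builds a functorial $\L$-localization $L \colon \C \to \C$ by the small object argument on the simplicial horns generated by $\L$, shows that the $\L$-local objects are exactly those on which the localization unit is an equivalence, and identifies $W_\L$ with the maps inverted by $L$; accessibility of $W_\L$ then follows from accessibility of $L$. Condition (iii) is precisely where \emph{left properness} of $\M$ is indispensable: it guarantees that $\L$-local equivalences are stable under cobase change along cofibrations, which upgrades to closure of the $\L$-local trivial cofibrations under pushout, while closure under transfinite composition is a more routine filtered-colimit argument. (Alternatively one could bypass Smith and follow Hirschhorn, constructing the generating trivial cofibrations $J_\L$ explicitly as the $\M$-generating trivial cofibrations together with the pushout-product horns of the maps in $\L$ against the inclusions $\partial \Delta[n] \to \Delta[n]$, and then manufacturing both factorizations by the small object argument; there the delicate point is recognizing a $J_\L$-injective map as an $\M$-fibration that is simultaneously an $\L$-local equivalence.)

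It remains to pin down the residual structure. Left properness passes to the localization for free, since the cofibrations and the relevant pushouts are unchanged and $W_\L$ contains all $\M$-weak equivalences; simpliciality follows by checking the pushout-product axiom for the new, smaller class of fibrations, which reduces to the pushout-product axiom of $\M$ combined with the mapping-space condition defining $\L$-local objects. Property (2) is obtained by characterizing the $\M_\L$-fibrant objects as the $\M$-fibrant $W$ with the right lifting property against $\L$-local trivial cofibrations and unwinding this lifting condition against the generators to recover exactly the stated mapping-space equivalences. Uniqueness is then automatic, as any model structure is determined by its cofibrations and weak equivalences, both fixed above.
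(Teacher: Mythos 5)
Your outline is correct, but note that the paper itself contains no proof of this statement: immediately after it, the author writes that existence "is proven using the theory of left Bousfield localizations" and simply refers to \cite[Theorem 4.1.1]{hirschhorn2003modelcategories} for a detailed proof and to \cite[Proposition 9.1]{rezk2001css} for a summary of its steps. So the real comparison is between your argument and Hirschhorn's, and these are genuinely different routes. Hirschhorn's theorem concerns left proper \emph{cellular} model categories: he forms the augmented set of $\L$-horns, manufactures both factorizations by the small object argument, and uses cellularity (compactness of cells) to recognize which maps are fibrations and trivial fibrations of the localized structure --- this is exactly the alternative you sketch in your parenthetical remark. Your main line instead runs through Smith's recognition theorem for combinatorial model categories: fix the cofibrations and the class $W_\L$, then verify the injectivity, accessibility, and pushout/transfinite-composition conditions, the last via left properness; this is the approach of \cite[A.3.7.3]{lurie2009htt}. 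Your route has a concrete advantage here: the hypotheses of the statement are \emph{combinatorial}, not cellular, so Hirschhorn's theorem as cited does not literally match them (this is harmless in the paper, since every category to which the theorem is applied is a presheaf-type category with monomorphism cofibrations, hence both cellular and combinatorial, but your argument fits the stated hypotheses on the nose), and the price you pay is the accessibility machinery in your step (ii), which you correctly isolate as the technical core. One caveat, which your write-up shares with the statement itself: the underived mapping spaces $\Map_\C(-,W)$ are homotopy invariant only on cofibrant objects, so the claim that $\M$-weak equivalences lie in $W_\L$, and the powering argument behind your simpliciality check, require either that all objects be cofibrant or the use of derived mapping spaces (Hirschhorn's homotopy function complexes); your appeal to cofibrant replacement and Ken Brown's lemma is implicitly making that adjustment, and it should be made explicit in a full write-up.
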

 
 The existence of the $\L$-localized model structure is proven using the theory of left Bousfield localizations. 
 
 For a careful and detailed proof of the existence of left Bousfield localizations see \cite[Theorem 4.1.1]{hirschhorn2003modelcategories}
 For a nice summary of this proof that goes over the main steps see \cite[Proposition 9.1]{rezk2001css}.
 
 Using a variation of Bousfield localization and ideas from \cite{quillen1967modelcats}  get right induced model structures.
 
 \begin{theone} \label{the:right induced}
  Let 
  \begin{center}
   \adjun{\D^\M}{\C}{F}{G}
  \end{center}
  be an adjunction, such that the following conditions hold:
  \begin{enumerate}
   \item $\C$ is a simplicial, locally presentable category.
   \item $\D^\M$ is a simplicial, left proper, combinatorial model category
   \item $G$ preserves filtered colimits
   \item $\C$ has a functorial fibrant replacement, meaning there is a functor $R: \C \to \C$ and natural transformation $X \to RX$, such that $GX \to GRX$ is a fibrant replacement in $\D$.
  \end{enumerate}
 Then there exists a unique simplicial, combinatorial, left proper model structure on $\C^\N$, called the right induced model structure,
 and characterized by the fact that a 
 map $f: c \to d$ in $\C^\N$ is a fibration (weak equivalence) if and only if $Gf$ is a fibration (weak equivalence) in $\D^\M$.
 \end{theone}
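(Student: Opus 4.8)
The plan is to exhibit $\C^\N$ as the model structure right-induced (right-transferred) along the right adjoint $G$, following the recognition strategy going back to Quillen and Kan: take the weak equivalences and fibrations to be the maps $f$ for which $Gf$ is a weak equivalence, respectively a fibration, in $\D^\M$, take the cofibrations to be the maps with the left lifting property against the acyclic fibrations, and then check the model axioms. As with the existence proof for a Bousfield localization (\cref{the:bousfield localization}), the work splits into a formal part, handled by the small object argument and the adjunction, and a single genuinely homotopical input, the acyclicity condition, which I isolate as the main obstacle.

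For the formal part, fix a set $I$ of generating cofibrations and a set $J$ of generating acyclic cofibrations of the combinatorial model category $\D^\M$, and propose $F(I)$ and $F(J)$ as generators in $\C$. Since $\C$ is locally presentable, every object is small, so $F(I)$ and $F(J)$ permit the small object argument and yield two functorial factorisations. By the adjunction $F \dashv G$, a map $f$ of $\C$ has the right lifting property against $F(I)$ (respectively $F(J)$) if and only if $Gf$ has the right lifting property against $I$ (respectively $J$), i.e.\ if and only if $Gf$ is an acyclic fibration (respectively a fibration) in $\D^\M$. Hence the $F(I)$-injective maps are exactly the intended acyclic fibrations and the $F(J)$-injective maps are exactly the intended fibrations, while the two-out-of-three and retract closure of the transferred weak equivalences and fibrations are inherited from $\D^\M$ through $G$. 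This determines all three classes, and therefore gives uniqueness.

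The crux is the acyclicity condition: every relative $F(J)$-cell complex must be shown to be a weak equivalence, since only then do the two weak factorisation systems assemble into a model structure. Because $G$ preserves filtered colimits by condition (3), and weak equivalences in the combinatorial category $\D^\M$ are closed under filtered colimits, it suffices to treat a single pushout $i \colon A \to B$ of a map $F(g)$ with $g$ an acyclic cofibration of $\D^\M$; the transfinite composites then follow. Such an $i$ has the left lifting property against all transferred fibrations, again by the adjunction. The obstruction is that $G$ does not preserve this pushout, so I cannot compute $Gi$ directly; instead I run Quillen's path object argument, whose two inputs are supplied by the standing assumptions. Condition (4) provides a functorial fibrant replacement $X \to RX$, a weak equivalence onto an object with $GRX$ fibrant, and the simplicial enrichment provides path objects: for a fibrant $X$ the cotensor gives $X \to X^{\Delta[1]} \to X^{\partial \Delta[1]} = X \times X$, and since the adjunction is simplicial $G$ preserves cotensors, carrying this to the path object $(GX)^{\Delta[1]}$ of the fibrant object $GX$ in the simplicial model category $\D^\M$; thus $X \to X^{\Delta[1]}$ is a weak equivalence and $X^{\Delta[1]} \to X \times X$ a fibration in $\C^\N$. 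Lifting the fibrant replacement through $i$ and comparing the two resulting maps into such a path object by a right homotopy shows, via two-out-of-three, that $i$ is a weak equivalence.

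It remains to record the adjectives. Combinatoriality is immediate, since $\C$ is locally presentable and the structure is cofibrantly generated by the sets $F(I)$ and $F(J)$. For the simplicial axiom I verify the pullback-cotensor form: for a transferred fibration $p \colon X \to Y$ and a cofibration $K \to L$ of simplicial sets, applying $G$ to $X^L \to X^K \times_{Y^K} Y^L$ reproduces, since $G$ preserves limits and cotensors, the corresponding map for $Gp$, which is a (trivial) fibration by the simplicial axiom in $\D^\M$; hence the original map is a (trivial) fibration in $\C^\N$ by the definition of the transferred classes. Left properness is the one property that does not pass formally through $G$, since $G$ fails to preserve pushouts; besides the acyclicity condition it is the most delicate point, and I would treat it separately, exploiting left properness of $\D^\M$ together with the fact that $F$, as a left adjoint, preserves the pushouts that present cofibrations.
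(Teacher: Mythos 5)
The core of your proposal coincides with what the paper actually does: its entire proof is a citation to the transfer results of \cite{goerssschemmerhorn2007modelcats}, and those results are proven exactly as you outline, namely by the small object argument applied to $F(I)$ and $F(J)$ combined with Quillen's path object argument, where condition (4) supplies the fibrant replacement and the simplicial cotensor $(-)^{\Delta[1]}$ supplies path objects for fibrant objects; your verification of the simplicial axiom via the pullback--cotensor criterion is also the standard one (note that both you and the paper implicitly read ``adjunction'' as ``simplicial adjunction'', since otherwise $G$ need not preserve cotensors and the simplicial axiom could not be checked). However, two points are genuine problems. First, your reduction to a single pushout of a map $F(g)$ rests on the claim that weak equivalences in a combinatorial model category are closed under filtered colimits; this is false in general (they are only closed under $\kappa$-filtered colimits for some sufficiently large regular cardinal $\kappa$, and left Bousfield localizations provide counterexamples at $\kappa = \omega$). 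Fortunately the reduction is also unnecessary: an arbitrary relative $F(J)$-cell complex already has the left lifting property against all transferred fibrations, since pushouts and transfinite composites preserve lifting properties, and your path object argument applies to it verbatim; so delete the reduction and run the argument once on the whole cell complex. (Also, the conclusion of that argument needs two-out-of-six, i.e.\ the saturation of weak equivalences, rather than two-out-of-three: from $Gh \circ Gi$ and $GR(i) \circ Gh$ being weak equivalences, two-out-of-three alone does not yield that $Gi$ is one.)

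Second, left properness is part of the statement and remains unproven in your proposal, and the route you gesture at cannot work. Left properness requires that the pushout in $\C$ of a transferred weak equivalence along a cofibration again be a weak equivalence, i.e.\ that $G$ applied to that pushout map be a weak equivalence in $\D^\M$; since $G$ need not preserve pushouts, the fact that $F$ (going the other way) preserves pushouts is of no help whatsoever. This point is not covered by the path object machinery at all. In the paper it is invisible only because the intended application (\cref{prop:transferred model structure on marked simplicial spaces}) has special features: there $G = \Forget$ admits a further right adjoint $(-)^\sharp$ and hence preserves pushouts, and $GF = \id$, so $G$ carries (generating) cofibrations to cofibrations, which lets left properness of $\ss^{Ree_{\mathcal{L}}}$ transfer directly. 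To close your proof at the stated level of generality you would need to add such a hypothesis (for instance, that $G$ preserves pushouts and sends $F(I)$ to cofibrations), or restrict the properness claim to the situations where it can be checked by hand.
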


 The proof is a combination of results \cite[Theorem 3.6, Theorem 3.8, Corollary 4.14]{goerssschemmerhorn2007modelcats}.
 
 There is also another version of a Bousfield localization we need.
 
 \begin{theone} \label{the:localizing adjunctions}
  \cite[A.3.7.10]{lurie2009htt}
  Let $\C^\M$ and $\D^\N$ be left proper combinatorial simplicial
  model categories such that cofibrations on in $\C^\M$ and $\D^\N$ are monomorphisms 
  and suppose we are given a simplicial Quillen adjunction
 \begin{center}
  \adjun{\C^\M}{\D^\N}{F}{G}
 \end{center}
 Then 
 \begin{enumerate}
  \item There exists a new left proper combinatorial simplicial model structure $\C^{\M'}$
  on the category $\C$ with the following properties: 
  \begin{itemize}
   \item A morphism $f$ in $\C^{\M'}$
   is a cofibration if and only if it is a cofibration in $\C^{\M}$.
   \item A morphism $f$ in $\C^{\M'}$
   is a weak equivalence if and only if $Ff$ is a weak equivalence in $\D^\N$.
   \item A morphism in $f$ in $\C^{\M'}$
   is a fibration if and only if it has the right
   lifting property with respect to trivial cofibrations.
  \end{itemize}
  \item The functors F and G determine a new simplicial Quillen adjunction 
  \begin{center}
  \adjun{\C^{\M'}}{\D^\N}{F}{G}
 \end{center}
  \item If the the derived right adjoint is fully faithful then the Quillen adjunction is a Quillen equivalence.
 \end{enumerate}
\end{theone}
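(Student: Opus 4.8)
The plan is to realize the desired model structure $\C^{\M'}$ as the localization of $\C^\M$ whose weak equivalences are the \emph{$F$-equivalences} $W_F = \{ f : Ff \text{ is an } \N\text{-weak equivalence}\}$, keeping the cofibrations of $\C^\M$ unchanged. The single observation that makes everything run smoothly is the hypothesis that cofibrations in $\C^\M$ are exactly the monomorphisms: since $\varnothing \to X$ is a monomorphism for every $X$, every object of $\C^\M$ is cofibrant, and hence by Ken Brown's lemma the left Quillen functor $F$ preserves \emph{all} weak equivalences, not merely those between cofibrant objects. In particular $W_\M \subseteq W_F$, so that $W_F$ is a genuine enlargement of the weak equivalences and the construction is a left Bousfield-type localization in the spirit of \cref{the:bousfield localization}.

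For part (1) I would invoke Jeff Smith's recognition theorem for combinatorial model categories (the engine underlying \cref{the:bousfield localization}), taking as generating cofibrations a set $I$ generating the cofibrations of $\C^\M$ and as weak equivalences the class $W_F$. The conditions to check are: (i) $W_F$ satisfies two-out-of-three and is closed under retracts, which is immediate from functoriality of $F$ and the corresponding properties of $W_\N$; (ii) $W_F$ is an accessible subcategory of the arrow category $\C^{[1]}$, which follows because $W_\N \subseteq \D^{[1]}$ is accessible (the weak equivalences of a combinatorial model category always form an accessible subcategory) and $W_F = (F^{[1]})^{-1}(W_\N)$ is its preimage under the colimit-preserving, hence accessible, functor $F^{[1]}$; (iii) every map with the right lifting property against $I$ is a trivial fibration in $\C^\M$, hence an $\M$-weak equivalence, hence lies in $W_F$ by the previous paragraph; and (iv) the intersection $W_F \cap \mathrm{cof}(I)$ is closed under pushouts and transfinite composition, where I use that $F$ preserves cofibrations and these colimits and sends such a map to a cofibration that is an $\N$-weak equivalence, i.e. an $\N$-trivial cofibration, a class which is itself closed under pushouts and transfinite composition. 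Smith's theorem then produces the model structure with cofibrations $\mathrm{cof}(I)$, weak equivalences $W_F$, and fibrations those maps with the right lifting property against trivial cofibrations, exactly as stated. Left properness I would deduce by applying $F$ to a pushout of an $F$-equivalence along a cofibration and using that $F$ preserves pushouts and cofibrations together with left properness of $\N$; that the structure remains simplicial follows because the cofibrations are unchanged and the construction is an enriched left Bousfield localization.

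Part (2) is then formal: $F$ preserves cofibrations (they are unchanged) and sends a trivial cofibration of $\C^{\M'}$ — a cofibration lying in $W_F$ — to an $\N$-trivial cofibration, so $F$ is left Quillen for the new structure and $(F,G)$ is a simplicial Quillen adjunction. For part (3), note that $\mathbb{R}G$ is fully faithful precisely when the derived counit $\mathbb{L}F\,\mathbb{R}G\, Y \to Y$ is an $\N$-weak equivalence for every fibrant $Y$. It then remains to show the derived unit $u_X \colon X \to \mathbb{R}G\,\mathbb{L}F X$ is an $\M'$-weak equivalence for every (cofibrant) $X$; writing $r\colon FX \to RFX$ for a fibrant replacement in $\N$, the triangle identity and naturality give $\epsilon_{RFX} \circ F u_X = r$, and since both $r$ and the derived counit $\epsilon_{RFX}$ are $\N$-weak equivalences, two-out-of-three forces $Fu_X$ to be one as well, i.e. $u_X \in W_F$. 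Thus both derived unit and derived counit are weak equivalences and the adjunction is a Quillen equivalence.

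I expect the main obstacle to be part (1), and specifically the verification that $W_F$ is an accessible subcategory of $\C^{[1]}$: this is what licenses the use of the combinatorial recognition theorem, and it is the only point where the combinatorial hypotheses (rather than mere left properness or simpliciality) are genuinely used. Everything downstream — left properness, the Quillen adjunction, and the Quillen-equivalence criterion — is comparatively formal once the localized model structure is in hand, the key conceptual input throughout being that all objects are cofibrant and hence $F$ is a homotopy functor.
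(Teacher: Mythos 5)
Your proof rests on a misreading of the hypothesis, and the misreading is fatal rather than cosmetic. The statement assumes that cofibrations in $\C^\M$ \emph{are} monomorphisms --- i.e., every cofibration is a monomorphism --- not that the cofibrations are \emph{exactly} the monomorphisms. From this inclusion you cannot conclude that $\varnothing \to X$ is a cofibration, hence you cannot conclude that every object of $\C^\M$ is cofibrant. This is not a hypothetical worry: in the paper's own application of this theorem (the construction of the marked CSS model structure in \cref{the:absolute Cartesian model structure on marked simplicial spaces}), the theorem is applied to $\C^\M = (\ss^+)^{un^+CSS}$, a model category in which the paper explicitly exhibits a non-cofibrant object, namely $F(1)^\sharp$ (see the discussion following \cref{cor:marked CSS}); there the cofibrations form a proper subclass of the monomorphisms. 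So the ``single observation that makes everything run smoothly'' is false precisely in the situation for which the theorem is needed. (The paper itself offers no proof --- it cites \cite[A.3.7.10]{lurie2009htt} --- but its statement and its later use demand the weaker hypothesis.)

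The gap then propagates to the one step that carries the real content. Without all objects cofibrant, Ken Brown's lemma only gives that $F$ preserves weak equivalences \emph{between cofibrant objects}, so your inclusion $W_\M \subseteq W_F$ --- which you invoke to verify your condition (iii), that every map with the right lifting property against the generating cofibrations (i.e., every $\M$-trivial fibration) lies in $W_F$ --- is unjustified. This inclusion cannot be sidestepped: since $\M'$ and $\M$ have the same cofibrations, they have the same trivial fibrations (both classes are characterized by the right lifting property against cofibrations), so the statement itself forces $\M$-trivial fibrations, and hence all of $W_\M$, into $W_F$. Establishing that $F$ carries trivial fibrations (equivalently, all $\M$-weak equivalences) to $\N$-weak equivalences \emph{without} cofibrancy of all objects is exactly the crux of Lurie's argument, and it is absent from your proposal. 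The same issue infects part (3), where you identify the plain unit and counit with their derived versions; that identification again requires the relevant objects ($X$ and $G$ of a fibrant object) to be cofibrant. The remaining ingredients --- accessibility of $W_F$, closure of $W_F \cap \mathrm{cof}$ under pushouts and transfinite composition, left properness via $F$, and part (2) --- are fine as you state them; ironically, the step you single out as the main obstacle (accessibility) is routine, while the step you dispatch ``by the previous paragraph'' is where the theorem actually lives.
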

 
 Finally, we want to observe how a Quillen equivalence of model categories can be transferred to over-categories.
 
 \begin{propone} \label{prop:quillen equiv over cat}
  Let $\C^\M$ and $\D^\N$ be two model categories in which all objects are cofibrant and let
  \begin{center}
   \adjun{\C^\M}{\D^\N}{F}{G}
  \end{center}
  be a Quillen adjunction. Fix an object $C \in \C$. Then the adjunction
    \begin{center}
   \adjun{(\C_{/C})^\M}{(\D_{/FC})^\N}{F}{u^*G}
  \end{center}
  is a Quillen adjunction. Here
  $F(C' \to C) = FC' \to FC$ and $u^*G(D \to FC)= (u_C)^*(GD \to GFC)$ where $u_C: C \to GFC$ is the unit map of the adjunction.
  Moreover, if $(F,G)$ is simplicial (or a Quillen equivalence) then $(F,u^*G)$ is also simplicial (or a Quillen equivalence).
 \end{propone}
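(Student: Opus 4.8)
The plan is to treat the three assertions in turn: that $(F,u^*G)$ is an adjunction, that it is a (simplicial) Quillen adjunction, and that it is a Quillen equivalence whenever $(F,G)$ is.

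First I would establish the adjunction directly from the universal property of the pullback. A morphism $FC'\to D$ in $\D_{/FC}$ is a map lying over $FC$, and under the $(F,G)$-adjunction it corresponds to a map $C'\to GD$; after applying $G$ and using the triangle identity $\epsilon_{FC}\circ F(u_C)=\id$, the condition that the original map lie over $FC$ becomes exactly the condition that $C'\to GD\to GFC$ agree with $C'\to C\xrightarrow{u_C}GFC$, i.e. that $C'\to GD$ factor through the pullback $(u_C)^*GD=C\times_{GFC}GD$ over $C$. This yields the desired bijection, natural in both variables, whose counit is the composite $F\bigl((u_C)^*GD\bigr)\xrightarrow{Fq}FGD\xrightarrow{\epsilon_D}D$, where $q$ is the projection to $GD$. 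The simplicial statement then needs no new work: the slice of a simplicial model category is simplicial, and the enrichments of $F$ and $u^*G$ are inherited from those of $F$ and $G$ together with the (simplicial) universal property of the pullback.

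Next I would verify the Quillen adjunction. In both slice model structures a map is a cofibration (resp. trivial cofibration) precisely when its underlying map is one in $\C^\M$ (resp. $\D^\N$), and $F\colon\C_{/C}\to\D_{/FC}$ is simply the application of $F$ on underlying objects. Since $F\colon\C\to\D$ is left Quillen it preserves cofibrations and trivial cofibrations, hence so does $F$ on slices, and therefore $(F,u^*G)$ is a Quillen adjunction. For the Quillen equivalence I would exploit the hypothesis that all objects are cofibrant: by Ken Brown's lemma $F$ then preserves all weak equivalences, both on $\C$ and (as these are detected on underlying objects) on $\C_{/C}$; and since $(F,G)$ is a Quillen equivalence, $LF$ is an equivalence of homotopy categories, hence conservative. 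With $F$ preserving all weak equivalences, $LF$ is computed without cofibrant replacement, and a conservative left adjoint whose derived counit is invertible is an equivalence; it therefore suffices to show that the derived counit of $(F,u^*G)$ is a weak equivalence at every fibrant object. As every object is cofibrant, the derived counit at a fibrant $\beta=(D\xrightarrow{p}FC)$ is the counit itself, namely $\epsilon_D\circ Fq\colon F(C\times_{GFC}GD)\to D$. Thus the whole statement reduces to showing this map is a weak equivalence whenever $p$ is a fibration.

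This last step is what I expect to be the main obstacle. Since $G$ is right Quillen, $Gp\colon GD\to GFC$ is a fibration, so $C\times_{GFC}GD$ is a model for the homotopy pullback of the cospan $C\xrightarrow{u_C}GFC\xleftarrow{Gp}GD$. Because $u_C$ is by definition the adjunct of $\id_{FC}$, the triangle identity $\epsilon_{FC}\circ F(u_C)=\id$ and the naturality of $\epsilon$ produce a map of cospans from the $F$-image of the above to $FC\xrightarrow{\id}FC\xleftarrow{p}D$, whose homotopy pullback along the fibration $p$ is just $D$; under these identifications the counit $\epsilon_D\circ Fq$ is exactly the comparison map between the two homotopy pullbacks. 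The content to be supplied is that this comparison is a weak equivalence, i.e. that $F$ carries the homotopy pullback on the $\C$-side to the homotopy pullback on the $\D$-side. This is the crux, and it genuinely uses that $(F,G)$ is a Quillen \emph{equivalence}: the derived functors are inverse equivalences and so preserve homotopy pullbacks (equivalently, $F$ induces an equivalence of underlying homotopy theories under which slicing is preserved), while one also needs that the strict pullback of the fibration $Gp$ already computes the homotopy pullback, which holds in the (right proper) model structures in play. Granting this, the derived counit is a weak equivalence and the criterion above delivers the Quillen equivalence.
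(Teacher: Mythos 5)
The paper states this proposition without any proof, so there is nothing to compare your argument against; judged on its own terms, the first two thirds of your proposal are correct and well organized. The adjunction bijection via the pullback, the observation that cofibrations, fibrations and weak equivalences in both slices are detected on underlying maps (so that $F$ is left Quillen), the inheritance of the simplicial enrichment, the use of Ken Brown's lemma plus ``all objects cofibrant'' to see that $F$ preserves all weak equivalences, the conservativity of the sliced derived functor, and the resulting reduction of the entire statement to the claim that for every fibration $p\colon D\to FC$ the counit $F(C\times_{GFC}GD)\to D$ is a weak equivalence --- all of this is sound.

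The gap is in the crux step, and it is twofold. First, your claim that $C\times_{GFC}GD$ computes the homotopy pullback of $C\xrightarrow{u_C}GFC\xleftarrow{Gp}GD$ because $Gp$ is a fibration requires either right properness of $\C^\M$ or fibrancy of the objects involved; neither is available. ``All objects cofibrant'' gives left properness, not right properness, and right properness is exactly what fails in the model structures to which the paper applies this proposition (\cref{cor:marked quillen equiv over}, \cref{the:cart model structure marked simp spaces}): the Joyal model structure is famously not right proper, and the CSS, marked Joyal and marked CSS structures are localizations for which right properness is likewise unavailable. So your parenthetical ``which holds in the (right proper) model structures in play'' silently strengthens the hypotheses beyond both the statement and its intended use. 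Second, and more seriously, even granting that both strict pullbacks are homotopy pullbacks and that $LF$ preserves homotopy pullbacks, your comparison does not identify them: the map of cospans from $F\bigl(C\to GFC\leftarrow GD\bigr)$ to $FC\xrightarrow{\id}FC\xleftarrow{p}D$ has components $\epsilon_{FC}\colon FGFC\to FC$ and $\epsilon_D\colon FGD\to D$, which are counits at the objects $FC$ and $D$. These objects are \emph{not} fibrant in $\D^\N$ ($D$ is only a fibration over the possibly non-fibrant $FC$), so these are not derived counits and there is no reason for them to be weak equivalences; hence the induced map on homotopy pullbacks need not be one either, and ``$LF$ preserves homotopy pullbacks'' cannot close the argument. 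The underlying difficulty, which any correct proof must confront, is that $G$ is homotopically meaningful only on fibrant objects of $\D^\N$, while the statement forces you to apply $G$ to $D$ and $FC$; your appeal to ``$F$ induces an equivalence of underlying homotopy theories under which slicing is preserved'' is essentially a restatement of the proposition over non-fibrant bases, so invoking it here is circular.
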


 \subsection{Model Structures for $(\infty,1)$-Categories and their Equivalence}
 There are now many different approaches to the theory of $(\infty,1)$-categories. 
 Here we focus on two very popular models. Quasi-categories and complete Segal spaces. 
 
 Quasi-categories were first introduced by Boardman and Vogt in their study of homotopy coherent algebraic structures 
 \cite{boardmanvogt1973qcats}. It was Joyal who realized that quasi-categories can be used to do concrete category theory
 \cite{joyal2008notes,joyal2008theory} and then later Lurie, who developed all of category theory in the 
 context of quasi-categories \cite{lurie2009htt,lurie2017ha}.
 Here we only review the definition and the existence of a model structure.
 
 \begin{defone} 
 \cite[Definition 1.1.2.4]{lurie2009htt}
  A {\it quasi-category} is a simplicial set $S$ that has right lifting property with respect to diagrams 
  \begin{center}
   \begin{tikzcd}
    \Lambda[n]_i \arrow[r] \arrow[d] & S \\
    \Delta[n] \arrow[ur, dashed] 
   \end{tikzcd}
  \end{center}
  where $0 < i < n$. 
 \end{defone}
 Quasi-categories have many characteristics of a category. In particular, the set $S_0$ is the set of objects, $S_1$ are the morphisms. Moreover, $S_{hoequiv} \subseteq S_1$ is the subset of $S_1$ consisting of the equivalences in $S_1$.
 Quasi-categories come with a model structure.
 
 \begin{theone} \label{the:joyal model structure}
 \cite[Theorem 2.2.5.1]{lurie2009htt}
  There is a unique, combinatorial, left proper model structure on $\sset$, called the {\it Joyal model structure} and denoted 
  $\sset^{Joy}$, with the following specifications:
  \begin{itemize}
   \item A map $S \to T$ is a cofibration if it is a monomorphism.
   \item An object $S$ is fibrant if it is a quasi-category.
  \end{itemize}
 \end{theone}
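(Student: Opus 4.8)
The plan is to obtain the model structure from a recognition theorem for combinatorial model categories (in the style of J.~Smith), taking the cofibrations to be the monomorphisms. Since $\sset$ is a presheaf category it is locally presentable, and the monomorphisms form the weakly saturated class generated by the boundary inclusions $I = \{\partial\Delta[n] \hookrightarrow \Delta[n] : n \geq 0\}$, so the real work is to produce an accessible class $W$ of weak equivalences compatible with $I$. First I would define $W$ to be the \emph{weak categorical equivalences}: a map $f\colon S \to T$ lies in $W$ if for every quasi-category $Z$ the restriction $Z^T \to Z^S$ of internal mapping objects is an equivalence, where equivalence is detected on homotopy categories by requiring $\tau_1(Z^T) \to \tau_1(Z^S)$ (with $\tau_1$ left adjoint to the nerve) to be an equivalence of categories, equivalently that $f$ induce a bijection on $J[1]$-homotopy classes of maps into every quasi-category. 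From this description $W$ satisfies two-out-of-three and is closed under retracts by inspection, and accessibility follows because both the quasi-category condition and the defining condition are expressed by lifting and bijectivity against a small set of maps.

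Next I would verify the hypotheses of the recognition theorem. The maps with the right lifting property against $I$ are the trivial Kan fibrations; these are simplicial homotopy equivalences and hence lie in $W$, giving $\mathrm{inj}(I) \subseteq W$. The subtler hypothesis is that the trivial cofibrations $\mathrm{cof}(I) \cap W$ form a weakly saturated class, i.e.\ are closed under pushout and transfinite composition. The technical engine is the theory of inner anodyne maps: I would prove that the weakly saturated class generated by the inner horn inclusions $\Lambda[n]_i \hookrightarrow \Delta[n]$ (with $0 < i < n$) is closed under pushout-product with arbitrary monomorphisms. By adjunction this has two consequences: for a quasi-category $Z$ and an inner anodyne map $A \hookrightarrow B$ the restriction $Z^B \to Z^A$ is a trivial Kan fibration, so every inner anodyne map lies in $W$; and $Z^X$ is again a quasi-category for every $X$. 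Together with the mapping-object description of $W$, these facts yield the required closure of the trivial cofibrations, and the recognition theorem then produces a combinatorial model structure whose cofibrations are the monomorphisms and whose weak equivalences are $W$.

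It remains to identify the fibrant objects with the quasi-categories and to check left properness. Since the inner horn inclusions are trivial cofibrations, any fibrant object lifts against them and is therefore a quasi-category; this direction is immediate. The reverse implication — that every quasi-category is fibrant, i.e.\ lifts against \emph{all} trivial cofibrations, not merely the inner anodyne ones — is the main obstacle, since the recognition theorem supplies the class of trivial cofibrations only abstractly. I would handle it by developing the interplay between the interval $J[1]$ and the inner anodyne maps: characterize the categorical fibrations between quasi-categories as the inner fibrations satisfying the lifting condition against the endpoint inclusion $\{0\} \hookrightarrow J[1]$, use $J[1]$-homotopy together with the pushout-product property to show that a trivial cofibration mapped into a quasi-category admits a retraction up to $J[1]$-homotopy, and conclude that quasi-categories lift against the whole saturated class. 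Finally, left properness reduces, via the mapping-object description of $W$, to stability of weak categorical equivalences under pushout along monomorphisms, which follows once $W$ is expressed through equivalences of the maximal Kan complexes inside $Z^{(-)}$ for quasi-categories $Z$. Uniqueness is then automatic, since for a left proper combinatorial structure the cofibrations together with the fibrant objects determine the weak equivalences.
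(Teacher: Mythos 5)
The paper offers no proof of this statement: it is imported verbatim from \cite[Theorem 2.2.5.1]{lurie2009htt}, so the only meaningful comparison is with Lurie's proof (and Joyal's, which Lurie's relies on). Your proposal takes a genuinely different, Joyal-style route: you define the weak equivalences \emph{intrinsically}, as maps $f\colon S\to T$ such that $Z^T\to Z^S$ is an equivalence for every quasi-category $Z$, and feed this class into Smith's recognition theorem. Lurie instead defines a categorical equivalence to be a map whose image under $\mathfrak{C}[-]$ is an equivalence of simplicial categories, and verifies the hypotheses of the recognition theorem using that left adjoint: the class of weak equivalences is \emph{perfect} because $\mathfrak{C}[-]$ preserves filtered colimits and equivalences of simplicial categories form a perfect class. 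Your route (essentially Joyal's, also followed by Dugger--Spivak and Cisinski) buys self-containedness --- no simplicial categories, no Bergner model structure --- at the price of having to prove Joyal's hard lifting theorem, that inner fibrations between quasi-categories lift equivalences along $\{0\}\hookrightarrow J[1]$; you correctly isolate this as the main obstacle. Note also that in Lurie's treatment the identification of the fibrant objects with the quasi-categories is \emph{not} part of Theorem 2.2.5.1 but a separate, later theorem requiring substantial machinery, so your flagging of that step as the hard core is accurate.

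There is, however, one step whose justification as written would fail: the accessibility of $W$. You assert that it holds ``because both the quasi-category condition and the defining condition are expressed by lifting and bijectivity against a small set of maps,'' but your definition of $W$ quantifies over \emph{all} quasi-categories $Z$ --- a proper class --- so no small set of test maps has been exhibited, and Smith's theorem cannot yet be invoked. This is precisely where the known proofs do real work: Lurie obtains accessibility because $W$ is the preimage of a perfect class under the filtered-colimit-preserving functor $\mathfrak{C}[-]$, while Cisinski's approach gets existence from his general theory of localizers on presheaf categories (after which one must still identify the fibrant objects with quasi-categories). You would need one of these devices, or a cardinality argument cutting the class of test objects $Z$ down to a small set, to complete the argument. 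Two smaller remarks: left properness is automatic in any model structure in which every object is cofibrant, so your argument via maximal Kan complexes is unnecessary; and your uniqueness argument --- that the cofibrations together with the fibrant objects determine the model structure --- is correct, and does not even require left properness or combinatoriality.
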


 Another popular model of $(\infty,1)$-categories are complete Segal spaces, which were defined and 
 studied by Charles Rezk \cite{rezk2001css}.

  \begin{defone} \label{def:segal and completeness maps}
   Let $G(n)$ be the sub-simplicial space of $F(n)$ consisting of maps $f: [k] \to [n]$ such that $f(k) - f(0) \leq 1$.
   Define the sets $\Seg$ and $\Comp$ as follows:
   $$\Seg = \{ G(n) \hookrightarrow F(n) : n \geq 2 \}$$
   $$\Comp = \{ F(0) \to E(1) \} $$
  \end{defone}
  
  \begin{notone} \label{not:spine}
   We denote the subobject of $\Delta[n]$ that is defined similarly  to $G(n)$ by $\Sp[n]$. 
  \end{notone}
  
  \begin{defone} \label{def:complete Segal Spaces}
  \cite[6]{rezk2001css}
   A Reedy fibrant simplicial space $W$ is called a {\it complete Segal space} if for every map $f: A \to B$ in $\Seg \cup \Comp$, 
   the induced map 
   $$\Map_{\ss}(B,W) \to \Map_{\ss}(A,W)$$
   is a Kan equivalence.
  \end{defone}
  
  Similar to quasi-categories, complete Segal spaces have the characteristics of a category. In particular, the elements in $W_{00}$ are the objects and the elements in $W_{10}$. Moreover, the denote the subspace of $W_1$ consisting of equivalence by $W_{hoequiv} \subseteq W_1$ (following \cite[Section 6]{rezk2001css}.
  The definition already suggests that complete Segal spaces are fibrant objects in a model structure.
 
 \begin{theone} \label{the:css model structure}
  \cite[Theorem 7.2]{rezk2001css}
  There is a unique simplicial combinatorial, left proper model structure on the category of simplicial spaces, 
  called the {\it complete Segal space model structure} and denoted by $\ss^{CSS}$, 
  given by the following specifications:
  \begin{enumerate}
   \item A map $f: Y \to Z$ is a cofibration if it is a monomorphism.
   \item An object $W$ is fibrant if it is a complete Segal space.
   \item A map $f: Y \to Z$ is a weak equivalence if for every complete Segal space $W$ the induced map 
   $$\Map_{\ss}(Z,W) \to \Map_{\ss}(Y,W)$$
   is a Kan equivalence.
  \end{enumerate}
 \end{theone}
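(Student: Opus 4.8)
The plan is to obtain $\ss^{CSS}$ as a left Bousfield localization of the Reedy model structure on simplicial spaces, so that all three specifications fall out of \cref{the:bousfield localization} combined with \cref{def:complete Segal Spaces}. First I would produce the base model structure: applying \cref{the:reedy model structure} to the Kan model structure $\s^{Kan}$ -- which by \cref{the:kan model structure} is combinatorial, simplicial, and left proper with monomorphisms as cofibrations -- yields the Reedy model structure $\ss^{Ree_{Kan}}$ on $\ss$. This is again combinatorial, simplicial, and left proper, and its cofibrations are exactly the monomorphisms, so it satisfies the hypotheses of \cref{the:bousfield localization}.

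Next I would localize at $\mathcal{L} = \Seg \cup \Comp$ from \cref{def:segal and completeness maps}. The only point to verify before invoking \cref{the:bousfield localization} is that every map in $\mathcal{L}$ is a cofibration: the Segal maps $G(n) \hookrightarrow F(n)$ are subobject inclusions by construction, and the completeness map $F(0) \to E(1)$, being a degreewise injection, is a monomorphism as well, so all of $\mathcal{L}$ consists of Reedy cofibrations. \cref{the:bousfield localization} then produces a unique left proper, combinatorial, simplicial model structure $\ss^{Ree_{\mathcal{L}}}$ whose cofibrations coincide with the Reedy (hence monomorphism) cofibrations, establishing specification (1).

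It then remains to match the fibrant objects and the weak equivalences. By \cref{the:bousfield localization}(2), the fibrant objects are precisely the Reedy fibrant $W$ for which $\Map_{\ss}(B,W) \to \Map_{\ss}(A,W)$ is a Kan equivalence for every $(A \to B)$ in $\mathcal{L} = \Seg \cup \Comp$; this is verbatim \cref{def:complete Segal Spaces}, giving specification (2). Likewise \cref{the:bousfield localization}(3) identifies the weak equivalences as those $f\colon Y \to Z$ for which $\Map_{\ss}(Z,W) \to \Map_{\ss}(Y,W)$ is a Kan equivalence for every $\mathcal{L}$-local $W$, i.e.\ every complete Segal space, which is exactly specification (3). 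Uniqueness is then automatic: a model structure is determined by its cofibrations together with its weak equivalences, and specifications (1) and (3) pin both classes down, so any model structure meeting the stated conditions coincides with $\ss^{Ree_{\mathcal{L}}}$.

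Since the statement is cited directly from Rezk, the genuinely substantive input -- the existence of left Bousfield localizations of combinatorial model categories -- is already packaged into \cref{the:bousfield localization}, which I am free to assume. Given that, the argument is a short bookkeeping deduction, and the only step demanding real attention is the compatibility between the local-object condition of the localization and the definition of a complete Segal space; because the two conditions are stated in the same form, this matching is transparent, and the main (minor) obstacle reduces to confirming that the generating maps in $\Seg \cup \Comp$ are monomorphisms.
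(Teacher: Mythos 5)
Your proof is correct and takes essentially the same route as the source it reconstructs: the paper cites this statement to Rezk, whose Theorem 7.2 obtains $\ss^{CSS}$ exactly as the left Bousfield localization of the Reedy model structure on $\ss$ at $\Seg \cup \Comp$, and this is also precisely the strategy the paper itself uses for the generalization in \cref{the:cso model structure}. Your bookkeeping — checking the hypotheses of \cref{the:reedy model structure} and \cref{the:bousfield localization}, verifying the generating maps are monomorphisms, matching local objects verbatim with \cref{def:complete Segal Spaces}, and deducing uniqueness from cofibrations plus weak equivalences — is the intended argument.
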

 
 Before we proceed to the comparison between complete Segal spaces and quasi-categories, we want to generalize complete Segal spaces
 to complete Segal objects. 
 
 Let $\C^\M$ be a model category. The fact that $\C$ has small coproducts implies that we have a functor
 $$\Cop: \set \to \C,$$
 uniquely characterized by the fact that it takes the one element set to the final object in $\C$. 
 This can be extended to a functor of simplicial objects:
 $$\sCop: \sset \to s\C$$
  Now we can use this notation for following definition.
  
  \begin{defone} \label{def:seg and comp maps}
   For a given category $\C$ define the sets $\Seg_\M$ and $\Comp_\M$ as 
   $$ \Seg_\M = \{ \sCop(\Sp[n] \hookrightarrow \Delta[n]) : n \geq 2 \}$$
   $$ \Comp_\M = \{ \sCop(\Delta[0] \hookrightarrow J[1]) \}$$
  \end{defone}

 \begin{defone} \label{def:cso}
  Let $\C^\M$ be a combinatorial, simplicial, left proper model category such that the cofibrations are the monomorphisms. 
  A functor $W: \Delta^{op} \to \C$ is 
  called a {\it complete Segal object} if it satisfies following conditions:
  \begin{enumerate}
   \item It is fibrant in $s\C^{Ree_\M}$.
   \item For every morphism $A \to B$ in $\Seg_\M \cup \Comp_\M$ and every object $K$ in $\C$ the induced map 
   $$\Map_{s\M}(B \times K , W) \to \Map_{s\M}(A \times K,W)$$
   is a Kan equivalence.
  \end{enumerate}
 \end{defone}

 \begin{theone} \label{the:cso model structure}
  Let $\C^\M$ be a combinatorial, simplicial, left proper model category such that the cofibrations are monomorphisms. Then there exists a unique 
  combinatorial, simplicial, left proper model category structure on the category $s\C$, denoted $s\C^{CSO_\M}$ and called the 
  {\it complete Segal object model structure}, such that it satisfies following conditions:
  \begin{enumerate}
   \item A map $A \to B$ is a cofibration if it is a monomorphism of simplicial objects.
   \item An object $W$ is fibrant if it is a complete Segal object.
   \item A map of simplicial objects $Y \to Z$ is a weak equivalence if the induced map 
   $$\Map_{s\C}(Z,W) \to \Map_{s\C}(Y,W)$$
   is a Kan equivalence for every complete Segal object $W$.
  \end{enumerate}
  We will often simplify the notation to $s\C^{CSO}$, if the model structure $\M$ is clear from the context. 
 \end{theone}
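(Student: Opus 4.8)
The plan is to realize $s\C^{CSO_\M}$ as a left Bousfield localization of the Reedy model structure and then identify its fibrant objects with the complete Segal objects of \cref{def:cso}. First I would invoke \cref{the:reedy model structure}: since $\C^\M$ is combinatorial, simplicial and left proper with cofibrations the monomorphisms, the Reedy structure $s\C^{Ree_\M}$ exists with these same properties and with cofibrations exactly the monomorphisms of simplicial objects. This already supplies the class of cofibrations demanded in (1), and it is the structure I will localize.

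Next I would build the localizing set. As $\C^\M$ is combinatorial, $\C$ is locally presentable; fix a regular cardinal $\kappa$ for which $\C$ is locally $\kappa$-presentable and let $\mathcal{G}$ be a set of representatives of the $\kappa$-presentable objects, so every object of $\C$ is a $\kappa$-filtered colimit of objects of $\mathcal{G}$. Define
\[
\L = \{\, j \times K : j \in \Seg_\M \cup \Comp_\M,\ K \in \mathcal{G} \,\},
\]
where $K$ is regarded as a constant simplicial object and $j \times K$ is the levelwise product in $\C$. By \cref{def:seg and comp maps} each $j$ is the image under $\sCop$ of a monomorphism of simplicial sets, hence a monomorphism of simplicial objects; and $j \times K$ is a pullback of $j$ along a projection, so it too is a monomorphism, i.e.\ a Reedy cofibration. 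Thus $\L$ is a set of cofibrations and \cref{the:bousfield localization} applies, producing a left proper, simplicial, cofibrantly generated (hence, as $s\C$ is locally presentable, combinatorial) model structure $s\C^{(Ree_\M)_\L}$. Its cofibrations coincide with the Reedy cofibrations, giving (1), and its fibrant objects are precisely the Reedy fibrant $\L$-local objects.

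The core of the argument is then to check that a Reedy fibrant $W$ is $\L$-local if and only if it is a complete Segal object. Since $\L$ only imposes the local condition for $K \in \mathcal{G}$ while \cref{def:cso} imposes it for all $K$, one implication is immediate; the real content — and the step I expect to be the main obstacle — is propagating the condition from the generators to an arbitrary $K$. For fixed $j: A \to B$ in $\Seg_\M \cup \Comp_\M$ and fixed Reedy fibrant $W$, I would show that the class of objects $K$ for which $\Map_{s\C}(B \times K, W) \to \Map_{s\C}(A \times K, W)$ is a Kan equivalence contains $\mathcal{G}$ and is closed under the $\kappa$-filtered colimits presenting $K$: writing $K \cong \colim_i G_i$ with $G_i \in \mathcal{G}$, the functors $K \mapsto \Map_{s\C}(A \times K, W)$ and $K \mapsto \Map_{s\C}(B \times K, W)$ are accessible and carry this colimit to a homotopy limit (using that the relevant products distribute over the colimit, that filtered colimits compute homotopy colimits in the combinatorial left proper $s\C^{Ree_\M}$, and that $W$ is fibrant). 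Then the comparison at $K$ is a homotopy limit of equivalences, hence an equivalence. This identifies the $\L$-local objects with the complete Segal objects, giving (2). The distribution and accessibility bookkeeping is where the technical weight lies, and it is the part I would check most carefully.

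Finally, with the fibrant objects identified, \cref{the:bousfield localization}(3) describes the weak equivalences of $s\C^{(Ree_\M)_\L}$ as the maps $Y \to Z$ inducing Kan equivalences $\Map_{s\C}(Z, W) \to \Map_{s\C}(Y, W)$ for every $\L$-local $W$, i.e.\ for every complete Segal object, which is exactly (3). For uniqueness, any model structure satisfying (1)--(3) has the monomorphisms as cofibrations and, by (3), has its weak equivalences completely determined by the complete Segal objects, which by (2) are fixed; since a model structure is determined by its cofibrations together with its weak equivalences, it must coincide with $s\C^{(Ree_\M)_\L}$. Setting $s\C^{CSO_\M} := s\C^{(Ree_\M)_\L}$ then completes the proof.
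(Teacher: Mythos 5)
Your proposal is correct and follows essentially the same route as the paper: both construct $s\C^{CSO_\M}$ as the left Bousfield localization (\cref{the:bousfield localization}) of the Reedy model structure $s\C^{Ree_\M}$ from \cref{the:reedy model structure} at the set of cofibrations $j \times K$, with $j \in \Seg_\M \cup \Comp_\M$ and $K$ running over a set of generators of the locally presentable category $\C$. The only difference is one of detail rather than of strategy: the paper declares that this localization ``immediately satisfies the desired results,'' whereas you make explicit the two points it leaves implicit --- the filtered-colimit argument propagating the locality condition from generating objects $K$ to arbitrary $K$ (so that the local objects are exactly the complete Segal objects of \cref{def:cso}), and the uniqueness of a model structure with prescribed cofibrations and weak equivalences.
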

 
  \begin{proof}
   By \cref{the:reedy model structure}, $s\C^{Ree_\M}$ is also combinatorial, simplicial and left proper and 
   where the cofibrations are monomorphisms. In particular, $s\C$ is locally presentable and so we can choose a get of generators 
   $\mathcal{I}$
   Hence, by \cref{the:bousfield localization}, we can construct the localized model structure $s\C^{(Ree_\M)_{\L}}$, where 
   $$\L = \{ A \times K \to B \times K | A \to B \in \Seg_\M \cup \Comp_\M, K \in \mathcal{I} \}$$
   which immediately satisfies the desired results.
  \end{proof}

 \begin{remone}
  See \cite[Proposition 2.2.9]{riehlverity2017inftycosmos} 
  for more details about the complete Segal object model structure (there called {\it Rezk object}) and other properties it inherits from 
  $\C^\M$.
 \end{remone}

 Finally, the complete Segal objects are also homotopy invariant.
 
\begin{theone} \label{the:equiv of CSO}
  Let 
  \begin{center}
   \adjun{\C^\M}{\D^\N}{F}{G}
  \end{center}
  be a Quillen equivalence of combinatorial, simplicial, left proper model structures. 
  Then the induced adjunction 
    \begin{center}
   \adjun{(s\C)^{CSO_\M}}{(s\D)^{CSO_\N}}{sF}{sG}
  \end{center}  
  is also a Quillen equivalence. 
 \end{theone}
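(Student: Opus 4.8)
The plan is to realize both complete Segal object structures as left Bousfield localizations of the corresponding Reedy structures and then transport the Reedy Quillen equivalence across these localizations. Concretely, by \cref{lemma:reedy invariant} the induced adjunction $(sF,sG)\colon (s\C)^{Ree_\M} \rightleftarrows (s\D)^{Ree_\N}$ is already a Quillen equivalence, and by the proof of \cref{the:cso model structure} the structures $(s\C)^{CSO_\M}$ and $(s\D)^{CSO_\N}$ are the localizations of these Reedy structures at the sets built from $\Seg_\M \cup \Comp_\M$ and $\Seg_\N \cup \Comp_\N$. Since the two localizations share their cofibrations with the Reedy structures, the whole statement reduces to a single compatibility: that the right adjoint $sG$ both preserves and reflects the fibrant objects of the localized structures, namely the complete Segal objects. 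I would isolate this as the key lemma: a Reedy fibrant $W \in s\D$ is a complete Segal object if and only if $sG(W)$ is a complete Segal object in $s\C$.

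To prove this lemma I would use the finite-limit reformulation of \cref{def:cso}. For a Reedy fibrant $W$, the ``for every object $K$'' clause in \cref{def:cso} detects weak equivalences by mapping out of all objects, so being a complete Segal object is equivalent to asking that finitely many canonical maps---the Segal maps $W_n \to W_1 \times_{W_0} \cdots \times_{W_0} W_1$ and the completeness map out of $W_0$---be weak equivalences in the base model category. Because $W$ is Reedy fibrant these comparison targets are honest limits of fibrant diagrams along fibrations, hence homotopy limits, and the comparison maps are maps between fibrant objects. Now $sG(W)_n = G(W_n)$, and $G$, being a right adjoint, preserves these finite limits, while being right Quillen it preserves weak equivalences between fibrant objects (Ken Brown); this yields the forward direction. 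For the converse I would use that a right Quillen functor belonging to a Quillen equivalence reflects weak equivalences between fibrant objects: if $G$ carries each Segal and completeness map to a weak equivalence, then so was the original map, so $W$ is a complete Segal object.

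With the lemma in hand the conclusion is formal. The functor $sF$ preserves monomorphisms, hence cofibrations of the localized structures; and using the simplicial adjunction $\Map_{s\D}(sF(-),V) \cong \Map_{s\C}(-,sG(V))$ together with the fact that $sG(V)$ is a complete Segal object whenever $V$ is, one sees that $sF$ carries $CSO_\M$-local equivalences to $CSO_\N$-local equivalences, so $(sF,sG)$ is a Quillen adjunction of the localized structures. Since the cofibrations, and therefore the cofibrant objects and the derived left adjoint, agree with those of the Reedy structures, the Reedy derived equivalence restricts to an equivalence between the reflective subcategories of complete Segal objects, which is exactly the derived equivalence of the localized structures. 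The main obstacle is the key lemma, and within it the two genuinely model-categorical inputs: checking that the Segal and completeness conditions really are finite homotopy limit conditions between fibrant objects, so that $G$'s preservation of limits and of weak equivalences between fibrant objects applies, and invoking the Quillen equivalence to obtain reflection of such weak equivalences. It is worth noting why the naive route fails: one cannot simply push the generating maps $\Seg_\M \cup \Comp_\M$ forward along $sF$, because $F$ need not preserve the terminal object and so $sF$ need not send $\Seg_\M$ into $\Seg_\N$; arguing through the right adjoint and the object-level reformulation sidesteps this entirely.
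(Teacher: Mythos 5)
Your proposal is correct, but it takes a genuinely different route from the paper. The paper's entire proof is the sentence ``combine \cref{lemma:reedy invariant} and \cref{the:localizing adjunctions}'': starting from the Reedy-level Quillen equivalence, one applies \cref{the:localizing adjunctions} to the composite simplicial Quillen adjunction from $(s\C)^{Ree_\M}$ to $(s\D)^{CSO_\N}$, whose derived right adjoint is fully faithful (a reflective inclusion followed by an equivalence), obtaining a model structure on $s\C$ whose weak equivalences are created by $sF$ and which is Quillen equivalent to $(s\D)^{CSO_\N}$. What that formal route compresses is the identification of this induced model structure with the $(s\C)^{CSO_\M}$ of \cref{the:cso model structure}, i.e.\ the statement that $sF(f)$ is a $CSO_\N$-equivalence exactly when $f$ is a $CSO_\M$-equivalence. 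Your key lemma --- a Reedy fibrant $W$ is a complete Segal object if and only if $sG(W)$ is --- is precisely this comparison of local objects, and you then assemble the equivalence by hand through Bousfield-localization descent rather than by citing \cref{the:localizing adjunctions}. The trade-off: the paper's argument is shorter and purely formal, while yours is self-contained, makes the matching of fibrant objects explicit, and correctly diagnoses why the naive strategy fails ($F$ need not preserve the terminal object, so $sF$ does not carry $\Seg_\M$ into $\Seg_\N$).

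Two points in your key lemma deserve attention. First, your object-level reformulation of \cref{def:cso} --- that for Reedy fibrant $W$ the condition amounts to the Segal maps $W_n \to W_1 \times_{W_0} \cdots \times_{W_0} W_1$ and the completeness map being weak equivalences in $\C^\M$ --- rests on identifications such as $\Map_{s\C}(\sCop(\Sp[n]) \times K, W) \cong \Map_\C(K, W_1 \times_{W_0} \cdots \times_{W_0} W_1)$, and these require $- \times K$ to distribute over the coproducts and pushouts from which $\sCop(\Sp[n])$ and $\sCop(J[1])$ are built; equivalently, they read $B \times K$ as a copower. This holds in every instance the paper uses (presheaf categories and over-categories thereof) and is surely the intended reading of \cref{def:cso}, but it is not forced by the stated hypotheses on $\C^\M$ (pointed simplicial sets satisfy them and are not distributive), so it should be flagged as an assumption or the Segal and completeness conditions should be phrased via copowers. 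Second, ``finitely many'' and ``finite homotopy limit'' are inaccurate: there is one Segal map for each $n \geq 2$, and the completeness condition involves the limit weighted by $J[1]$, which is a countably infinite limit. Nothing breaks --- $G$ is a right adjoint and preserves all limits, and weighted limits of Reedy fibrant objects along arbitrary simplicial sets remain fibrant and homotopy invariant --- but finiteness plays no role and should not be invoked.
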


 \begin{proof}
  This follows directly from combining \cref{lemma:reedy invariant} and \cref{the:localizing adjunctions}.
 \end{proof}
 
 Until now we have claimed that quasi-categories and complete Segal spaces are two models of $(\infty,1)$-categories. 
 This requires knowing that they are in fact equivalent. This was proven by Joyal and Tierney \cite{joyaltierney2007qcatvssegal}, 
 who constructed two Quillen equivalences between complete Segal spaces and quasi-categories that will play an important role 
 later on and hence we will review here.
 
  Let 
  $$p_1: \Delta \times \Delta \to \Delta$$ 
  be the projection functor that takes $([n],[m])$ to $[n]$. 
  
  Similarly, let 
  $$i_1: \Delta \to \Delta \times \Delta$$
  be the inclusion functor that takes $[n]$ to $([n],[0])$.
  
 \begin{theone} \label{the:jt t}
 \cite[Theorem 4.11]{joyaltierney2007qcatvssegal}
  The induced adjunction 
  \begin{center}
   \adjun{\sset^{Joy}}{\ss^{CSS}}{p_1^*}{i_1^*}
  \end{center}
  is a Quillen equivalence between the Joyal model structure and the complete Segal space model structure.
  Here $i_1^*$ and $p_1^*$ are defined by precomposition via the functors $i_1$ and $p_1$.
 \end{theone}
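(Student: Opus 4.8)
The plan is to verify the three ingredients of a Quillen equivalence in turn --- the adjunction, the Quillen pair, and the equivalence --- isolating the genuine homotopical content in the final step. Throughout I treat $p_1^*$ as the ``vertically discrete'' embedding $(p_1^*S)_{n,m}=S_n$ and $i_1^*$ as the ``$0$-th column'' functor $i_1^*X=X_{\bullet,0}$.

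First I would record the adjunction and its formal consequences. Since $[0]$ is terminal in $\Delta$, the unique maps $[m]\to[0]$ exhibit $p_1$ as left adjoint to $i_1$ between $\Delta\times\Delta$ and $\Delta$; passing to opposite categories reverses this once and the contravariance of precomposition reverses it again, yielding the stated adjunction $p_1^*\dashv i_1^*$. Crucially $i_1^*p_1^*=\id_{\sset}$, so the unit is an isomorphism and $p_1^*$ is homotopically fully faithful. Both \cref{the:joyal model structure} and \cref{the:css model structure} have the monomorphisms as cofibrations, and $p_1^*$ visibly preserves monomorphisms, hence cofibrations.

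Next I would check that $(p_1^*,i_1^*)$ is a Quillen pair. The mechanism is that $p_1^*$ matches the two localizing families exactly: it carries $\partial\Delta[n]\hookrightarrow\Delta[n]$ to $\partial F(n)\hookrightarrow F(n)$, an inner horn $\Lambda[n]_i\hookrightarrow\Delta[n]$ (with $0<i<n$) to $L(n)_i\hookrightarrow F(n)$, and the completeness generator $\Delta[0]\hookrightarrow J[1]$ to $F(0)\to E(1)$, which is precisely the map in $\Comp$ of \cref{def:segal and completeness maps}. Since the inner-horn inclusions are weak equivalences in $\ss^{CSS}$ (they induce the same localization as the Segal maps $G(n)\hookrightarrow F(n)$) and $F(0)\to E(1)$ is a completeness equivalence, $p_1^*$ sends the generating Joyal trivial cofibrations to CSS trivial cofibrations. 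Because every object of $\sset$ is cofibrant, one may equivalently invoke the criterion detecting Quillen pairs through fibrant objects: it then suffices to show $i_1^*$ sends each complete Segal space $W$ to a quasi-category and each CSS fibration between complete Segal spaces to a Joyal isofibration. That $W_{\bullet,0}$ is a quasi-category follows from the Segal and Reedy conditions on $W$, which supply the fillers for inner horns.

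Finally, and this is where the real work lies, I would prove the adjunction is a Quillen equivalence. As the unit is already an isomorphism, by the standard criterion it remains to show that $i_1^*$ reflects weak equivalences between complete Segal spaces, equivalently that the derived counit $p_1^*i_1^*W\to W$ is a CSS weak equivalence for every complete Segal space $W$. Unwinding, this asserts that $W$ is recovered, up to CSS equivalence, from its underlying quasi-category $W_{\bullet,0}$ placed discretely in the space direction; concretely one must show that $\Map_{\ss}(W,V)$ agrees with the Joyal mapping space of $W_{\bullet,0}$ and $V_{\bullet,0}$ for every complete Segal space $V$, so that $i_1^*$ is homotopically fully faithful. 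The main obstacle is exactly this reconstruction of the vertical (space-direction) homotopy type of $W$ from its $0$-th column: the Segal condition alone only governs categorical composition, and it is the completeness condition --- encoded by $F(0)\to E(1)$, the image under $p_1^*$ of $\Delta[0]\hookrightarrow J[1]$ --- that forces the space $W_0$, and hence each $W_n$ up to the localization, to be pinned down by the discrete vertex data together with the equivalences. I would therefore organize the argument around completeness, proving that mapping into a complete Segal space cannot distinguish $W$ from $p_1^*(W_{\bullet,0})$, and then combine the unit isomorphism with this counit estimate to conclude.
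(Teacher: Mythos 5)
First, a point of comparison: the paper does not prove this statement at all --- it is imported verbatim from Joyal--Tierney \cite{joyaltierney2007qcatvssegal}, so there is no internal proof to measure your argument against; what you have written is an attempt to reprove their Theorem 4.11 from scratch. The formal portions of your proposal are correct: $[0]$ being terminal gives $p_1 \dashv i_1$ and hence $p_1^* \dashv i_1^*$; the unit is an isomorphism since $p_1 i_1 = \id_\Delta$; and the fibrant-objects criterion (left adjoint preserves cofibrations, right adjoint preserves fibrations between fibrant objects) is indeed the right way to obtain the Quillen pair, and is how Joyal--Tierney themselves proceed. Your first route to the Quillen pair, however, is flawed: the inner horn inclusions together with $\Delta[0] \hookrightarrow J[1]$ are \emph{not} a generating set of trivial cofibrations for the Joyal model structure (no explicit generating set is known; lifting against those maps only detects fibrations with fibrant codomain), so ``$p_1^*$ sends the generating Joyal trivial cofibrations to CSS trivial cofibrations'' is not an argument one can run. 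Fortunately your fallback criterion repairs this, provided you actually verify that $i_1^*$ carries complete Segal spaces to quasi-categories and CSS fibrations between them to Joyal fibrations, both of which you assert rather than prove (both follow, via adjunction, from the fact that $p_1^*$ sends inner anodyne maps and $\Delta[0] \hookrightarrow J[1]$ to CSS trivial cofibrations).

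The genuine gap is in the final step. You write that since the unit is an isomorphism, ``it remains to show'' that $i_1^*$ reflects weak equivalences between complete Segal spaces, equivalently that the counit $p_1^* i_1^* W \to W$ is a CSS equivalence for fibrant $W$. This conflates the unit with the \emph{derived} unit: the derived unit at $S$ is $S \cong i_1^* p_1^* S \to i_1^*(R\,p_1^* S)$ for a CSS fibrant replacement $R$, and since $p_1^* S$ is not fibrant, the right Quillen functor $i_1^*$ has no a priori reason to send the weak equivalence $p_1^* S \to R\,p_1^* S$ to a Joyal equivalence; controlling this map is of essentially the same difficulty as the counit statement, so your reduction is incomplete as stated. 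More decisively, the counit statement itself --- that $p_1^* i_1^* W \to W$ is a CSS equivalence for every complete Segal space $W$, i.e.\ that $W$ is recovered up to CSS equivalence from its zeroth row --- \emph{is} the mathematical content of the theorem, and your proposal never proves it: the closing paragraph (``organize the argument around completeness, proving that mapping into a complete Segal space cannot distinguish $W$ from $p_1^*(W_{\bullet,0})$'') restates the goal rather than supplying an argument. In Joyal--Tierney this step consumes most of their Section 4, comparing Dwyer--Kan/categorical equivalences across the two models. So what you have is a correct identification and reduction to the hard part of the theorem, not a proof of it. (One alternative closing move, parallel to how this paper proves its marked analogue in \cref{the:marked CSS equiv to Cartesian}, would be to first prove \cref{the:jt ip}, observe $t_! \circ p_1^* \cong \id$, and conclude by two-out-of-three for Quillen equivalences --- but that merely relocates the same hard work into the other adjunction.)
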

 
 We move on to the second Quillen equivalence. 
 Let 
 $$t: \Delta \times \Delta \to \sset$$
 be the functor defined as $t([n],[m]) = \Delta[n] \times J[m]$. 
 
 \begin{theone} \label{the:jt ip}
 \cite[Theorem 4.12]{joyaltierney2007qcatvssegal}
  Let 
  \begin{center}
   \adjun{\ss^{CSS}}{\sset^{Joy}}{t_!}{t^!}
  \end{center}
  be the adjunction induced by the map $t$ (meaning $t_!$ is the left Kan extension).
  Then this adjunction is a Quillen equivalence between the complete Segal space model structure and the Joyal model structure.
  Here $t_!$ is the left Kan extension along $t$ and $t^!$ is the right adjoint of the left Kan extension. 
 \end{theone}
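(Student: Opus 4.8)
The plan is to reduce the statement to the Quillen equivalence $(p_1^*, i_1^*)$ of \cref{the:jt t}. The key observation is that $t^!$ is a right inverse to $i_1^*$: since $J[0] = \Delta[0]$, for every simplicial set $S$ one has
\[ (i_1^* t^! S)_n = (t^! S)_{n,0} = \Hom_{\sset}(\Delta[n] \times J[0], S) = \Hom_{\sset}(\Delta[n], S) = S_n, \]
so that $i_1^* \circ t^! \cong \id_{\sset}$ naturally. Once we know that $(t_!, t^!)$ is a Quillen adjunction, this identity will suffice to upgrade it to a Quillen equivalence.

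First I would verify that $(t_!, t^!)$ is a Quillen adjunction, which is the technical core. Since the cofibrations of both $\ss^{CSS}$ and $\sset^{Joy}$ are the monomorphisms and $\ss^{CSS}$ is combinatorial, it is enough to show that the colimit-preserving functor $t_!$ sends the generating cofibrations to monomorphisms and the generating trivial cofibrations to Joyal weak equivalences; the latter split into the Reedy generators and the localizing maps in $\Seg \cup \Comp$. The localizing maps are handled formally: $t_!$ and $p_1^*$ both preserve colimits and agree on representables (one computes $t_!(F(n)) = \Delta[n]$), so $t_! \circ p_1^* \cong \id_{\sset}$. Since the Segal map $G(n) \hookrightarrow F(n)$ is $p_1^*(\Sp[n] \hookrightarrow \Delta[n])$ and the completeness map $F(0) \to E(1)$ is $p_1^*(\Delta[0] \to J[1])$, applying $t_!$ returns the inner-anodyne inclusion $\Sp[n] \hookrightarrow \Delta[n]$ and the inclusion $\Delta[0] \to J[1]$ of an object into the walking isomorphism, both of which are trivial cofibrations in $\sset^{Joy}$.

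The delicate part, and where I expect the main obstacle, is the behaviour of $t_!$ on the Reedy generators and the preservation of monomorphisms. Writing $\tau(K) = \int^{[m]} K_m \cdot J[m]$ for the realization of the vertical coordinate against the cosimplicial object $J[\bullet]$, one has $t_!(F(n) \times \Delta[m]) = \Delta[n] \times J[m]$, so that $t_!$ is, in the vertical direction, controlled by $\tau$. The Reedy generating (trivial) cofibrations are pushout--products of horizontal boundary inclusions with vertical (anodyne) inclusions, and because $\sset^{Joy}$ is a cartesian model structure, their images reduce to the maps $\tau(\partial \Delta[m]) \to J[m]$ and $\tau(\Lambda[m]_i) \to J[m]$. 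Thus everything hinges on two properties of $J[\bullet]$: that it is Reedy cofibrant as a cosimplicial object, which yields that $\tau$ (hence $t_!$) preserves monomorphisms, and that $\tau$ sends the Kan-anodyne inclusions $\Lambda[m]_i \hookrightarrow \Delta[m]$ to Joyal weak equivalences, i.e. that $\tau$ is left Quillen from $\s^{Kan}$ to $\sset^{Joy}$. This last point is exactly where the homotopy theory of $J[\bullet]$ as a groupoidal interval that inverts the vertical direction must be exploited, and it is the only step that does not follow formally from \cref{the:jt t}.

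Finally, granting that $(t_!, t^!)$ is a Quillen adjunction, I would conclude as follows. As a right Quillen functor $t^!$ preserves fibrant objects, so right derived functors compose and
\[ R i_1^* \circ R t^! \cong R(i_1^* \circ t^!) \cong R(\id_{\sset}) = \id_{\Ho(\sset^{Joy})}. \]
By \cref{the:jt t} the functor $R i_1^*$ is an equivalence of homotopy categories; since it is a left inverse of $R t^!$, it follows that $R t^! \cong (R i_1^*)^{-1}$ is an equivalence as well. Hence $(t_!, t^!)$ is a Quillen equivalence.
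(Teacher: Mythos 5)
Your proposal has a correct and genuinely useful formal skeleton, but it contains an acknowledged gap exactly at the point where the real content of the theorem lies. The reduction at the end is sound: since $J[0]=\Delta[0]$, the computation $(i_1^*t^!S)_n = \Hom_{\sset}(\Delta[n]\times J[0],S)=S_n$ does give $i_1^*\circ t^!\cong \id$, both functors are right Quillen (granting the adjunction is Quillen), derived functors of right Quillen functors compose, and so $Ri_1^*\circ Rt^!\cong \id$ together with \cref{the:jt t} forces $Rt^!$ to be an equivalence, hence $(t_!,t^!)$ a Quillen equivalence. Likewise your treatment of the localizing maps is correct: $t_!\circ p_1^*\cong\id$ by comparison on representables, $G(n)\hookrightarrow F(n)$ and $F(0)\to E(1)$ are $p_1^*$ of $\Sp[n]\hookrightarrow\Delta[n]$ and $\Delta[0]\to J[1]$, and these are Joyal trivial cofibrations.

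The gap is the Quillen adjunction itself. You correctly isolate that everything reduces, via the pushout--product decomposition $t_!(A\times B)=A\times\tau(B)$ and cartesianness of $\sset^{Joy}$, to two properties of the $J[\bullet]$-realization $\tau$: that it preserves monomorphisms (Reedy cofibrancy of $J[\bullet]$, which is a manageable combinatorial check) and that $\tau(\Lambda[m]_i)\to J[m]$ is a Joyal weak equivalence. But you then simply declare that this is "where the homotopy theory of $J[\bullet]$ must be exploited" without proving it. This is not a routine verification one can wave at: it is equivalent to showing that the right adjoint $S\mapsto\bigl([m]\mapsto\Hom_{\sset}(J[m],S)\bigr)$ carries Joyal fibrations to Kan fibrations, which rests on Joyal's lifting theorem for equivalences in quasi-categories (essentially the statement that the core of a quasi-category is a Kan complex). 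That input is the technical heart of Joyal--Tierney's Theorem 4.12 and cannot be deduced formally from \cref{the:jt t}. Note also that the paper itself offers no proof to compare against here --- \cref{the:jt ip} is recalled verbatim from \cite[Theorem 4.12]{joyaltierney2007qcatvssegal} --- so as a standalone argument your proposal is an incomplete (though well-organized) reduction of that cited theorem to its hardest lemma, not a proof of it.
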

 
 \begin{remone} \label{rem:t joyal enriched}
  The adjunction $(t_!,t^!)$ is not simplicial adjunction (in fact the Joyal model structure is not even simplicial). 
  However, the adjunction is enriched over the Joyal model structure.
  So, in particular, for two quasi-categories $S,T$ we have an equivalence of complete Segal spaces 
  $$t^!(T^S) \to t^!T^{t^!S}$$
  For a detailed discussion of this enrichment see \cite[Example 2.2.6]{riehlverity2017inftycosmos}
 \end{remone}

 The goal is to show that with some minor adjustments we can generalize these two Quillen equivalences to Quillen equivalences 
 between various notions of Cartesian fibrations. 

\subsection{Contravariant Model Structures} \label{subsec:contravariant model structures}
 In this subsection we define the contravariant model structure for simplicial spaces and simplicial sets 
 and observe that they are in fact Quillen equivalent.
 
 Let us start with the case of simplicial spaces.
 
 \begin{defone} \label{def:right fib simp space}
 \cite[Definition 3.2, Remark 4.24]{rasekh2017left}
 Let $X$ be a simplicial space. A map $p: R \to X$ is called {\it right fibration} if it is a Reedy fibration and the following is a 
 homotopy pullback square:
 \begin{center}
  \pbsq{R_n}{X_n}{R_0}{X_0}{p_n}{\ordered{n}^*}{\ordered{n}^*}{p_0},
 \end{center}
 where $\ordered{n}:F(0) \to F(n)$ is the map that takes the point to $n \in F(n)_0$. 
\end{defone}
 
 Right fibrations come with a model structure.
  \begin{theone} \label{the:contravariant Model Structure ss}
  \cite[Theorem 3.12, Remark 4.24]{rasekh2017left}
  Let $X$ be simplicial space. There is a unique simplicial, combinatorial, left proper 
  model structure on the category $\ss_{/X}$, called the contravariant model structure
  and denoted by $(\ss_{/X})^{contra}$, which satisfies the following conditions:
  \begin{enumerate}
   \item An object $R \to X$ is fibrant if it is a right fibration.
   \item A map $Y \to Z$ over $X$ is a cofibration if it is a monomorphism.
   \item A map $f: Y \to Z$ over $X$ is a weak equivalence if 
   $$\Map_{/X}(Z,R) \to \Map_{/X}(Y,R)$$ 
   is an equivalence for every right fibration $R \to X$.
  \end{enumerate}
 \end{theone}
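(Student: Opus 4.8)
The statement is recorded from \cite{rasekh2017left}, but here is how I would reconstruct it from the tools assembled above. The plan is to realize $(\ss_{/X})^{contra}$ as a left Bousfield localization of a slice of the Reedy model structure, so that conditions (1)--(3) become, respectively, the fibrancy, cofibration, and weak-equivalence descriptions produced by \cref{the:bousfield localization}.

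First I would equip $\ss = s\s$ with the Reedy model structure $s\s^{Ree_{Kan}}$ of \cref{the:reedy model structure}, taking $\C^\M = \s^{Kan}$; by \cref{the:kan model structure} the input $\s^{Kan}$ is combinatorial, simplicial, and left proper with cofibrations the monomorphisms, and \cref{the:reedy model structure} transports all of these properties to $\ss$. Next I would pass to the slice category $\ss_{/X}$ with the induced model structure, in which a map over $X$ is a (co)fibration or weak equivalence exactly when the underlying map in $\ss$ is. The slice of a combinatorial, simplicial, left proper model category over a fixed object is again combinatorial, simplicial, and left proper, its cofibrations remain precisely the monomorphisms, and its simplicial mapping space is the $\Map_{/X}$ already appearing in the statement, namely the fiber of $\Map_{\ss}(-,-) \to \Map_{\ss}(-,X)$ over the structure map.

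The substantive step is the choice of a localizing set $\L$ of cofibrations whose local objects are the right fibrations of \cref{def:right fib simp space}. Since that condition asks, for each $n$, that the square attached to $\ordered{n}\colon F(0) \to F(n)$ be a homotopy pullback over $X$, I would take $\L$ to consist of the maps over $X$ formed from $\ordered{n}\colon F(0) \to F(n)$ together with the generating cofibrations $\partial\Delta[l] \hookrightarrow \Delta[l]$ in the space direction, indexed over the simplices of each $X_n$ (a set, since every $X_n$ is a simplicial set). Applying \cref{the:bousfield localization} to this $\L$ then yields a unique combinatorial, simplicial, left proper model structure on $\ss_{/X}$ whose cofibrations are the monomorphisms and whose weak equivalences are exactly the maps inducing Kan equivalences $\Map_{/X}(Z,R) \to \Map_{/X}(Y,R)$ for all $\L$-local $R$, matching conditions (2) and (3).

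The remaining, and I expect hardest, part is identifying the $\L$-local Reedy fibrant objects with the right fibrations, i.e. establishing condition (1). Using the Yoneda identification $X_n \cong \Map_{\ss}(F(n),X)$ and the behaviour of mapping spaces in the slice, I would show that inverting every map of $\L$ against $\Map_{/X}(-,R)$ is equivalent to demanding that each square attached to $\ordered{n}$ be a homotopy pullback of spaces. This is the genuinely technical point, because one must check that the space-level (simplicial-direction) pullback condition is captured exactly by the enriched localization — not merely on $0$-simplices — which is precisely why $\L$ had to be closed up under the generating cofibrations $\partial\Delta[l] \hookrightarrow \Delta[l]$. Once this identification is in place, uniqueness is automatic, since a left proper combinatorial localization is determined by its cofibrations together with its class of local objects.
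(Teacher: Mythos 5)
Your overall strategy is the right one, and it is in fact how the cited source proves this statement (the present paper gives no internal proof; \cite[Theorem 3.12, Remark 4.24]{rasekh2017left} constructs the contravariant model structure exactly as a left Bousfield localization of the Reedy model structure on $\ss_{/X}$). However, your reason for enlarging the localizing set rests on a misconception, and the enlargement creates a proof obligation you never discharge. \cref{the:bousfield localization}(2) already declares an object local when the induced map of \emph{simplicial mapping spaces} is a Kan equivalence; since the sliced Reedy structure is simplicial, locality at the bare maps $(F(0),\ordered{n}^*(\sigma)) \to (F(n),\sigma)$, indexed by the vertices $\sigma \in X_{n0} \cong \Hom_{\ss}(F(n),X)$, is already a space-level condition, not a condition on $0$-simplices. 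Concretely, $\Map_{/X}((F(n),\sigma),R)$ is the fiber of $R_n \to X_n$ over $\sigma$; a Reedy fibration is in particular a levelwise Kan fibration, and a map between Kan fibrations over $X_n$ (here the comparison map $R_n \to R_0 \times_{X_0} X_n$, whose target computes the homotopy pullback because $R_0 \to X_0$ is a Kan fibration) is a weak equivalence if and only if it induces equivalences on fibers over every vertex of $X_n$. That is precisely locality at the minimal set, and it is precisely the homotopy pullback condition of \cref{def:right fib simp space}. So the minimal set already has the right fibrations as its local objects, and conditions (1)--(3) follow from \cref{the:bousfield localization} with no further closure.

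By contrast, with your enlarged set (pushout-products with $\partial\Delta[l] \hookrightarrow \Delta[l]$, indexed over all simplices of $X_n$) you must \emph{additionally} prove that every right fibration is local with respect to the added maps; otherwise the fibrant objects of your localization would form a proper subclass of the right fibrations and condition (1) would fail. This is true but not free: for a structure map $F(n)\times\Delta[l] \to X$ that does not factor through $F(n)$, the verification amounts to showing the added map is already a contravariant equivalence, which needs either a direct computation with homotopy pullbacks and cotensors, or the SM7 argument over $F(n)$ combined with invariance of the contravariant structure under the Reedy equivalence $F(n)\times\Delta[l] \to F(n)$ --- machinery well beyond what your proposal sets up. The fix is simply to shrink $\L$ to $\{(F(0),\ordered{n}^*(\sigma)) \to (F(n),\sigma) : n \geq 0,\ \sigma \in X_{n0}\}$: then the ``genuinely technical point'' you defer collapses to the short fiberwise argument above, and the rest of your proposal goes through as written.
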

 
 Now we can move on to the case of simplicial sets. 
 
 \begin{defone} \label{def:right fib simp set}
  \cite[Definition 2.0.0.3]{lurie2009htt}
  A map of simplicial sets $p: R \to S$ is a {\it right fibration} if it has the right lifting property with respect to squares
  \begin{center}
   \liftsq{\Lambda \leb n \reb^i}{R}{\Delta \leb n \reb }{S}{}{}{p}{}
  \end{center}
  where $n \geq 0$ and $ 0  < i \leq n$.
 \end{defone}
 
 Similar to above, we can define a model structure with fibrant objects right fibrations.
 
 \begin{theone} \label{the:contravariant Model Structure sset}
  \cite[Proposition 2.1.4.7, Proposition 2.1.4.8, Proposition 2.1.4.9, Remark 2.1.4.12]{lurie2009htt}
  Let $S$ be a simplicial set. There is a unique simplicial, combinatorial, left proper 
  model structure on the category of simplicial sets over $S$, called the {\it contravariant model structure} and denoted by $(\sset_{/S})^{contra}$, 
  such that 
  \begin{enumerate}
   \item An object $R \to S$ is fibrant if it is a right fibration.
   \item A map $T \to U$ over $S$ is a cofibration if it is a monomorphism.
   \item A map $T \to U$ over $S$ is a weak equivalence if the induced map 
   $$\Map_{/S}(U,R) \to \Map_{/S}(T,R)$$
   is a Kan equivalence for every right fibration $R \to S$.
  \end{enumerate}
 \end{theone}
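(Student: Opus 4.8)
The plan is to construct the contravariant model structure on $\sset_{/S}$ directly, by specifying the cofibrations and a generating set of trivial cofibrations, rather than as a left Bousfield localization of a model structure already present on $\sset_{/S}$. The reason is a structural subtlety worth stating at the outset: the fibrant objects are to be the right fibrations of \cref{def:right fib simp set}, and these form a \emph{larger} class than the Kan fibrations over $S$ (for instance the inclusion of the initial vertex $\Delta[0]\hookrightarrow\Delta[1]$ is a right fibration but not a Kan fibration, its fibers varying contravariantly rather than being homotopy invariant). Since left Bousfield localization in the sense of \cref{the:bousfield localization} can only shrink the class of fibrant objects, the contravariant structure cannot be obtained by localizing the slice of the Kan model structure. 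I would instead exploit that $\sset_{/S}$ is a presheaf topos, namely presheaves of sets on the category of simplices of $S$, hence locally presentable, and that it is tensored and cotensored over $\sset$ through the simplicial mapping object $\Map_{/S}(-,-)$.

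First I would declare the cofibrations to be the monomorphisms and introduce the class of \emph{right-anodyne} maps over $S$ as the weak saturation (closure under pushout, transfinite composition and retract) of a set containing the right horn inclusions $\Lambda[n]_i\hookrightarrow\Delta[n]$ with $0<i\le n$, taken over all simplices $\Delta[n]\to S$, together with enough of their pushout-products with the boundary inclusions $\partial\Delta[m]\hookrightarrow\Delta[m]$ to enforce the simplicial axiom. Because $S$ has only a set of simplices this class is generated by a set. The recognition lemma is then that a map $R\to S$ has the right lifting property against every right-anodyne map if and only if it is a right fibration; since the right lifting property is preserved under the saturation operations, testing against the saturated class is no stronger than testing against the generating right horns, so this is exactly \cref{def:right fib simp set}. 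Granting the lemma, the small object argument supplies functorial factorizations, the general existence machinery for cofibrantly generated model structures on a presheaf topos produces the structure, left properness is automatic because every object is cofibrant, and combinatoriality follows from local presentability together with the chosen set of generators.

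Next I would verify the simplicial axiom: the pushout-product of a monomorphism over $S$ with a monomorphism of simplicial sets is a cofibration, and a trivial cofibration as soon as one factor is. This is the two-variable compatibility of the enrichment $\Map_{/S}$ with the Kan model structure and, after the recognition lemma, reduces to the stability of right-anodyne maps under pushout-product with arbitrary monomorphisms. Finally, to match the description of the weak equivalences in item~(3), I would invoke the standard fact that in a structure built this way a map $f$ is a weak equivalence precisely when $\Map_{/S}(-,R)$ carries $f$ to a Kan equivalence for every fibrant $R$, that is, for every right fibration; this identifies the abstract weak equivalences with those of item~(3) and, since a left proper model structure is determined by its cofibrations and weak equivalences, yields the asserted uniqueness.

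The main obstacle I anticipate is the pushout-product stability of right-anodyne maps, together with the half of the recognition lemma asserting that right-anodyne maps are genuine trivial cofibrations: this is the analogue for right fibrations of the classical statement that anodyne extensions are closed under pushout-product with cofibrations, and it is precisely where the interaction between the lifting conditions and the simplicial enrichment must be controlled, so that the construction genuinely departs from a naive localization. An alternative route, perhaps better adapted to the tools assembled in this paper, would be to transfer the already available contravariant structure on simplicial spaces from \cref{the:contravariant Model Structure ss} along the constant embedding $\sset\to\ss$, $S\mapsto p_1^*S$, by way of \cref{the:localizing adjunctions} or \cref{the:right induced}; the cost there is isolating a right adjoint that detects the contravariant equivalences, which in the end comes back to the same recognition problem.
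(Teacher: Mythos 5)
Your construction breaks precisely at the step you treat as routine: taking the right-anodyne maps over $S$ as the generating trivial cofibrations and letting ``the general existence machinery'' do the rest. That machinery (Kan's recognition theorem, or Smith's theorem with a prescribed generating set $J$) requires that \emph{every} monomorphism which is a weak equivalence lie in the saturation of $J$, and for the contravariant model structure this is false: its trivial cofibrations form a strictly larger class than the right-anodyne maps, and consequently its fibrations are \emph{not} the maps with the right lifting property against right horns. The failure is already visible over $S=\Delta[0]$, where $\sset_{/S}\cong\sset$, right fibrations over a point are exactly the Kan complexes, and the weak equivalences of item (3) are the usual weak homotopy equivalences, so the model structure of the statement is the Kan model structure (\cref{the:kan model structure}). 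Now consider the very map you mention in your opening paragraph, the initial-vertex inclusion $\{0\}\hookrightarrow\Delta[1]$. It is a monomorphism and a weak equivalence in the sense of item (3) (for every Kan complex $R$ both mapping spaces are equivalent to $R$), hence it must be a trivial cofibration; alternatively this is forced by two-out-of-three from the right-anodyne map $\Lambda[1]_1=\{1\}\hookrightarrow\Delta[1]$ and the isomorphisms $\{1\}\cong\Delta[0]\cong\{0\}$. On the other hand, $\{0\}\hookrightarrow\Delta[1]$ is a \emph{fibration} in your proposed structure: it has the right lifting property against every right horn $\Lambda[n]_i\hookrightarrow\Delta[n]$, $0<i\le n$ (any map $\Delta[n]\to\Delta[1]$ that is constant at $0$ on the horn is constant at $0$, since for $n\ge 2$ the horn contains all vertices and for $n=1$ an order-preserving map $[1]\to[1]$ sending $1$ to $0$ is constant), and the pushout-product maps you add to $J$ impose no further condition because they are themselves right-anodyne by Joyal's pushout-product theorem. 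Since $\{0\}\hookrightarrow\Delta[1]$ visibly admits no lift against itself, you have exhibited a trivial cofibration without the left lifting property against a fibration, so the lifting axiom fails and no model structure with your fibration class exists. In short: fibrant \emph{objects} and fibrations \emph{between fibrant objects} are detected by right horns, but the full fibration class of the contravariant model structure admits no such horn-lifting description, and your recognition lemma, while true, cannot carry the weight you place on it.

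This is exactly why the proof the paper points to (Lurie, HTT 2.1.4.7--2.1.4.9) runs in the opposite order: the contravariant \emph{weak equivalences} are defined first, intrinsically, and Smith's theorem for combinatorial model structures is applied with the class $W$ as input and \emph{no} explicit set of generating trivial cofibrations ever exhibited; only afterwards are the fibrant objects identified with the right fibrations, as a separate argument. Right-anodyne maps are then trivial cofibrations, but not conversely. Your fallback suggestion can also be repaired, but not in the form you state it: you are right that the sliced Kan model structure cannot be localized to produce this structure, and a transfer along $p_1^*$ runs into the same recognition problem you flag; the route that does work is to localize the slice of the \emph{Joyal} model structure $(\sset^{Joy})_{/S}$ (left proper and combinatorial by \cref{the:joyal model structure}, so Bousfield localization applies) at the set of right horn inclusions over $S$ --- right fibrations are categorical fibrations, and one shows the local fibrant objects are exactly the right fibrations over $S$. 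Either of these, but not a $J$-generated construction with $J$ the right horns, yields the theorem.
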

 
 Notice, we called both model structures (\cref{the:contravariant Model Structure sset}, \cref{the:contravariant Model Structure ss}) 
 contravariant, as they are in fact Quillen equivalent.
 
 \begin{theone} \label{the:equivalence of contra}
 \cite[Theorem B.12, Theorem B.13]{rasekh2017left}
  Let $S$ be a simplicial set and $X$ a simplicial space. There are Quillen equivalences of contravariant model structures
  \begin{center}
   \adjun{(\ss_{/X})^{contra}}{(\sset_{/t_!X})^{contra}}{t_!}{u^*t^!} \ \ 
   \adjun{(\sset_{/S})^{contra}}{(\ss_{/p_1^*S})^{contra}}{p_1^*}{i_1^*}
  \end{center}
 \end{theone}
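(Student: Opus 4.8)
The plan is to derive both equivalences from the two Joyal--Tierney equivalences (\cref{the:jt t} and \cref{the:jt ip}) by first transporting them to the relevant slice categories and then exhibiting each contravariant model structure as a left Bousfield localization, in such a way that the localizing sets correspond under the comparison functors.

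Since the cofibrations of both $\sset^{Joy}$ and $\ss^{CSS}$ are the monomorphisms, every object is cofibrant, so \cref{prop:quillen equiv over cat} applies to the Joyal--Tierney equivalences. Fixing $S$ and $X$, it produces Quillen equivalences of slice model structures
\[
(p_1^*, i_1^*)\colon (\sset_{/S})^{Joy} \longrightarrow (\ss_{/p_1^*S})^{CSS}, \qquad (t_!, u^*t^!)\colon (\ss_{/X})^{CSS} \longrightarrow (\sset_{/t_!X})^{Joy}.
\]
For the first, the unit correction $u^*$ is trivial because $i_1^* p_1^* = \id$, so the unit $S \to i_1^* p_1^* S$ is the identity. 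These adjunctions have the correct underlying functors, and it remains only to pass from the slice Joyal, resp.\ slice $CSS$, structures to the contravariant localizations.

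The key step is to recognize each contravariant model structure as a localization at last-vertex inclusions. By \cref{def:right fib simp space}, a Reedy fibration $R \to X$ is a right fibration exactly when it is local with respect to the maps $\ordered{n}\colon F(0) \hookrightarrow F(n)$ pulled back over $X$; comparing with \cref{the:contravariant Model Structure ss}, this identifies $(\ss_{/X})^{contra}$ with the localization (\cref{the:bousfield localization}) of the slice $CSS$ structure at the set $\{\ordered{n}\ \text{over}\ X\}$. Likewise, by \cref{def:right fib simp set} and \cref{the:contravariant Model Structure sset}, a right fibration over $S$ is precisely a slice-Joyal-fibrant object that is local with respect to the last-vertex inclusions $\ordered{n}\colon \Delta[0] \hookrightarrow \Delta[n]$ over $S$, so $(\sset_{/S})^{contra}$ is the localization of the slice Joyal structure at $\{\ordered{n}\ \text{over}\ S\}$. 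A direct computation gives $p_1^*\Delta[n] = F(n)$ and $t_! F(n) = \Delta[n]$, in each case carrying $\ordered{n}$ to $\ordered{n}$; since these are cofibrations between cofibrant objects, the two localizing sets correspond under the left derived comparison functors. It is then a standard fact that a Quillen equivalence between left proper combinatorial model categories descends to a Quillen equivalence between left Bousfield localizations at corresponding sets of maps, the local objects on the two sides matching under the derived adjunction. Applying this to the two slice equivalences above, with bases $p_1^*S$ and $t_!X$, yields exactly the two claimed Quillen equivalences.

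The step I expect to be the main obstacle is the identification of the contravariant structures with these localizations, especially on the simplicial space side. One must verify that imposing locality against $\ordered{n}$ over an arbitrary, not necessarily complete, base $X$ reproduces precisely the homotopy-pullback condition of \cref{def:right fib simp space}, and that the Segal and completeness conditions carried by $CSS$ become automatic for such local objects, so that localizing the slice $CSS$ structure and localizing the slice Reedy structure agree. On the simplicial set side one must reconcile the direct construction of the contravariant model structure (\cref{the:contravariant Model Structure sset}) with the localization of the slice Joyal structure, using that every right fibration is a categorical fibration. Once these identifications are in place, the descent of the Quillen equivalences along the corresponding localizations is formal.
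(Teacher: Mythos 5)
First, be aware that the paper does not prove this statement at all: it is imported wholesale from \cite[Theorem B.12, Theorem B.13]{rasekh2017left}, so your argument has to stand entirely on its own, and as written it has genuine gaps. The step you flag as ``the main obstacle'' is not a technicality but carries essentially all of the mathematical content. Identifying $(\sset_{/S})^{contra}$ with a localization of the slice Joyal structure at final-vertex inclusions requires, at a minimum, knowing that right fibrations over $S$ are categorical fibrations and that categorical equivalences over $S$ are contravariant equivalences; these are theorems of comparable depth to the statement being proved, and nothing quoted in this paper provides them (\cref{the:contravariant Model Structure sset} constructs the contravariant structure directly, not as a localization of the Joyal slice structure). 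On the simplicial space side, \cref{the:contravariant Model Structure ss} is by construction a localization of the slice \emph{Reedy} structure, so your plan additionally needs the theorem that every right fibration over an arbitrary base $X$ is a fibration in the slice CSS structure (equivalently, that CSS equivalences over $X$ are contravariant equivalences). There is also a technical wrinkle: \cref{the:bousfield localization} as stated applies only to simplicial model structures, and the slice Joyal structure is not simplicial (cf.\ \cref{rem:t joyal enriched}), so you would need the more general, non-simplicial localization machinery.

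Second, there is a gap you did not flag, and it breaks the descent argument for $(t_!, u^*t^!)$ as written: the localizing sets do \emph{not} correspond under $t_!$. An element of the localizing set over $t_!X$ consists of a final-vertex map $\ordered{n}$ together with an \emph{arbitrary} structure map $\Delta[n] \to t_!X$, whereas applying $t_!$ to your set over $X$ only produces structure maps in the image of $t_!$; since $t_!$ is far from full, this is in general a proper subset. Concretely, for $X = \Delta[1]$ (a constant simplicial space) one has $t_!X = J[1]$, and by the Yoneda lemma $\Hom_{\ss}(F(n), \Delta[1]) \cong \Delta[1]_{n0}$ has two elements, while $\Hom_{\sset}(\Delta[n], J[1])$ has $2^{n+1}$ elements. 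The standard descent theorem therefore only yields a Quillen equivalence between $(\ss_{/X})^{contra}$ and the localization of the slice Joyal structure at this \emph{smaller} set of maps, and you would need a separate, nontrivial argument that this smaller localization already coincides with the full contravariant structure over $t_!X$. For $(p_1^*, i_1^*)$ this problem does not arise, because $p_1^*$ is fully faithful, so every final-vertex map over $p_1^*S$ is $p_1^*$ of one over $S$; but for $t_!$ your claim that ``the two localizing sets correspond under the left derived comparison functors'' is precisely what fails.
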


\subsection{Cartesian Fibrations of Bisimplicial Objects} \label{subsec:cart fib bisimp}
 In this subsection we review the complete Segal approach towards Cartesian fibrations and then prove the analogous 
 results for Cartesian fibrations of bisimplicial sets. 
 The case for bisimplicial spaces has been studied in great detail in \cite{rasekh2021cartesiancss}.
  
 Let $X$ be a simplicial space. We denote by $X$ the bisimplicial space defined as 
 $X_{kn} = X_n$. 
 
 \begin{defone} \label{def:cart fib bisimp spaces}
 \cite[Section 4]{rasekh2021cartesiancss}
  Let $X$ be a simplicial space. A map of bisimplicial spaces  $Y_{\bullet\bullet} \to X$ is a Cartesian fibration if it satisfies following three conditions:
  \begin{enumerate}
   \item $Y \to X$ is a Reedy fibration in $\sss$. 
   \item For each $k$, the map of simplicial spaces $Y_{k\bullet} \to X_\bullet$ is a right fibration.
   \item The map of simplicial spaces  $Y_{\bullet n} \to X_n$ is a complete Segal space fibration. 
  \end{enumerate}
 \end{defone}
 
 \begin{remone} \label{rem:cart fib cso}
  As $X$ is a simplicial space, we have an isomorphism of categories 
   $$\sss_{/X} \cong \Fun(\Delta^{op}, \ss_{/X})$$
  Using this isomorphism we can fact replace condition $(2)$ and $(3)$ with condition 
  \begin{enumerate}
   \item[(3')] The corresponding simplicial functor 
   $$Y: \Delta^{op} \to \ss_{/X}$$
   is a complete Segal object in the contravariant model structure on $\ss_{/X}$. 
  \end{enumerate}
  Indeed, the fact that $Y_k$ is fibrant in the contravariant model structure precisely means that $Y_k$ is a right fibration, which is condition $(2)$. Moreover, $Y \to X$ being a complete Segal object is precisely the statement that $Y_{\bullet n} \to X_n$ is a complete Segal space fibration giving us $(3)$.
 \end{remone}

 \begin{theone} \label{the:cart model structure bisimp spaces}
 \cite[Section 4]{rasekh2021cartesiancss}
  Let $X$ be a simplicial space. 
  There is a unique simplicial combinatorial left proper model structure on $\sss_{/X}$, called the Cartesian model structure
  and denoted $(\sss_{/X})^{Cart}$, 
  such that 
  \begin{itemize}
   \item An object $C \to X$ is fibrant if it is a Cartesian fibration (\cref{def:cart fib bisimp spaces}).
   \item A map $Y \to Z$ over $X$ is a cofibration if it is a monomorphism.
   \item A map $Y \to Z$ over $X$ is a weak equivalence if for every Cartesian fibration $C \to X$, the induced map 
   $$\Map_{/X}(Z,C) \to \Map_{/X}(Y,C)$$
   is a Kan equivalence.
  \end{itemize}
 \end{theone}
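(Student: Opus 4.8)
The plan is to obtain the Cartesian model structure by feeding the contravariant model structure on $\ss_{/X}$ into the general complete Segal object machinery of \cref{the:cso model structure}, and then to read off the stated characterizations using the reinterpretation of Cartesian fibrations recorded in \cref{rem:cart fib cso}. First I would invoke \cref{the:contravariant Model Structure ss}, which provides the model structure $(\ss_{/X})^{contra}$ and records that it is combinatorial, simplicial, and left proper with cofibrations exactly the monomorphisms. These are precisely the input hypotheses of \cref{the:cso model structure}, so taking $\C^\M = (\ss_{/X})^{contra}$ yields a combinatorial, simplicial, left proper complete Segal object model structure $\Fun(\Delta^{op},\ss_{/X})^{CSO_{contra}}$ on $\Fun(\Delta^{op}, \ss_{/X})$, in which the cofibrations are the level-wise monomorphisms and the fibrant objects are the complete Segal objects of $(\ss_{/X})^{contra}$.

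Next I would transport this structure along the isomorphism of categories $\sss_{/X} \cong \Fun(\Delta^{op}, \ss_{/X})$ from \cref{rem:cart fib cso}. Under this identification the cofibration and fibrancy clauses are immediate: a monomorphism of simplicial objects in $\ss_{/X}$ is detected level-wise, hence corresponds exactly to a monomorphism of bisimplicial spaces over $X$; and an object is fibrant precisely when it is a complete Segal object in the contravariant model structure, which by \cref{rem:cart fib cso} is exactly a Cartesian fibration in the sense of \cref{def:cart fib bisimp spaces}. Combinatoriality, simpliciality, and left properness are inherited directly from the output of \cref{the:cso model structure}.

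The remaining point, and the one I expect to be the main obstacle, is matching the weak equivalences. The complete Segal object structure declares $Y \to Z$ a weak equivalence when $\Map_{\Fun(\Delta^{op},\ss_{/X})}(Z,W) \to \Map_{\Fun(\Delta^{op},\ss_{/X})}(Y,W)$ is a Kan equivalence for every complete Segal object $W$, whereas the statement demands that $\Map_{/X}(Z,C) \to \Map_{/X}(Y,C)$ be a Kan equivalence for every Cartesian fibration $C$. Since the fibrant objects already coincide, it suffices to check that the isomorphism $\sss_{/X} \cong \Fun(\Delta^{op}, \ss_{/X})$ is an isomorphism of \emph{simplicial} categories, i.e. that the enrichment $\Map_{\Fun(\Delta^{op},\ss_{/X})}$ used by \cref{the:cso model structure} agrees with the native over-category enrichment $\Map_{/X}$ on bisimplicial spaces. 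This requires unwinding both mapping-space definitions and verifying that the simplicial tensoring on $\ss_{/X}$, applied level-wise in the extra simplicial direction, reproduces the tensoring on $\sss_{/X}$ over $X$; once this compatibility holds the two classes of weak equivalences coincide.

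Finally, uniqueness is automatic. The structure is built as a left Bousfield localization, and such a left proper combinatorial model structure is determined by its cofibrations together with its class of fibrant objects, both of which are pinned down above. Hence the transported model structure is the unique one on $\sss_{/X}$ with the stated properties, completing the argument.
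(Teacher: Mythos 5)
Your proposal is correct and is essentially the paper's own approach: the paper quotes this theorem from the companion work \cite[Section 4]{rasekh2021cartesiancss}, but immediately records in \cref{rem:cart model structure cso model structure} precisely your identification of $(\sss_{/X})^{Cart}$ with the complete Segal object model structure of \cref{the:cso model structure} applied to $(\ss_{/X})^{contra}$ via the isomorphism of \cref{rem:cart fib cso}, and it uses exactly this recipe (combining \cref{the:contravariant Model Structure ss}, \cref{the:cso model structure}, and \cref{the:equiv of CSO}) to establish the bisimplicial-set analogue \cref{the:cart model structure bisimp set}. The two points you flag as needing verification --- compatibility of the simplicial enrichments under the isomorphism $\sss_{/X} \cong \Fun(\Delta^{op},\ss_{/X})$, and the matching of Reedy-fibrancy conditions hidden in \cref{rem:cart fib cso} --- are glossed at the same level of detail in the paper itself, so your argument is at least as careful as the in-paper treatment.
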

 
 \begin{remone} \label{rem:cart model structure cso model structure}
  It follows from \cref{rem:cart fib cso} that this model structure is in fact the complete Segal object model structure on the contravariant 
  model structure on simplicial spaces over $X$, as constructed in \cref{the:cso model structure}.
 \end{remone}

 For more details about the Cartesian model structure (in particular various ways of characterizing its fibrant objects and 
 weak equivalences) see \cite{rasekh2021cartesiancss}.
 
 We now move on to the analogous result for Cartesian fibrations of bisimplicial sets. 
 For a given simplicial set $S$, we denote by $S$ the bisimplicial set defined as 
 $S_{kn} = S_n$. 
 
 \begin{defone} \label{def:cart fib bisimp sets}
 An object $T \to S$ in $\ssset_{/S}$ is a {\it Cartesian fibration} if the corresponding functor 
 $$T: \Delta^{op} \to (\sset_{/S})^{contra}$$
 is a complete Segal object (\cref{def:cso}) in the contravariant model structure on $(\sset_{/S})^{contra}$.
\end{defone}
 
 We want an analogous result about Cartesian fibrations of bisimplicial sets. Here we can simply combine 
 \cref{the:equiv of CSO},
 \cref{the:contravariant Model Structure ss}, 
 and \cref{the:cso model structure} 
 to get following extensive result.
 
 \begin{theone} \label{the:cart model structure bisimp set}
  Let $S$ be a simplicial sets. There exists a unique, simplicial, combinatorial, left proper model structure on the category of 
  bisimplicial sets over $S$, $\ssset_{/S}$, called the Cartesian model structure and denoted $(\ssset_{/S})^{Cart}$
  with following properties:
  \begin{enumerate}
   \item An object $C \to S$ is fibrant if it is a Cartesian fibration (\cref{def:cart fib bisimp sets}).
   \item A map of bisimplicial sets $T \to U$ over $S$ is a cofibration if it is a monomorphism.
   \item A map $T \to U$ over $S$ is a weak equivalence if for every Cartesian fibration $C \to S$, the induced map 
   $$\Map_{/S}(U,C) \to \Map_{/S}(T,C)$$
   is a Kan equivalence.
  \end{enumerate}
  Moreover, we have Quillen equivalences:
 \begin{center}
   \adjun{(\sss_{/X})^{Cart}}{(\ssset_{/st_!X})^{Cart}}{st_!}{u^*st^!} \ \ 
   \adjun{(\ssset_{/S})^{Cart}}{(\sss_{/sp_1^*S})^{Cart}}{sp_1^*}{u^*si_1^*}
   .
  \end{center}
 \end{theone}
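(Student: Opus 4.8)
The plan is to realize $(\ssset_{/S})^{Cart}$ as an instance of the complete Segal object model structure of \cref{the:cso model structure} and then transport the Joyal--Tierney comparisons levelwise. First I would record the isomorphism of categories
$$\ssset_{/S} \cong \Fun(\Delta^{op}, \sset_{/S}),$$
entirely analogous to the one in \cref{rem:cart fib cso}, obtained by viewing a bisimplicial set over $S$ as a simplicial object in $\sset_{/S}$ along the Cartesian direction. Under this identification, \cref{def:cart fib bisimp sets} says exactly that an object of $\ssset_{/S}$ is a Cartesian fibration if and only if the corresponding simplicial object is a complete Segal object in $(\sset_{/S})^{contra}$.

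For existence and uniqueness I would apply \cref{the:cso model structure} with $\C^\M = (\sset_{/S})^{contra}$, whose hypotheses are supplied by \cref{the:contravariant Model Structure sset}: the contravariant model structure is simplicial, combinatorial and left proper, with cofibrations the monomorphisms. The resulting model structure on $s(\sset_{/S}) \cong \ssset_{/S}$ then has precisely the three advertised features, since its fibrant objects are the complete Segal objects, hence the Cartesian fibrations by the previous paragraph, its cofibrations are the monomorphisms, and its weak equivalences are detected by mapping into Cartesian fibrations. Uniqueness is inherited from the uniqueness clause of \cref{the:cso model structure}.

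For the two Quillen equivalences I would combine invariance of the complete Segal object construction with the contravariant comparisons. By \cref{rem:cart model structure cso model structure} together with \cref{def:cart fib bisimp sets}, both $(\sss_{/X})^{Cart}$ and $(\ssset_{/S})^{Cart}$ are complete Segal object model structures, on $(\ss_{/X})^{contra}$ and $(\sset_{/S})^{contra}$ respectively. Now \cref{the:equivalence of contra} supplies Quillen equivalences of contravariant model structures, and feeding each of them into \cref{the:equiv of CSO} yields Quillen equivalences of the associated complete Segal object model structures, which under the above isomorphisms are exactly
\begin{gather*}
 (st_!, u^*st^!)\colon (\sss_{/X})^{Cart} \rightleftarrows (\ssset_{/st_!X})^{Cart}, \\
 (sp_1^*, u^*si_1^*)\colon (\ssset_{/S})^{Cart} \rightleftarrows (\sss_{/sp_1^*S})^{Cart}.
\end{gather*}

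The main obstacle I expect is bookkeeping rather than genuine difficulty: one must confirm that the levelwise functors $sF$ and $sG$ produced by \cref{the:equiv of CSO} agree with the functors named in the statement, keeping track of the over-category unit corrections $u^*$. Concretely, this amounts to identifying $s(u^*t^!)$ with $u^*st^!$, and the levelwise right adjoint of the second comparison with $u^*si_1^*$. Since the relevant bases and the units of the contravariant adjunctions are constant along the Cartesian direction, the over-category pullbacks commute with the levelwise application of $t_!$, $t^!$, $p_1^*$ and $i_1^*$; in the second case the unit $i_1^*p_1^* \cong \id$ is moreover an isomorphism, so that correction is trivial. Everything else is a direct citation of the three input results.
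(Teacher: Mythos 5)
Your proposal is correct and takes essentially the same route as the paper, whose entire proof is the remark that one can combine \cref{the:cso model structure} (applied to the contravariant model structure of \cref{the:contravariant Model Structure sset}) with \cref{the:equiv of CSO} (applied to the Quillen equivalences of \cref{the:equivalence of contra}), exactly as you do. If anything, your write-up is more careful than the paper's one-sentence citation, in particular in flagging the bookkeeping identification of $s(u^*t^!)$ with $u^*st^!$ and of the levelwise right adjoints with the functors named in the statement.
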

 
\section{Marked Simplicial Spaces and Cartesian Fibrations} \label{sec:marked simplicial spaces and Cartesian fibrations}
 In \cref{subsec:cart fib bisimp} we defined two Cartesian model structures: for bisimplicial spaces 
 and for bisimplicial sets.
 As mentioned before, the Cartesian model structure for marked simplicial sets had already been defined \cite{lurie2009htt}.
 This leaves us with one last Cartesian model structure: {\it marked simplicial spaces}, which is the goal of this section.
 
 \subsection{Marked Simplicial Spaces}
  In this subsection we want to study marked simplicial spaces. 
  We first review marked simplicial sets as studied in \cite{lurie2009htt}.
  
  \begin{defone} \label{def:marked simplicial set}
   A {\it marked simplicial set} is a pair $(S,A)$ where $S$ is a simplicial set and $A \subset S_1$ such that $A$ includes 
   all degenerate edges. 
   A morphism of marked simplicial sets $f:(S,A) \to (T,B)$ is a map of simplicial sets $f: S \to T$ such that $f(A) \subset B$.
   We denote the category of marked simplicial sets by $\sset^+$.
  \end{defone}
  
  We want to study the category of marked simplicial sets $\sset_+$. As we have the additional data of markings, the objects 
  are not just simplicial objects. We thus need an alternative approach.
  
  \begin{defone} \label{def:marked simplex}
   Let $\Delta^+$, the {\it marked simplex category}, be the category defined as the pushout
   \begin{center}
    \begin{tikzcd}[row sep=0.5in, column sep=0.5in]
     \{ \leb 1 \reb \to \leb 0 \reb \} \arrow[r, hookrightarrow] \arrow[d, hookrightarrow] & 
     \{ \leb 1 \reb \to \leb 1^+ \reb \to \leb 0 \reb \} \arrow[d] \\
     \Delta \arrow[r] & \Delta^+
    \end{tikzcd}
   \end{center}
   where the top and left hand maps are the evident inclusion maps (note there is a unique map $[1] \to [0]$ in $\Delta$).
  \end{defone}
  
   Using our intuition about $\Delta$, we can depict this category as follows: 
  
  \begin{center}
   \begin{tikzcd}[row sep=0.5in, column sep=0.5in]
   \leb 0 \reb 
   \arrow[r, shift left=1.2] \arrow[r, shift right=1.2]    
   \arrow[dr, shift left=1.2] \arrow[dr, shift right=1.2]   
   & \leb 1 \reb \arrow[d]
   \arrow[l, shorten >=1ex,shorten <=1ex]
   \arrow[r] \arrow[r, shift left=2] \arrow[r, shift right=2] 
   & \leb 2 \reb
   \arrow[l, shift right, shorten >=1ex,shorten <=1ex ] \arrow[l, shift left, shorten >=1ex,shorten <=1ex] 
   \arrow[r, shift right=1] \arrow[r, shift left=1] \arrow[r, shift right=3] \arrow[r, shift left=3] 
   & \cdots 
   \arrow[l, shorten >=1ex,shorten <=1ex] \arrow[l, shift left=2, shorten >=1ex,shorten <=1ex] \arrow[l, shift right=2, shorten >=1ex,shorten <=1ex]
   \\
   & \leb 1^+ \reb \arrow[ul, shorten >=1ex,shorten <=1ex] & 
  \end{tikzcd}
 \end{center}
  where the evident triangle is commutative. 
  
  We would like to prove that a marked simplicial set is just a presheaf on $\Delta^+$,
  where the image of $[1^+]$ is the set of markings. However, for that we need the additional assumption that the 
  image of the map $[1^+] \to [1]$ is an injection of sets. Hence, 
  marked simplicial sets are certain {\it separated} presheaves on $\Delta^+$.

 \begin{lemone} \label{lemma:marked simplicial set equal separated presheaf}
  Let $\J$ be the Grothendieck topology on $\Delta^+$ where the only non-trivial cover is $[1] \to [1^+]$.  
  Then the category of marked simplicial sets is isomorphic to the category of separated set valued presheaves on $\Delta^+$ with the topology $\J$.
 \end{lemone}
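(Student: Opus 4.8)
The plan is to construct the isomorphism explicitly and then check that it is bijective on objects and on morphisms. First I would define the forward functor $\Phi \colon \sset^+ \to \Fun((\Delta^+)^{op},\set)$. Given a marked simplicial set $(S,A)$, set $X = \Phi(S,A)$ with $X([n]) = S_n$ for every $[n]$ in $\Delta \subseteq \Delta^+$ and $X([1^+]) = A$. The remaining structure maps are forced by the shape of $\Delta^+$: the morphism $[1] \to [1^+]$ is sent to the inclusion $A \hookrightarrow S_1 = X([1])$, and the morphism $[1^+] \to [0]$ is sent to the corestriction $S_0 \to A$ of the degeneracy $s_0 \colon S_0 \to S_1$, which is well-defined precisely because $A$ contains every degenerate edge. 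Since the composite $[1] \to [1^+] \to [0]$ is the unique map $[1] \to [0]$ of $\Delta$, this data is compatible with the simplicial structure and assembles into an honest presheaf on $\Delta^+$. Finally, because $A \hookrightarrow S_1$ is injective, $X$ is separated for $\J$: the only nontrivial covering sieve is the one on $[1^+]$ generated by $[1] \to [1^+]$, and two sections of $X([1^+])$ agreeing after restriction along this map coincide exactly because the map is a monomorphism.

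Next I would build the inverse $\Psi$. Given a separated presheaf $X$ on $(\Delta^+,\J)$, restriction along $\Delta \hookrightarrow \Delta^+$ yields a simplicial set $S$ with $S_n = X([n])$, and I set $A \subseteq S_1$ to be the image of the marking map $X([1^+]) \to X([1])$. Separatedness makes this map injective, so $A \cong X([1^+])$ canonically. The factorization $[1] \to [1^+] \to [0]$ in $\Delta^+$ shows that $s_0 \colon S_0 \to S_1$ factors through $X([1^+]) \to X([1])$, hence every degenerate edge lies in $A$; thus $(S,A)$ is a genuine marked simplicial set, and $\Psi(X) = (S,A)$.

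Then I would verify that $\Phi$ and $\Psi$ are mutually inverse on objects (immediate from the two constructions, using injectivity to identify $X([1^+])$ with its image $A$) and extend both to morphisms. For morphisms, a natural transformation $\eta \colon \Phi(S,A) \to \Phi(T,B)$ consists of maps $\eta_{[n]} \colon S_n \to T_n$ assembling into a simplicial map $f$, together with a component $\eta_{[1^+]} \colon A \to B$ compatible with $\eta_{[1]}$; compatibility forces $f(A) \subseteq B$, and, since $B \hookrightarrow T_1$ is injective, the component $\eta_{[1^+]}$ is \emph{uniquely determined} by $f$. Conversely a marked map $f \colon (S,A) \to (T,B)$ has $f(A) \subseteq B$ and hence determines a unique such $\eta$. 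This gives a bijection on hom-sets compatible with composition, so $\Phi$ and $\Psi$ are inverse isomorphisms of categories.

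The hard part will not be any single computation but rather making the bookkeeping around the object $[1^+]$ precise: one must check that the generating cover $[1] \to [1^+]$ produces exactly the separatedness condition that $X([1^+]) \to X([1])$ is a monomorphism, and that this single monomorphism condition is what distinguishes marked simplicial sets (subsets of $S_1$) from arbitrary presheaves on $\Delta^+$, where $X([1^+])$ could map non-injectively to $X([1])$, i.e.\ allow ``marked edges with multiplicity.'' Once separatedness is identified with this injectivity, both the object and the morphism correspondences are forced, and functoriality of $\Phi$ and $\Psi$ in both directions is routine, completing the isomorphism of categories.
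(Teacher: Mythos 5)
Your proposal is correct and follows essentially the same route as the paper: both proofs come down to identifying $\J$-separatedness with injectivity of the map $X([1^+]) \to X([1])$, recognizing this as exactly the marking condition (with the factorization $[1] \to [1^+] \to [0]$ accounting for degenerate edges), and noting that naturality forces the correspondence on morphisms. You simply spell out the details -- the explicit inverse functors and the uniqueness of the component at $[1^+]$ -- that the paper's two-line proof leaves implicit.
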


 \begin{proof}
  By definition $F: \Delta^+ \to \set$ is separated if it takes covering maps to monomorphisms. 
  Hence, $F$ is separated with respect to the topology $\J$ if and only if $F([1^+]) \to F([1])$ is a monomorphism which means it is a 
  marked simplicial set, where the markings are $F([1^+])$. 
  
  It is now immediate to confirm that a natural transformation of functors corresponds to maps of marked simplicial sets.
 \end{proof}
 
 \begin{remone}
  This lemma is also proven (independently) in \cite{nlab2020marked}.
 \end{remone}
 
  \begin{remone} \label{rem:marked simplicial sets presentable}
    This lemma proves that the category of marked simplicial sets is equivalent to a category of 
    separated presheaves, meaning 
   it is a quasi-Grothendieck topos \cite[C2.2.13]{johnstone2002elephanti}, which has far-reaching implications:
  \begin{enumerate}
   \item $\sset^+$ has small limits and colimits 
   \item $\sset^+$ is a presentable category.
   \item $\sset^+$ is locally Cartesian closed, which for two objects $(X,A)$, $(Y,B)$ we denote by $(Y,B)^{(X,A)}$. 
  \end{enumerate}
 \end{remone}
 
  \begin{notone} \label{not:marked simplicial generators}
   We will make ample use of the fact that marked simplicial objects are subcategories of presheaf categories. 
   Hence we generalize our notational conventions from \cref{subsec:simp object} and
   denote the generators of $\Fun((\Delta^+)^{op},\set)$ by $\Delta^+[n]$. Notice, here $n$ is an object in 
    $\Delta^+$ and so we can also have $n = 1^+$.
  \end{notone}

 Finally, we need to realize how marked simplicial sets are related to simplicial sets \cite[Definition 3.1.0.1]{lurie2009htt}. 
 
  \begin{remone} \label{rem:unmarking sharp flat}
  There is a functor $\inc:\Delta \to \Delta^+$, which is the identity on objects and gives us a chain of adjunctions
  \begin{center}
   \begin{tikzcd}[column sep=1in]
    \Fun((\Delta^+)^{op},\set) \arrow[r, "\inc^*" description]  & 
    \Fun(\Delta^{op}, \set) \arrow[l, bend right=20, "\inc_!"', "\bot"] \arrow[l, bend left=20, "\inc_*", "\bot"']
   \end{tikzcd}
   ,
  \end{center}
  which restricts to adjunctions
  \begin{center}
   \begin{tikzcd}[column sep=1.3in]
    \sset^+ \arrow[r, "\Forget" description]  & \sset \arrow[l, bend right=20, "(-)^\flat"', "\bot"] \arrow[l, bend left=20, "(-)^\#", "\bot"']
   \end{tikzcd}
   .
  \end{center}
  Here $\Forget$ simply forgets the markings and, 
  for a given simplicial set $S$, we have $S^\flat= (S,S_0)$ and $S^\# = (S,S_1)$.
 \end{remone}
 
 \begin{remone} \label{rem:generators marked simplicial sets yoneda}
  We can use the standard simplices $\Delta[n]$ and the functors $(-)^\flat$ and $(-)^\sharp$ to get following valuable 
  isomorphisms:
  $$\Hom_{\sset^+}(\Delta[n]^{\flat}, (S,A)) \cong S_n,$$
  $$\Hom_{\sset^+}(\Delta[1]^\sharp, (S,A)) \cong A,$$
  for all marked simplicial sets $(S,A)$ and $n \geq 0$.
 \end{remone}

   Having done a careful analysis of marked simplicial sets, we are finally ready to study marked simplicial spaces.
  
  \begin{defone} \label{def:marked simp space}
   A {\it marked simplicial space} is a pair $(X,A)$, where $X$ is a simplicial space and $A \hookrightarrow X_1$ is an inclusion of spaces
   such that the degeneracy map 
   $s_0: X_0 \to X_1$ takes value in $A$. We say $X$ is the {\it underlying simplicial space} and $A$ is the {\it space of markings}.
  \end{defone}
   
  \begin{defone} \label{def:category of marked simplicial spaces}
   A map of marked simplicial spaces $f:(X,A) \to (Y,B)$ is a map of simplicial spaces $f: X \to Y$ such that $f_1$ restricts to a map 
   $(f_1)|_A: A \to B$.
   Marked simplicial spaces with the morphisms described before form a category which we denote by $\ss^+$.
  \end{defone}
  
  We now want to proceed to study the category of marked simplicial spaces. 
  Comparing \cref{def:marked simplicial set} and \cref{def:marked simp space} it immediately follows that a 
  marked simplicial space is just a simplicial object in the category of marked simplicial sets.
  Hence, we get the following lemma that allows us to 
  straightforwardly generalize results from marked simplicial sets to marked simplicial spaces.
  
  \begin{lemone}
   There is an equivalence of categories $\Fun(\Delta, \sset^+) \cong \ss^+$.
  \end{lemone}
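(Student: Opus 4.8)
The plan is to produce a strict isomorphism of categories (which is in particular an equivalence) by unwinding both sides to literally the same data. The only genuine point is that the single marking $A \hookrightarrow X_1$ of a marked simplicial space lives in the internal simplicial direction of the underlying bisimplicial set, so that transposing the two simplicial directions redistributes it into a compatible family of markings, one for each internal level. Throughout I use the conventions of \cref{subsec:simp object}, under which a simplicial space is a functor $X\colon(\Delta\times\Delta)^{op}\to\set$ with $(X_n)_m = X_{nm}$; thus the first index records the simplicial level and the second the internal space coordinate.

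First I would construct a functor $\Phi$ from $\ss^+$ to the category of simplicial objects in $\sset^+$. Given a marked simplicial space $(X,A)$, for each internal level $m$ I set $S^{(m)}=X_{\bullet m}$, the simplicial set obtained by fixing the second index, and let $M^{(m)}=A_m\subseteq X_{1m}=(S^{(m)})_1$ be the level-$m$ part of $A$. Since $s_0\colon X_0\to X_1$ factors through $A$, the degenerate $1$-simplices of $S^{(m)}$ lie in $M^{(m)}$, so $(S^{(m)},M^{(m)})$ is a marked simplicial set in the sense of \cref{def:marked simplicial set}. As $m$ varies, the simplicial operators in the second index carry $A$ into $A$ because $A\hookrightarrow X_1$ is a map of spaces, so $m\mapsto(S^{(m)},M^{(m)})$ is a functor $\Delta^{op}\to\sset^+$, i.e. a simplicial object in $\sset^+$. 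On morphisms, a map $(X,A)\to(Y,B)$ in $\ss^+$ is exactly a map of simplicial spaces whose component at each internal level $m$ sends $M^{(m)}$ into the marking of $Y$; this is precisely a natural transformation of the associated simplicial objects.

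Next I would build the inverse $\Psi$. Given a simplicial object $W$ in $\sset^+$ with $W_m=(S^{(m)},M^{(m)})$, I define a bisimplicial set by $X_{nm}=(\Forget W_m)_n$, which is a simplicial space, and let $A\subseteq X_1$ be the subobject with $A_m=M^{(m)}$. Functoriality of $W$ makes the family $\{M^{(m)}\}_m$ compatible with the internal simplicial operators, so $A$ is genuinely a subspace of $X_1$; the marked-simplicial-set axiom for each $W_m$ guarantees that $s_0$ factors through $A$. Hence $(X,A)\in\ss^+$. It is then routine to verify that $\Phi$ and $\Psi$ are mutually inverse on objects and on morphisms, which yields the claimed equivalence.

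The main, and essentially only, obstacle is the bookkeeping of the marking across the two descriptions: in $\ss^+$ it is a single subspace $A\subseteq X_1$, whereas in $\Fun(\Delta^{op},\sset^+)$ it appears as a family $\{M^{(m)}\}_m$ of markings indexed by the internal level. The content of the argument is exactly that the condition ``$A$ is a subspace of $X_1$ and $s_0$ factors through $A$'' is equivalent, levelwise in $m$, to the conjunction of ``each $(S^{(m)},M^{(m)})$ is a marked simplicial set'' and ``the $M^{(m)}$ are natural in $m$''. Once the convention fixing which index is internal is pinned down, no further subtlety remains and no homotopical input is needed.
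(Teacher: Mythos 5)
Your proof is correct and takes essentially the same route as the paper: the paper states this lemma as an immediate consequence of comparing \cref{def:marked simplicial set} and \cref{def:marked simp space}, and your transposition bookkeeping (the marking $A\subseteq X_1$ redistributing into a natural family of level-$m$ markings $M^{(m)}\subseteq X_{1m}$, with the degeneracy and naturality conditions matching on both sides) is precisely that comparison written out in full. Note also that you correctly read the paper's $\Fun(\Delta,\sset^+)$ as the category of simplicial objects, i.e.\ functors out of $\Delta^{op}$, which is what is intended there.
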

  
  \begin{remone} \label{rem:marked simplicial spaces remarks}
  The equivalence immediately has following implications:
  \begin{enumerate}
   \item $\ss^+$ has small limits and colimits and presentable.
   \item $\ss^+$ is locally Cartesian closed, which for two objects $(X,A)$, $(Y,B)$ we denote by $(Y,B)^{(X,A)}$. 
   \item Similarly, we denote the generators of $\Fun(\Delta^+,\s)$ by $F^+(n) \times \Delta[l]$.
    Notice, here $n$ is again an object in $\Delta^+$, whereas $l$ is an object in $\Delta$. 
    \item There are adjunctions
  \begin{center}
   \begin{tikzcd}[column sep=1.3in]
    \ss^+ \arrow[r, "\Forget" description]  & \ss \arrow[l, bend right=30, "(-)^\flat"', "\bot"] \arrow[l, bend left=30, "(-)^\#", "\bot"']
   \end{tikzcd}
   .
  \end{center}
  Here $\Forget$ simply forgets the markings and, 
  for a given simplicial space $X$, we have $X^\flat= (X,X_0)$ and $X^\# = (X,X_1)$.
  \item Using $\Forget$ we can define a simplicial enrichment
  $$\Map_{\ss^+}((X,A),(Y,B)) = \Forget((Y,B)^{(X,A)})_0.$$
  \item Finally, we have 
   $$\Map_{\ss^+}(F(n)^{\flat}, (X,A)) \cong X_n,$$
  $$\Map_{\ss^+}(F(1)^\sharp, (X,A)) \cong A,$$
  for all marked simplicial spaces $(X,A)$ and $n \geq 0$.
  \end{enumerate}
  \end{remone}
 
 \subsection{Marked Joyal Model Structure vs. Marked CSS Model Structure}
 In this subsection we want to generalize the Joyal and complete Segal space model structures to marked simplicial sets and spaces. 
 The first step is to adjust the Joyal-Tierney adjunctions (\cref{the:jt t},\cref{the:jt ip}) to marked objects.
 
  Let 
  $$\iplus: \Delta^+ \to \Delta^+ \times \Delta$$
  be the map that takes an object $[n]$ to $([n],[0])$. 
  Similarly, let 
  $$\pplus: \Delta^+ \times \Delta \to \Delta^+$$
  be the functor that takes a pair $([n],[m])$ to $[n]$.

 \begin{defone} \label{def:pplus iplus}
  Let 
  \begin{center}
   \adjun{\Fun(\Delta^+,\set)}{\Fun(\Delta^+ \times \Delta, \set)}{(\pplus)^*}{(\iplus)^*}
  \end{center}
  be the adjunction induced by the maps $\iplus, \pplus$. 
 \end{defone}

 We claim this adjunction restricts to marked objects. 
 
 \begin{lemone} \label{lemma:pplus iplus marked}
  The functors $(\pplus)^*,(\iplus)^*$ take marked objects to marked object. 
  Hence, they restrict to an adjunction 
  \begin{center}
   \adjun{\sset^+}{\ss^+}{(\pplus)^*}{(\iplus)^*}.
  \end{center}
 \end{lemone}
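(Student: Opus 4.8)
The plan is to reduce both assertions to the description of marked objects as separated presheaves (\cref{lemma:marked simplicial set equal separated presheaf}), after which everything becomes a matter of tracking a single distinguished morphism. First I would make the two functors explicit. Since $(\pplus)^*$ and $(\iplus)^*$ are precomposition functors (\cref{def:pplus iplus}), for a presheaf $F$ on $\Delta^+$ and a presheaf $G$ on $\Delta^+ \times \Delta$ one computes $((\pplus)^*F)([n],[m]) = F([n])$ and $((\iplus)^*G)([n]) = G([n],[0])$. In other words $(\pplus)^*$ embeds a marked simplicial set as an object constant in the $\Delta$-direction, while $(\iplus)^*$ reads off the zeroth column in the $\Delta$-direction. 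These explicit formulas are what make the markedness bookkeeping transparent.

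Next I would record the markedness criterion in presheaf terms. By \cref{lemma:marked simplicial set equal separated presheaf}, a presheaf $F$ on $\Delta^+$ underlies a marked simplicial set exactly when the map $F([1^+]) \to F([1])$ induced by the cover $[1] \to [1^+]$ is a monomorphism; the degeneracy clause of \cref{def:marked simplicial set} is then automatic, being encoded in the factorization $[1] \to [1^+] \to [0]$ inside $\Delta^+$. Using the identification of $\ss^+$ with simplicial objects in $\sset^+$, and hence with the full subcategory of presheaves on $\Delta^+\times\Delta$ that are separated in the $\Delta^+$-variable at each level, a presheaf $G$ on $\Delta^+\times\Delta$ underlies a marked simplicial space precisely when $G([1^+],[m]) \to G([1],[m])$ is a monomorphism for every $[m]$; this is exactly the inclusion $A \hookrightarrow X_1$ of \cref{def:marked simp space}.

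The preservation statements now follow from how $\pplus$ and $\iplus$ treat the distinguished morphism $[1]\to[1^+]$. For $(\pplus)^*$, the morphism $([1]\to[1^+],\id_{[m]})$ maps under the projection $\pplus$ to $[1]\to[1^+]$, so the relevant map $((\pplus)^*F)([1^+],[m]) \to ((\pplus)^*F)([1],[m])$ is literally $F([1^+]) \to F([1])$, a monomorphism whenever $F$ is marked; hence $(\pplus)^*F \in \ss^+$. For $(\iplus)^*$, since $\iplus$ sends $[1]\to[1^+]$ to $([1]\to[1^+],\id_{[0]})$, the map $((\iplus)^*G)([1^+]) \to ((\iplus)^*G)([1])$ is the $[m]=[0]$ instance $G([1^+],[0]) \to G([1],[0])$, again a monomorphism whenever $G$ is marked; hence $(\iplus)^*G \in \sset^+$.

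Finally, for the ``hence'' I would invoke the formal principle that an adjunction restricts to full subcategories preserved by both functors. As $\sset^+$ and $\ss^+$ sit as full subcategories of the two presheaf categories, and we have just shown $(\pplus)^*$ lands in $\ss^+$ and $(\iplus)^*$ lands in $\sset^+$, the adjunction bijection of \cref{def:pplus iplus} restricts verbatim, and the unit and counit components are automatically morphisms of marked objects since their sources and targets already lie in the subcategories (indeed $(\iplus)^*(\pplus)^* = \id$, so the unit is the identity). I expect no genuine obstacle here; the only point requiring care is the identification of $\ss^+$ with the presheaves on $\Delta^+\times\Delta$ that are separated in the $\Delta^+$-variable, together with the observation that $\pplus$ and $\iplus$ carry the cover $[1]\to[1^+]$ to the product of itself with an identity, which is precisely what keeps the defining monomorphism intact.
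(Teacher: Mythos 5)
Your proof is correct and takes essentially the same route as the paper: both arguments reduce markedness to the separated-presheaf criterion of \cref{lemma:marked simplicial set equal separated presheaf} and verify by direct computation that the defining monomorphism is preserved, since $\pplus$ and $\iplus$ carry the cover $[1]\to[1^+]$ (paired with an identity) to itself, so that the relevant map for $(\pplus)^*F$ is literally $F([1^+])\to F([1])$ and for $(\iplus)^*G$ is $G([1^+],[0])\to G([1],[0])$. Your additional care with the formal restriction of the adjunction to the full subcategories, and the remark that $(\iplus)^*(\pplus)^*=\id$, are points the paper leaves implicit but do not constitute a different approach.
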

 
 \begin{proof}
  Let $G: (\Delta^+)^{op} \to \set$ be a presheaf that corresponds to a marked simplicial set, meaning the map $G(1^+) \to G(1)$ is a monomorphism. 
  We need to show that $(\pplus)^*(G)(1^+,l) \to (\pplus)^*(G)(1,l)$ is a monomorphism as well. 
  However, by direct computation $(\pplus)^*(G)(1^+,l) \to (\pplus)^*(G)(1,l) = G(1^+) \to G(1)$ and so the result follows from our assumption.
  
  \medskip 
  
  On the other hand, assume $G: (\Delta^+)^{op} \times \Delta^{op} \to \set$ be a presheaf that corresponds to marked simplicial space. 
  We want to prove $(\iplus)^*(G): (\Delta^+)^{op}  \to \set$ corresponds to a marked simplicial set, meaning we have to show 
  $(\iplus)^*(G)(1^+) \to (\iplus)^*(G)(1)$ is a monomorphism. By direct computation, this map is given by $G(1^+,0) \to G(1,0)$, which is a 
  monomorphism by assumption. 
 \end{proof}
 
 We now move on to adjust the second adjunction, $(t_!,t^!)$, to the marked setting. 

 Let 
 $$\tau: \Delta^+ \to \sset^+$$
 be the marked simplicial diagram defined as 
 $$\tau([n]) = 
 \begin{cases}
  \Delta[n]^{\flat} & \text{ if } n \neq 1^+ \\
  \Delta[1]^\sharp & \text{ if } n = 1^+ 
 \end{cases},
 $$
 where the cosimplicial maps between $\Delta[n]^\flat$ are given by the cosimplicial diagram $\Delta$ in $\sset$ and the map 
 $\Delta[1]^\flat \to \Delta[1]^\sharp$ is the evident inclusion map.
 
 We can depict the marked cosimplicial object $\tau$ in $\sset^+$ as
 
  \begin{center}
   \begin{tikzcd}[row sep=0.5in, column sep=0.5in]
   \Delta \leb 0 \reb^\flat 
   \arrow[r, shift left=1.2] \arrow[r, shift right=1.2]    
   \arrow[dr, shift left=1.2] \arrow[dr, shift right=1.2]   
   & \Delta \leb 1 \reb^\flat \arrow[d]
   \arrow[l, shorten >=1ex,shorten <=1ex]
   \arrow[r] \arrow[r, shift left=2] \arrow[r, shift right=2] 
   & \Delta \leb 2 \reb^\flat
   \arrow[l, shift right, shorten >=1ex,shorten <=1ex ] \arrow[l, shift left, shorten >=1ex,shorten <=1ex] 
   \arrow[r, shift right=1] \arrow[r, shift left=1] \arrow[r, shift right=3] \arrow[r, shift left=3] 
   & \cdots 
   \arrow[l, shorten >=1ex,shorten <=1ex] \arrow[l, shift left=2, shorten >=1ex,shorten <=1ex] \arrow[l, shift right=2, shorten >=1ex,shorten <=1ex]
   \\
   & \Delta \leb 1 \reb^\# \arrow[ul, shorten >=1ex,shorten <=1ex] & 
 \end{tikzcd}.
 \end{center}
 
 Using $\tau$ we can now define a functor 
 
 $$\tplus: \Delta^+ \times \Delta \to \sset^+$$
 
 by $\tplus([n],[m]) = \tau(n) \times \Delta[n]^\#$. We can extend this functor to an adjunction.
 
 \begin{defone} \label{def:tplus}
  Let 
  \begin{center}
   \adjun{\Fun((\Delta^+)^{op} \times \Delta^{op}, \set)}{\sset^+}{(\tplus)_!}{(\tplus)^!}
  \end{center} 
  be the adjunction given by left Kan extension along $\tplus$.
 \end{defone}
 
 The right hand side of the adjunction already takes value in marked simplicial sets, so we only need to prove that the 
 right adjoint takes value in marked simplicial spaces as well.
 
 \begin{lemone} \label{lemma:t shriek adjunction}
  The adjunction $((\tplus)_!,(\tplus)^!)$ restricts to an adjunction
    \begin{center}
   \adjun{\ss^+}{\sset^+}{(\tplus)_!}{(\tplus)^!}
  \end{center}
 \end{lemone}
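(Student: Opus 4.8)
The plan is to mirror the strategy of \cref{lemma:pplus iplus marked} and reduce the whole statement to a single separatedness check on the right adjoint. The left adjoint $(\tplus)_!$ needs no adjustment at all: its codomain is already $\sset^+$, so its restriction along the full inclusion $\ss^+\hookrightarrow\Fun((\Delta^+)^{op}\times\Delta^{op},\set)$ automatically lands in $\sset^+$. By \cref{lemma:marked simplicial set equal separated presheaf}, $\ss^+$ is precisely the full subcategory of those presheaves $G$ for which $G(1^+,m)\to G(1,m)$ is a monomorphism for every $m$ (the separatedness condition imposed in the $\Delta^+$-direction, levelwise in $\Delta$). Hence it suffices to prove that $(\tplus)^!$ takes each marked simplicial set into this subcategory. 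Granting that, the adjunction restricts formally: fullness of $\ss^+$ identifies the Hom-set $\Hom(G,(\tplus)^!Y)$ of the presheaf category with $\Hom_{\ss^+}(G,(\tplus)^!Y)$ whenever $G\in\ss^+$, so the adjunction isomorphism $\Hom_{\sset^+}((\tplus)_!G,Y)\cong\Hom(G,(\tplus)^!Y)$ becomes exactly the desired adjunction between $\ss^+$ and $\sset^+$.

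To carry out the check I would compute $(\tplus)^!$ by the usual formula for the right adjoint of a left Kan extension,
\[
(\tplus)^!(Y)(n,m)\;=\;\Hom_{\sset^+}\big(\tplus([n],[m]),\,Y\big)\;=\;\Hom_{\sset^+}\big(\tau(n)\times\Delta[m]^\sharp,\,Y\big).
\]
The map whose injectivity is at issue, namely $(\tplus)^!(Y)(1^+,m)\to(\tplus)^!(Y)(1,m)$, is induced by the cover $[1]\to[1^+]$ of $\Delta^+$. This morphism acts only on the $\tau$-coordinate, where $\tau$ sends it to the inclusion $\Delta[1]^\flat\to\Delta[1]^\sharp$, the identity on underlying simplicial sets that adjoins the non-degenerate edge to the markings. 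Consequently the map in question is $\Hom_{\sset^+}(-,Y)$ applied to
\[
\Delta[1]^\flat\times\Delta[m]^\sharp\;\longrightarrow\;\Delta[1]^\sharp\times\Delta[m]^\sharp .
\]

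The crux is the observation that this last comparison is an \emph{epimorphism} in $\sset^+$: it is the identity on underlying simplicial sets and merely enlarges the markings, and since a morphism of marked simplicial sets is determined by its underlying map, any two morphisms out of $\Delta[1]^\sharp\times\Delta[m]^\sharp$ that agree after this precomposition must coincide. Because $\Hom_{\sset^+}(-,Y)$ converts epimorphisms into monomorphisms, the induced map $(\tplus)^!(Y)(1^+,m)\to(\tplus)^!(Y)(1,m)$ is a monomorphism for every $m$, which is exactly the separatedness that places $(\tplus)^!(Y)$ in $\ss^+$. I expect the genuinely substantive point to be this epimorphism step together with correctly tracking the variance of $[1]\to[1^+]$; the remaining ingredients are the formal restriction of an adjunction and the Kan extension formula, both of which are routine.
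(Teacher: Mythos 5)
Your proposal is correct, and its skeleton coincides with the paper's: the left adjoint restricts for free since its codomain is already $\sset^+$, and everything reduces to showing that $(\tplus)^!(X,A)$ is separated, i.e.\ that the map
$$\Hom_{\sset^+}(\Delta[1]^\sharp \times \Delta[l]^\sharp, (X,A)) \to \Hom_{\sset^+}(\Delta[1]^\flat \times \Delta[l]^\sharp, (X,A))$$
induced by precomposition is injective for all $l$. Where you genuinely diverge is in the justification of this one injectivity. The paper transposes via the Cartesian closedness of $\sset^+$ (\cref{rem:marked simplicial sets presentable}) to rewrite the map as
$$\Hom_{\sset^+}(\Delta[1]^\sharp, (X,A)^{\Delta[l]^\sharp}) \to \Hom_{\sset^+}(\Delta[1]^\flat, (X,A)^{\Delta[l]^\sharp}),$$
and then uses \cref{rem:generators marked simplicial sets yoneda} to identify the target with the set of $1$-simplices of the exponential and the source with its subset of marked ones, so injectivity holds by the very definition of a marked simplicial set. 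You instead observe that the map being precomposed, $\Delta[1]^\flat \times \Delta[l]^\sharp \to \Delta[1]^\sharp \times \Delta[l]^\sharp$, is an epimorphism of $\sset^+$ (it is the identity on underlying simplicial sets, and a morphism of marked simplicial sets is determined by its underlying map), and that $\Hom_{\sset^+}(-,(X,A))$ turns epimorphisms into monomorphisms. Both arguments are valid and ultimately rest on the same fact, namely that markings are a property of edges rather than extra structure on morphisms; yours is marginally more economical in that it never invokes the internal hom, while the paper's route has the side benefit of explicitly describing the spaces and markings of $(\tplus)^!(X,A)$ in terms of the exponentials $(X,A)^{\Delta[l]^\sharp}$, a style of computation that the paper reuses later (for instance when identifying $(\tplus)^!$ of fibrant objects in the proof of \cref{the:absolute Cartesian model structure on marked simplicial spaces}).
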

 
 \begin{proof}
  Let $(X,A)$ be a marked simplicial set. Then 
  $$(\tplus)^!(X,A)_{nl}= \Hom_{\sset^+}(\tau(n) \times \Delta[l]^\#, (X,A)) \cong \Hom_{\sset^+}(\tau(n), (X,A)^{\Delta[l]^\#}),$$
  where the isomorphism follows from the fact that $\sset^+$ is Cartesian closed (\cref{rem:marked simplicial sets presentable}).
  
  In order to show that $(\tplus)^!(X,A)$ is a marked simplicial space we have to prove that map 
  $$\Hom_{\sset^+}(\tau(1^+) \times \Delta[l]^\#, (X,A)) \to \Hom_{\sset^+}(\tau(1) \times \Delta[l]^\#, (X,A))$$
  is an injection for all $l \geq 0$.
  By direct computation this map is given by 
  $$\Hom_{\sset^+}(\Delta[1]^\sharp \times \Delta[l]^\#, (X,A)) \to \Hom_{\sset^+}(\Delta[1]^\flat \times \Delta[l]^\#, (X,A)).$$
  By the isomorphism above, this is isomorphic to  
  $$\Hom_{\sset^+}(\Delta[1]^\sharp, (X,A)^{\Delta[l]^\#}) \to \Hom_{\sset^+}(\Delta[1]^\flat, (X,A)^{\Delta[l]^\#}).$$
  Finally, by \cref{rem:generators marked simplicial sets yoneda}, 
  the right hand side is the set of $1$-simplices in $(X,A)^{\Delta[l]^\#}$, whereas the left hand side 
  are the marked ones. Hence, this map is an injection by assumption.
 \end{proof}
 
 \begin{remone} \label{rem:different t definitions}
  Recall that $t: \Delta \times \Delta \to \sset$ was defined as $t([n],[m]) = \Delta[n] \times J[m]$ (\cref{the:jt t}). 
  Hence our first guess for a
  generalization to marked simplicial spaces might have been $t'([n],[m]) = \tau([n]) \times J[m]^\flat$. 
  
  This functor would in fact give us an adjunction between marked simplicial spaces and marked simplicial sets and we could use 
  that to construct Quillen equivalences. 
  However, in \cref{the:marked Joyal} we will observe that the model structure of interest on $\sset^+$, the marked Joyal structure, 
  is in fact a simplicial model structure, and  
  that the simplicial enrichment makes $((\tplus)_!, (\tplus)^!)$ into a simplicial Quillen equivalence 
  (\cref{the:absolute Cartesian model structure on marked simplicial spaces}). 
  If we used $((t')_!,(t')^!)$ instead we would not get a simplicial Quillen equivalence.
  
  On the other hand, we do in fact use the fact that $((\tplus)_!, (\tplus)^!)$ is simplicial, particularly 
  in \cref{the:absolute Cartesian model structure on marked simplicial spaces} and \cref{the:cart model structure marked simp spaces},
  to construct our desired model structures on marked simplicial spaces (we need it to apply \cref{the:localizing adjunctions}).
  Hence we have chosen $\tplus$ as our generalization $t$.
  
  More generally, we note that even without any particular application in mind, having a simplicial adjunction of simplicial model structures implies that we immediately get an adjunction of $\infty$-categories between the underlying $\infty$-categories of the simplicial model structures, as proven in \cite[Proposition 5.2.4.6]{lurie2009htt}, giving additional justification for our choice of generalization.
 \end{remone}
 
 \begin{remone} \label{rem:tplus is enriched}
  Notice the adjunctions $((\tplus)_!,(\tplus)^!)$ and $(t_!,t^!)$ do not give us a commutative square.
  Thus our choice of prioritizing a simplicial Quillen equivalence (as explained in \cref{rem:different t definitions}) comes at the price of not 
  commuting with the original adjunction $(t_!,t^!)$.
 \end{remone}

 We now want to move on and prove that the adjunctions $((\pplus)^*,(\iplus)^*)$ and $((\tplus)_!,(\tplus)^!)$ 
 do in fact give us a Quillen equivalence, similar to the unmarked counter-parts.  
 
 For that we first have to construct appropriate model structures, which we will do in two steps: 
 \begin{enumerate}
  \item First we define a model structure $\ss^+$ which is transferred from a model structure on $\ss$ and 
  does not take the markings into account (\cref{prop:transferred model structure on marked simplicial spaces}).
  \item Then we localize this model structure such that the fibrancy condition depends on an appropriate choice of marking
  (\cref{the:absolute Cartesian model structure on marked simplicial spaces}).
 \end{enumerate}

 \begin{propone} \label{prop:transferred model structure on marked simplicial spaces}
  Let $\mathcal{L}$ be a set of cofibrations of simplicial spaces. 
  There is a unique combinatorial simplicial left proper 
  model structure on the category of marked simplicial spaces, called the {\bf unmarked $\mathcal{L}$-localized model structure} 
  and denoted by 
  $(\ss^+)^{un^+Ree_{\mathcal{L}}}$, with the following specifications:
  \begin{enumerate}
   \item[F] A map $(X,A) \to (Y,B)$ is a fibration if the underlying map of simplicial spaces $X \to Y$ is a 
   fibration in the $\mathcal{L}$ localized Reedy model structure.
   \item[W] A map $(X,A) \to (Y,B)$ is a weak equivalence if the underlying map of simplicial spaces $X \to Y$ is a 
   weak equivalence in the $\mathcal{L}$-localized Reedy model structure.
    \item[C] A map of marked simplicial spaces is a cofibration if it has the left lifting property with respect to 
    maps that are simultaneously fibrations and weak equivalences. 
  \end{enumerate}
  Moreover, the adjunction 
  \begin{center}
   \adjun{\ss^{Ree_{\mathcal{L}}}}{(\ss^+)^{un^+Ree_{\mathcal{L}}}}{(-)^\flat}{\Forget}.
  \end{center}
  is a Quillen equivalence. Here the left hand side has the $\L$-localized model structure (\cref{the:bousfield localization}) and the right side 
  the unmarked $\L$-localized model structure.
 \end{propone}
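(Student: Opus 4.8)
The plan is to recognize the desired structure as the \emph{right-induced} model structure obtained by transferring $\ss^{Ree_{\mathcal{L}}}$ along the forgetful functor $\Forget\colon \ss^+ \to \ss$, and then to observe that the resulting Quillen adjunction $((-)^\flat,\Forget)$ is a Quillen equivalence essentially by construction. Concretely, I would apply \cref{the:right induced} to the adjunction $((-)^\flat,\Forget)$, taking $\D^\M = \ss^{Ree_{\mathcal{L}}}$, $\C = \ss^+$, $F = (-)^\flat$ and $G = \Forget$. The three specifications F, W, C are then exactly the defining features of the right-induced structure, and its existence and uniqueness together with the properties combinatorial, simplicial and left proper are all supplied by that theorem.

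First I would verify the four hypotheses of \cref{the:right induced}. Hypothesis (1), that $\ss^+$ is a simplicial locally presentable category, is recorded in \cref{rem:marked simplicial spaces remarks}. Hypothesis (2), that $\ss^{Ree_{\mathcal{L}}}$ is simplicial, left proper and combinatorial, follows by Bousfield-localizing (\cref{the:bousfield localization}) the Reedy model structure of \cref{the:reedy model structure} along $\mathcal{L}$. Hypothesis (3), that $\Forget$ preserves filtered colimits, is immediate since $\Forget$ is itself a left adjoint, namely the left adjoint of $(-)^\#$ in the chain $(-)^\flat \dashv \Forget \dashv (-)^\#$ of \cref{rem:marked simplicial spaces remarks}, and hence preserves all colimits.

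Then comes hypothesis (4), the functorial fibrant replacement, which is the one piece requiring a small construction and is the main (though mild) obstacle. The key observation is that the candidate fibrations and weak equivalences ignore the markings, so fibrancy of $(X,A)$ ought to depend only on $X$. Thus, starting from a functorial fibrant replacement $X \rightarrowtail LX$ in $\ss^{Ree_{\mathcal{L}}}$ (which exists since that structure is combinatorial), I would set $R(X,A) := (LX)^\#$, the simplicial space $LX$ with all $1$-simplices marked. The natural map $(X,A) \to (LX)^\#$ is well defined because every edge of the target is marked, it is functorial in $(X,A)$ because $L$ is, and applying $\Forget$ returns precisely $X \rightarrowtail LX$, a fibrant replacement in $\ss^{Ree_{\mathcal{L}}}$. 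This verifies (4); any marking containing the image of $A$ (and the degeneracies) would serve equally well, the maximal choice being merely the most economical to describe. By \cref{the:right induced} the right-induced model structure now exists with the stated F, W, C, and $\Forget$ is right Quillen, so $((-)^\flat,\Forget)$ is a Quillen adjunction.

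To upgrade this to a Quillen equivalence I would argue directly rather than through derived (co)units. The unit of $(-)^\flat \dashv \Forget$ at a simplicial space $X$ is the identity, since $\Forget(X^\flat) = \Forget(X,X_0) = X$. Consequently, for cofibrant $X \in \ss^{Ree_{\mathcal{L}}}$ (every object is cofibrant, as the cofibrations are the monomorphisms) and fibrant $(Y,B) \in (\ss^+)^{un^+Ree_{\mathcal{L}}}$, the adjunct of any map $f\colon X^\flat \to (Y,B)$ is simply $\Forget(f)\colon X \to Y$. By specification W, $f$ is a weak equivalence in $(\ss^+)^{un^+Ree_{\mathcal{L}}}$ if and only if $\Forget(f)$ is a weak equivalence in $\ss^{Ree_{\mathcal{L}}}$, that is, if and only if its adjunct is a weak equivalence. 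This is exactly the criterion for a Quillen equivalence, which completes the argument.
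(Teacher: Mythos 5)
Your proposal is correct and follows essentially the same route as the paper: both right-induce the model structure along $\Forget$ via \cref{the:right induced}, verify the hypotheses identically (including the same fibrant replacement functor $(X,A) \mapsto (L\Forget(X,A))^\#$), and then observe the Quillen equivalence is essentially automatic because the unit of $(-)^\flat \dashv \Forget$ is the identity and $\Forget$ creates the weak equivalences. The only cosmetic difference is that you verify the equivalence via the adjunct criterion on cofibrant--fibrant pairs, whereas the paper invokes the ``right adjoint reflects weak equivalences plus derived unit'' formulation; these are interchangeable here.
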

 
 \begin{proof}
  In order to construct a model structure on $\ss^+$ we want to prove that we can use the adjunction 
  \begin{equation} \label{eq:first equation}
   \adjun{\ss^{Ree_{\mathcal{L}}}}{\ss^+}{(-)^\flat}{\Forget}
  \end{equation}
  to induce the $\mathcal{L}$-localized Reedy model structure via the right adjoint $\Forget$. This would immediately give us most of the 
  desired results, up to including that the adjunction is a Quillen adjunction, leaving us only with the task of showing 
  it is a Quillen equivalence. 
  
  We will hence check the required conditions of right-induced model structures (\cref{the:right induced}):
  
  \begin{itemize}
   \item {\it $\Forget$ commutes with filtered colimits}: This follows from the fact that $\Forget$ is a left adjoint 
   (\cref{rem:marked simplicial spaces remarks}).
   \item {\it $\ss^+$ is simplicial}: Also proven in \cref{rem:marked simplicial spaces remarks}.
   \item {\it There is a functorial fibrant replacement functor in $\ss^+$}: 
   Let $R$ be a fibrant replacement functor in $\ss^{Ree_{\mathcal{L}}}$. Then the functor 
   $$\ss^+ \xrightarrow{ \ \Forget \ } \ss \xrightarrow{ \ R \ } \ss \xrightarrow{ \ (-)^\sharp \ } \ss$$
   is in fact a fibrant replacement functor in the sense of \cref{the:right induced}. Indeed, for a given marked simplicial space $(X,A)$ the map $\Forget(X,A) \to \Forget (R \Forget(X,A)^\sharp)$ is precisely $X \to RX$, which by definition is the fibrant replacement in the $\mathcal{L}$-localized Reedy model structure.
  \end{itemize}
  
  We are now left with proving that the Quillen adjunction (\ref{eq:first equation}) is in fact a Quillen equivalence. 
  The right adjoint reflects weak equivalences by definition and so 
  it suffices to prove the derived unit map is an equivalence for every fibrant simplicial space, however, that is just the identity. 
  Hence we are done.
  \end{proof}  
 
 We can now use this result for particular interesting instances.
 
 \begin{corone} \label{cor:marked Reedy}
  If $\mathcal{L}$ is empty, then we get the unmarked Reedy model structure, $(\ss^+)^{un^+Ree}$, on marked simplicial spaces. 
 \end{corone}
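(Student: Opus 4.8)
The plan is to recognize the corollary as the specialization of \cref{prop:transferred model structure on marked simplicial spaces} to the empty localizing set, so the only real content is the identification $\ss^{Ree_{\emptyset}} = \ss^{Ree}$. Once this identification is in hand, the claim is immediate: the fibrations and weak equivalences of $(\ss^+)^{un^+Ree_{\emptyset}}$ are by definition the maps whose underlying map of simplicial spaces is a fibration (respectively weak equivalence) in $\ss^{Ree_{\emptyset}} = \ss^{Ree}$, which is precisely the description of the unmarked Reedy model structure.

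First I would invoke \cref{the:bousfield localization} with $\mathcal{L} = \emptyset$. The $\emptyset$-local objects are those simplicial spaces $W$ that are Reedy fibrant and satisfy the localization condition against every map in $\mathcal{L}$; since $\mathcal{L}$ is empty this latter condition is vacuous, so the $\emptyset$-local objects are exactly the Reedy fibrant simplicial spaces. Next I would check the weak equivalences: a map $Y \to Z$ is an $\emptyset$-local weak equivalence if $\Map_{\ss}(Z,W) \to \Map_{\ss}(Y,W)$ is a Kan equivalence for every $\emptyset$-local $W$, i.e.\ for every Reedy fibrant $W$. Since the cofibrations are unchanged under localization, it only remains to note that this mapping-space condition detects precisely the Reedy weak equivalences.

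The main (and only) obstacle is this last identification of weak equivalences, and it is minor: it is the standard fact that in a left proper combinatorial simplicial model category, weak equivalences are exactly those maps inducing Kan equivalences on homotopy function complexes into all fibrant objects. Applying this to $\ss^{Ree}$ shows that the $\emptyset$-local weak equivalences coincide with the Reedy weak equivalences, so that $\ss^{Ree_{\emptyset}} = \ss^{Ree}$ as model structures. Substituting this equality into the statement of \cref{prop:transferred model structure on marked simplicial spaces} then yields exactly the unmarked Reedy model structure $(\ss^+)^{un^+Ree}$ on marked simplicial spaces, completing the argument.
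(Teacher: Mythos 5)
Your proposal is correct and follows exactly the route the paper intends: the paper states this corollary without proof, treating it as the immediate specialization of \cref{prop:transferred model structure on marked simplicial spaces} to $\mathcal{L} = \emptyset$, and your argument simply fills in the (standard) verification that the empty Bousfield localization of $\ss^{Ree}$ is $\ss^{Ree}$ itself, using that all objects are cofibrant and that weak equivalences in a simplicial model category are detected by mapping spaces into fibrant objects. Nothing is missing; this is the same approach, made explicit.
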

 
 \begin{corone} \label{cor:marked CSS}
  If we take $\mathcal{L} = \Seg \cup \Comp$ (the Segal maps and completeness maps as defined in  \cref{def:segal and completeness maps}), 
  then we get the {\it unmarked complete Segal space model structure}, denoted $(\ss^+)^{un^+CSS}$. 
 \end{corone}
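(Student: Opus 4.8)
The plan is to recognize \cref{cor:marked CSS} as a direct specialization of \cref{prop:transferred model structure on marked simplicial spaces}, so the work reduces to checking the single hypothesis and then identifying the resulting localized Reedy structure. First I would verify that $\Seg \cup \Comp$ is a set of cofibrations of simplicial spaces, as the proposition requires: each Segal map $G(n) \hookrightarrow F(n)$ is by construction a monomorphism, and the completeness map $F(0) \to E(1)$ is also a monomorphism (it picks out a single object in the nerve $E(1)$ of the walking isomorphism), so every map in $\Seg \cup \Comp$ is a Reedy cofibration.

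With this checked, I would apply \cref{prop:transferred model structure on marked simplicial spaces} with $\mathcal{L} = \Seg \cup \Comp$. This immediately yields a combinatorial, simplicial, left proper model structure $(\ss^+)^{un^+Ree_{\Seg \cup \Comp}}$ on marked simplicial spaces, whose fibrations and weak equivalences are exactly those maps whose underlying simplicial-space maps are fibrations (resp. weak equivalences) in the $(\Seg \cup \Comp)$-localized Reedy model structure, together with a Quillen equivalence $((-)^\flat, \Forget)$ from $\ss^{Ree_{\Seg \cup \Comp}}$ to this transferred structure.

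The only remaining step is to identify $\ss^{Ree_{\Seg \cup \Comp}}$ with the complete Segal space model structure $\ss^{CSS}$ of \cref{the:css model structure}, after which I would simply rename the transferred structure $(\ss^+)^{un^+CSS}$. This identification is a direct comparison of defining data: the localized model structure of \cref{the:bousfield localization} shares its cofibrations (monomorphisms) with the Reedy structure, its fibrant objects are the Reedy-fibrant $W$ for which $\Map_{\ss}(B,W) \to \Map_{\ss}(A,W)$ is a Kan equivalence for all $A \to B$ in $\Seg \cup \Comp$, and its weak equivalences are detected by mapping into these local objects. By \cref{def:complete Segal Spaces} the local objects are precisely the complete Segal spaces, so all three classes of maps coincide with those of \cref{the:css model structure}, and the uniqueness clause in \cref{the:bousfield localization} forces the two structures to agree. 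I do not anticipate any genuine obstacle here: the corollary is forced once the maps in $\Seg \cup \Comp$ are seen to be cofibrations, and the sole point deserving a moment's care is the elementary verification that the completeness map is a monomorphism.
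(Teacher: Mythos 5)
Your proposal is correct and follows exactly the route the paper intends: the corollary is stated without proof precisely because it is the specialization of \cref{prop:transferred model structure on marked simplicial spaces} to $\mathcal{L} = \Seg \cup \Comp$, combined with the standard identification (via the uniqueness clauses) of the $(\Seg \cup \Comp)$-localized Reedy model structure with $\ss^{CSS}$ from \cref{the:css model structure}. Your two routine verifications --- that the maps in $\Seg \cup \Comp$ are monomorphisms and that the local objects are exactly the complete Segal spaces of \cref{def:complete Segal Spaces} --- are the right details to spell out, and nothing further is needed.
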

  
   The reason the model structure is called ``unmarked" is that the fibrancy condition is independent of the markings.
  This has very problematic implications, such as the fact that even some common marked simplicial spaces are not cofibrant anymore.
  For example the diagram   
  \begin{center}
   \begin{tikzcd}[row sep=0.5in, column sep=0.5in]
    & F(1)^\flat \arrow[d] \\
    F(1)^\sharp \arrow[ur, dashed, "\nexists" description] \arrow[r, "\id"] & F(1)^\sharp
   \end{tikzcd}
  \end{center}
  has no lift, although right hand map is a trivial fibration in the $\mathcal{L}$-localized marked Reedy model structure. 
  Hence $F(1)^{\sharp}$ is not cofibrant. 
  
  We thus want modify the model structure, by restricting the fibrations and trivial fibrations
  and making it dependent on the markings.
  In order to do that we want to compare the unmarked CSS model structure with the {\it marked Joyal model structure}.
  
  \begin{theone} \label{the:marked Joyal}
   \cite[Proposition 3.1.3.7, Proposition 3.1.4.1, Corollary 3.1.4.4, Proposition 3.1.5.3]{lurie2009htt}
   There is a unique combinatorial, simplicial, left proper model structure on $\sset^+$, called the 
   marked Joyal model structure and denoted $\sset^{Joy^+}$, with the following specifications:
   \begin{enumerate}
    \item A map of marked simplicial sets $f: (S,A) \to (T,B)$ is a cofibration if the underlying map of simplicial sets 
    $S \to T$ is a monomorphism.
   \item  A map of marked simplicial sets $f: (S,A) \to (T,B)$ is a weak equivalence if the underlying map of simplicial sets 
    $S \to T$ is an equivalence in the Joyal model structure.
   \item A marked simplicial set $(X,A)$ is fibrant if $X$ is a quasi-category and $A$ is the set of equivalences in $X$.
   \item The simplicial enrichment is given by the simplicial set 
   $$\Map_{\sset^+}(S,T) = \Hom_{\sset^+}(S \times \Delta[n]^\sharp, T)$$
   \end{enumerate}
   Finally, the adjunction 
   \begin{center}
    \adjun{\sset^{Joy}}{(\sset^+)^{Joy^+}}{(-)^{\flat}}{\Forget}
   \end{center}
   is a Quillen equivalence between the Joyal model structure and the marked Joyal model structure.
  \end{theone}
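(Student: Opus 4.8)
The plan is to follow Lurie's construction \cite{lurie2009htt}, adapting the two-step philosophy the paper uses on the complete Segal side, but with one essential caveat: the Joyal model structure is not simplicial (\cref{rem:t joyal enriched}), so the clean right-induction machinery of \cref{the:right induced} — which succeeds for simplicial spaces precisely because $\ss^{CSS}$ is simplicial — is unavailable here, and the simplicial structure must be produced by hand. I would therefore realize $(\sset^+)^{Joy^+}$ directly as a cofibrantly generated localization. Since $\sset^+$ is combinatorial and locally Cartesian closed (\cref{rem:marked simplicial sets presentable}), I take the cofibrations to be the monomorphisms, generated by the set $\{\partial \Delta[n]^\flat \hookrightarrow \Delta[n]^\flat : n \geq 0\} \cup \{\Delta[1]^\flat \hookrightarrow \Delta[1]^\sharp\}$, declare the intended fibrant objects to be exactly the pairs $(X,A)$ with $X$ a quasi-category and $A$ its set of equivalences, and take the weak equivalences to be the maps $f$ for which $\Map_{\sset^+}(f,Z)$ is a Kan equivalence for every such fibrant $Z$, where $\Map_{\sset^+}(S,T)_n = \Hom_{\sset^+}(S \times \Delta[n]^\sharp, T)$ is the enrichment from condition (4).

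The first main step is existence. I would produce the set of generating trivial cofibrations as Lurie's \emph{marked anodyne} maps — the inner horn inclusions, the marked outer-horn maps that force a marked edge to become invertible, and the maps imposing the two-out-of-three property on marked edges — and then verify the axioms of a combinatorial model structure through the recognition theorem underlying \cref{the:bousfield localization}, checking that the weak equivalences above form an accessible class closed under two-out-of-three and retracts whose trivial cofibrations are generated by the marked anodyne set. The fibrant objects are then identified by a lifting analysis against these generators: lifting against inner horns forces $X$ to be a quasi-category, while lifting against the invertibility and two-out-of-three generators forces $A$ to consist of exactly the equivalences of $X$. This is where the markings genuinely enter and is the combinatorial heart of the argument. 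Simpliciality is the second delicate point: although plain Joyal fails the simplicial axiom, tensoring with the \emph{fully marked} objects $\Delta[n]^\sharp$ rather than $\Delta[n]^\flat$ makes the enriched mapping objects into Kan complexes — morally the groupoid cores of the mapping $\infty$-categories — and I would verify the pushout-product axiom for $- \times \Delta[n]^\sharp$ against the generating cofibrations, using that $\Delta[n]^\sharp$ is built from walking-isomorphism data and so interacts well with marked anodyne maps.

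The final step is the Quillen equivalence $(-)^\flat \dashv \Forget$ (\cref{rem:unmarking sharp flat}). The left adjoint $(-)^\flat$ sends Joyal cofibrations, i.e.\ monomorphisms, to monomorphisms and hence to cofibrations; it is left Quillen once one checks it also preserves trivial cofibrations, which reduces to showing that flat-marking a Joyal equivalence yields a marked-Joyal weak equivalence — immediate from the mapping-space characterization, since flat objects only probe the underlying categorical homotopy type. To upgrade to a Quillen equivalence I would use the fully-faithful-derived-right-adjoint criterion in the spirit of \cref{the:localizing adjunctions}(3): $\Forget$ reflects weak equivalences between fibrant objects, and for every simplicial set $S$ the derived unit $S \to \Forget(R(S^\flat))$ is a Joyal equivalence, where $R$ denotes marked fibrant replacement. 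The content of this last claim is that the underlying quasi-category of $R(S^\flat)$ is a Joyal fibrant replacement of $S$, which again follows from the marked anodyne theory because flat-to-fibrant marked anodyne maps have inner-anodyne underlying maps.

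The main obstacle is exactly the part I have had to black-box, namely the marked anodyne analysis that simultaneously produces the generating trivial cofibrations, pins down the fibrant objects as those marking \emph{precisely} the equivalences, and rescues the simplicial structure through the $\Delta[n]^\sharp$-tensoring. None of this follows formally from the paper's right-induction or localization theorems, since those presuppose a simplicial target whereas the ambient homotopy theory here is the non-simplicial Joyal structure (\cref{the:joyal model structure}, \cref{rem:t joyal enriched}). This is precisely why the statement is imported from \cite{lurie2009htt} rather than reproved from the paper's own toolkit, and why the paper's genuine contributions live on the complete Segal space side, where the analogous structures do follow cleanly from \cref{the:right induced} and \cref{the:localizing adjunctions}.
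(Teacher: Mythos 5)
Your framing of the situation is accurate: the paper does not prove this theorem at all — it is imported wholesale from \cite{lurie2009htt} (hence the string of citations in the statement), and your diagnosis of \emph{why} it cannot be produced by the paper's own toolkit (\cref{the:right induced} and \cref{the:localizing adjunctions} both presuppose a simplicial ambient model structure, which the Joyal structure is not) matches the paper's design. So the only proof to compare against is Lurie's, and there your sketch contains a step that is not merely black-boxed but false: you assert that the trivial cofibrations are generated by the marked anodyne maps and feed that into the recognition theorem. They are not. Consider the inclusion $\Delta[0] \to J[1]^\flat$. It is a monomorphism and a weak equivalence in your (and Lurie's) mapping-space sense: for fibrant $(Z,Z_{hoequiv})$ the space $\Map_{\sset^+}(J[1]^\flat,(Z,Z_{hoequiv}))$ is the largest Kan complex contained in $Z^{J[1]}$, and since $J[1] \to \Delta[0]$ is a Joyal equivalence the restriction map to $\Map_{\sset^+}(\Delta[0],(Z,Z_{hoequiv}))$, the largest Kan complex in $Z$, is a homotopy equivalence. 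On the other hand, this map has the right lifting property against \emph{every} marked anodyne map: the only marked edges of $J[1]^\flat$ are degenerate, and every edge of $J[1]$ with equal endpoints is degenerate, so each lifting problem against the generating families forces the bottom map to be constant at the base vertex, and the constant lift works. If the marked anodyne maps generated the trivial cofibrations, this map would be simultaneously a trivial cofibration and a fibration, hence would lift against itself and be an isomorphism — which it is not. So the construction as you describe it cannot be carried out; this is exactly the marked incarnation of the familiar fact that inner anodyne maps do not generate the Joyal trivial cofibrations, and it is why the fibrations of this model structure have no explicit description away from fibrant objects.

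What Lurie actually does (the proof of Proposition 3.1.3.7 of \cite{lurie2009htt}) is apply Smith's recognition theorem for combinatorial model categories: the hypotheses are that the weak equivalences form an accessible (``perfect'') class satisfying two-out-of-three and that monomorphisms which are weak equivalences are closed under pushout and transfinite composition; the generating trivial cofibrations are then produced abstractly and are never identified with any explicit set. Marked anodyne maps enter separately and do two jobs: one proves that they are weak equivalences, and — by a genuinely non-formal argument, necessary precisely because they do not generate — that any object with the right lifting property against them is fibrant; that is how item (3) of the statement is obtained. Three smaller corrections in the same direction: your list of generators omits the maps $K^\flat \to K^\sharp$ for $K$ a Kan complex, which over the point are exactly what forces \emph{every} equivalence to be marked in a fibrant object; your claim that flat-to-fibrant marked anodyne maps have inner anodyne underlying maps is false (the outer-horn family has right anodyne underlying maps), so the derived-unit step of the Quillen equivalence must instead run through the mapping-space characterization; and the weak equivalences you define agree with ``underlying Joyal equivalences'' only between fibrant objects — for instance $\Delta[1]^\flat \to \Delta[1]^\sharp$ is the identity on underlying simplicial sets but is not a weak equivalence — so item (2) must be read as the fibrant-object comparison from \cite{lurie2009htt}, and a complete write-up has to prove that comparison rather than fold it into the definition.
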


  \begin{remone}
    This model structure is originally defined in \cite{lurie2009htt} as a special case of the Cartesian model structure 
    for marked simplicial sets (which 
    we will review in \cref{the:cart model structure marked simp set}) and hence does not have its own separate name there.  
  \end{remone}
 
 We want to use the marked Joyal model structure to adjust the unmarked CSS model structure. 
 
 \begin{lemone} \label{lemma:t shriek enriched Quillen equiv}
  The adjunction 
  \begin{center}
   \adjun{(\ss^+)^{un^+CSS}}{(\sset^+)^{Joy^+}}{(\tplus)_!}{(\tplus)^!}
  \end{center}
  is a simplicial Quillen adjunction between the unmarked CSS model structure and the marked Joyal model structure.
 \end{lemone}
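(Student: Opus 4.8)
The plan is to treat the two assertions—Quillen and simplicial—separately, with the bulk of the work going into showing that the left adjoint $(\tplus)_!$ preserves cofibrations and trivial cofibrations. I would first identify the cofibrations. Both $(\ss^+)^{un^+CSS}$ and $(\ss^+)^{un^+Ree}$ are right induced along $\Forget$ from $\ss^{CSS}$ and $\ss^{Ree}$ (\cref{prop:transferred model structure on marked simplicial spaces}, \cref{cor:marked CSS}, \cref{cor:marked Reedy}), and since $\ss^{CSS}$ is a left Bousfield localization of $\ss^{Ree}$ the two base structures have the same cofibrations and hence the same acyclic fibrations; the same is then true of the two induced structures, so they share the same cofibrations, generated by $(-)^\flat(I)$ where $I$ generates the Reedy cofibrations. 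The key computational observation is that $\Forget\,(\tplus)_!\,(-)^\flat = \Diag$, the diagonal functor from bisimplicial to simplicial sets: both sides preserve colimits and agree on the representables $F(n)\times\Delta[l]\mapsto\Delta[n]\times\Delta[l]$. As $\Diag$ preserves monomorphisms, $(\tplus)_!$ sends each generating cofibration to a map with monic underlying simplicial set, i.e. to a cofibration in $(\sset^+)^{Joy^+}$ (\cref{the:marked Joyal}), so $(\tplus)_!$ preserves cofibrations.

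For trivial cofibrations I would invoke the universal property of left Bousfield localization, which first requires recognizing $(\ss^+)^{un^+CSS}$ as the localization of $(\ss^+)^{un^+Ree}$ at $\overline{\mathcal L}=\{(-)^\flat f : f\in\Seg\cup\Comp\}$ (a set of cofibrations, \cref{the:bousfield localization}). Since both model structures already have the same cofibrations, it suffices to match fibrant objects: via the simplicial Quillen pair $((-)^\flat,\Forget)$ one has $\Map_{\ss^+}((-)^\flat C,(X,A))\cong\Map_{\ss}(C,X)$, so $(X,A)$ is $\overline{\mathcal L}$-local exactly when $X$ is a complete Segal space, which is precisely the fibrancy condition of $(\ss^+)^{un^+CSS}$; a left proper combinatorial structure is pinned down by its cofibrations together with its fibrant objects, so the two coincide.

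With this in hand I would check two things. First, that $((\tplus)_!,(\tplus)^!)\colon(\ss^+)^{un^+Ree}\rightleftarrows(\sset^+)^{Joy^+}$ is already a Quillen adjunction: cofibrations are handled above, and on the generating Reedy trivial cofibrations $\Diag$ produces pushout--products of the form $(\partial\Delta[n]\times\Delta[l])\cup(\Delta[n]\times\Lambda[l]_k)\hookrightarrow\Delta[n]\times\Delta[l]$, which are Kan---hence Joyal---trivial cofibrations, so their (monic) marked images are trivial cofibrations in $(\sset^+)^{Joy^+}$. Second, that $(\tplus)_!$ carries the localizing maps $\overline{\mathcal L}$ to weak equivalences: these maps lie between cofibrant objects, and $\Diag$ sends $G(n)^\flat\hookrightarrow F(n)^\flat$ to the spine inclusion $\Sp[n]\hookrightarrow\Delta[n]$ and $F(0)^\flat\to E(1)^\flat$ to $\Delta[0]\to J[1]$, both Joyal equivalences. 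The universal property then promotes the adjunction to one out of $(\ss^+)^{un^+CSS}$, yielding preservation of trivial cofibrations. For the simplicial claim I would use that $\tplus([n],[m])=\tau(n)\times\Delta[m]^\sharp$, whose factor $\Delta[m]^\sharp=*\otimes\Delta[m]$ is the tensor of a point by $\Delta[m]$ in the marked Joyal enrichment of \cref{the:marked Joyal}; this compatibility of $\tplus$ with tensors (the very reason $\tplus$ was chosen over the naive $t'$, see \cref{rem:different t definitions}) forces the colimit-preserving $(\tplus)_!$ to preserve tensors, making the adjunction simplicial.

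I expect the trivial--cofibration step to be the main obstacle, precisely because it cannot be reduced to the unmarked Joyal--Tierney equivalence: $(\tplus)_!$ does not commute with $t_!$ (\cref{rem:tplus is enriched})---indeed $\Forget\,(\tplus)_!\,(-)^\flat=\Diag$ rather than $t_!$, and already $\Diag(F(0)^\flat\times\Delta[1])=\Delta[1]$ differs from $t_!(F(0)\times\Delta[1])=J[1]$---so the known equivalence cannot simply be transported, and the identification of $(\ss^+)^{un^+CSS}$ as a Bousfield localization is the essential device that circumvents this.
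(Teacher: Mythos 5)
Your route is genuinely different from the paper's: you argue entirely on the left-adjoint side, via generating (trivial) cofibrations of the transferred Reedy structure and the universal property of Bousfield localization, whereas the paper proves simpliciality first, notes that $(\tplus)_!$ preserves cofibrations, and then uses \cite[Corollary A.3.7.2]{lurie2009htt} to reduce the entire Quillen condition to the statement that $(\tplus)^!$ preserves fibrant objects, which it verifies by identifying $\Forget(\tplus)^!(S,S_{hoequiv})$ with a complete Segal space via Joyal--Tierney. Several of your ingredients are correct and attractive, in particular the identity $\Forget\,(\tplus)_!\,(-)^\flat\cong\Diag$, the identification of $(\ss^+)^{un^+CSS}$ as the left Bousfield localization of $(\ss^+)^{un^+Ree}$ at $\{f^\flat : f\in\Seg\cup\Comp\}$, the treatment of the localizing maps, and the simpliciality argument (which is essentially the paper's).

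However, the step you yourself flag as the crux---preservation of trivial cofibrations at the Reedy level---is wrong as written. You claim the maps $(\partial\Delta[n]\times\Delta[l])\cup(\Delta[n]\times\Lambda[l]_k)\hookrightarrow\Delta[n]\times\Delta[l]$ are ``Kan---hence Joyal---trivial cofibrations.'' The implication goes the other way: every Joyal equivalence is a Kan equivalence, but not conversely. For $n=0$, $l=1$, $k=0$ the map is $\{0\}\hookrightarrow\Delta[1]$, a Kan trivial cofibration that is \emph{not} a Joyal equivalence. Hence the underlying simplicial set maps of $(\tplus)_!$ applied to the generating trivial cofibrations of $(\ss^+)^{un^+Ree}$ are in general not Joyal equivalences, and any argument reducing marked Joyal triviality to the underlying simplicial set (which is how you, following the characterization stated in \cref{the:marked Joyal}, conclude) collapses exactly on the outer horns. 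The claim you need is nevertheless true, but its proof must use the markings: by your own computation, the image of the generating trivial cofibration is the pushout-product of the marked cofibration $\partial\Delta[n]^\flat\to\Delta[n]^\flat$ with $\Lambda[l]_k^\sharp\to\Delta[l]^\sharp$, the second factor carrying the \emph{sharp} marking because $\tplus([n],[l])=\tau(n)\times\Delta[l]^\sharp$; one then invokes the compatibility (SM7) of the marked Joyal model structure with its simplicial enrichment $\Map_{\sset^+}(S,T)_n=\Hom_{\sset^+}(S\times\Delta[n]^\sharp,T)$ from \cref{the:marked Joyal}, i.e.\ pushout-products of marked cofibrations with Kan trivial cofibrations of simplicial sets are marked trivial cofibrations. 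Concretely, $\{0\}\to\Delta[1]^\sharp$ \emph{is} a marked Joyal equivalence---for fibrant $(Z,Z_{hoequiv})$, restriction along it is the equivalence from the space of equivalences of $Z$ to its space of objects---even though its underlying map is not a Joyal equivalence; this also shows that the ``underlying map'' description of weak equivalences cannot be applied the way you apply it. Until this step is replaced by such a marked argument (or by the paper's right-adjoint argument), your proof does not go through.
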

 
 \begin{proof}
  First, the adjunction is in fact simplicial. Indeed, by definition we have $(\tplus)_!(\Delta[n]^\flat) = \Delta[n]^\#$, which gives us following natural bijection 
  $$\Map_{\sset^+}((\tplus)_!X,S)_n = \Hom_{\sset^+}(((\tplus)_!X) \times \Delta[n]^\#,S) = \Hom_{\sset^+}((\tplus)_!(X \times \Delta[n]^\flat),S) \cong$$ $$\Hom_{\ss^+}(X \times \Delta[n]^\flat,(\tplus)^!S) = \Map_{\ss^+}(X,(\tplus)^!S)_n$$
  
  As all monomorphisms in $(\sset^+)^{Joy^+}$ are cofibrations, the left adjoint $(\tplus)_!$ indeed preserves cofibrations.
  Hence, in order to prove we have a Quillen adjunction it suffices to prove $(\tplus)^!$ preserves fibrant objects \cite[Corollary A.3.7.2]{lurie2009htt}. However, this is immediate as fibrant objects in marked Joyal model structure are of the form $(S,S_{hoequiv})$ where $S$ is a quasi-category and $\Forget(\tplus)^!(S,S_{hoequiv})=t^!(S)$, which is a complete Segal space, by \cref{the:jt t}, and so $(\tplus)^!(S,S_{hoequiv})$ is fibrant in the unmarked complete Segal space model structure.
\end{proof}  
  Using this Quillen adjunction we can finally construct a new model structure on $\ss^+$, in which the fibrant objects 
  depends on the markings!
  
 \begin{theone} \label{the:absolute Cartesian model structure on marked simplicial spaces}
  There exists a unique simplicial, cofibrantly generated, left proper model structure on 
  marked simplicial spaces, denoted $(\ss^+)^{CSS^+}$ and called the marked CSS model structure, 
  characterized by 
  \begin{enumerate}
   \item Fibrant objects are marked simplicial spaces of the form 
   $(W,W_{hoequiv})$, where $W$ is a complete Segal space and $W_{hoequiv}$ is the subspace of weak equivalences. 
   \item A map $f: (X,A) \to (Y,B)$ is a cofibration if the map of underlying simplicial spaces $X \to Y$ is a monomorphism.
   \item A map $f: (X,A) \to (Y,B)$ is a weak equivalence if for every fibrant object $(W,W_{hoequiv})$ the induced map 
   $$\Map_{\ss^+}((Y,B),(W,W_{hoequiv})) \to \Map_{\ss^+}((X,A),(W,W_{hoequiv}))$$
   is a Kan equivalence.
   \item A map between fibrant objects $(W,W_{hoequiv}) \to (V,V_{hoequiv})$ is 
   a weak equivalence (fibration) if and only if it is a weak equivalence (fibration) in the unmarked CSS model structure.
   \item The adjunction 
    \begin{equation} \label{eq:marked t equiv}
   \adjun{(\ss^+)^{CSS^+}}{(\sset^+)^{Joy^+}}{(\tplus)_!}{(\tplus)^!}
  \end{equation}
   is a simplicial Quillen equivalence of simplicial model structures. 
   \end{enumerate}
 \end{theone}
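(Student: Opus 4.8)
The whole statement is engineered to be produced by \cref{the:localizing adjunctions}, so the plan is to exhibit $(\ss^+)^{CSS^+}$ as the output of that theorem applied to a simplicial Quillen adjunction into $(\sset^+)^{Joy^+}$. Recall what \cref{the:localizing adjunctions} delivers: starting from a simplicial Quillen adjunction $(F,G)$ out of a left proper combinatorial simplicial model category whose cofibrations are monomorphisms, it builds a new structure on the source with the \emph{same} cofibrations, with weak equivalences those $f$ for which $Ff$ is a weak equivalence, and with fibrations characterised by lifting; crucially, it reduces the resulting Quillen equivalence to full faithfulness of the derived right adjoint. The natural adjunction to feed in is $((\tplus)_!,(\tplus)^!)$ landing in $(\sset^+)^{Joy^+}$, which is a simplicial Quillen adjunction by \cref{lemma:t shriek enriched Quillen equiv}.

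Granting a correct base (see below), \cref{the:localizing adjunctions} at once produces a left proper, cofibrantly generated, simplicial model structure whose weak equivalences are the maps $f$ with $(\tplus)_!f$ a marked Joyal equivalence. Since derived mapping spaces into a fibrant object factor through $(\tplus)_!$ and the marked Joyal mapping spaces, this is exactly characterisation (3), while characterisation (4) is the general fact that on fibrant objects the localised weak equivalences and fibrations agree with those of the base. For characterisation (1) I would describe the fibrant objects as the \emph{local} objects among the base-fibrant ones; combining that the marked Joyal fibrant objects are the $(S,S_{hoequiv})$ with $S$ a quasi-category, the computation $\Forget(\tplus)^!(S,S_{hoequiv})=t^!S$ from the proof of \cref{lemma:t shriek enriched Quillen equiv}, and the fact that $t^!S$ is a complete Segal space, one reads off that the fibrant objects are precisely the $(W,W_{hoequiv})$ with $W$ a complete Segal space, the marking being forced to the subspace of homotopy equivalences. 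Uniqueness then follows because a cofibrantly generated left proper structure is pinned down by its cofibrations together with its fibrant objects.

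The step where the obvious choice of base fails is characterisation (2), and resolving it is the first thing to get right. \cref{the:localizing adjunctions} preserves the cofibrations of the base, but the cofibrations of $(\ss^+)^{un^+CSS}$ are far too few: as the excerpt records, $F(1)^\sharp$ is not even cofibrant, whereas (2) demands that \emph{every} underlying monomorphism be a cofibration. Hence the base cannot be the unmarked CSS structure of \cref{cor:marked CSS}. Instead I would run \cref{the:localizing adjunctions} from a marked Reedy structure on $\ss^+\cong\Fun(\Delta,\sset^+)$ built from the marked Joyal structure on $\sset^+$ as coefficients (via \cref{the:reedy model structure}), whose cofibrations are exactly the underlying monomorphisms, and then re-establish that $((\tplus)_!,(\tplus)^!)$ remains a simplicial Quillen adjunction out of this larger-cofibration base, the new point being to check that $(\tplus)_!$ preserves monomorphisms and trivial cofibrations.

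With the base fixed, the formal outputs (1)--(4) go through as above, and the remaining genuinely homotopical content is characterisation (5), which I expect to be the main obstacle. By part (3) of \cref{the:localizing adjunctions} the adjunction is a simplicial Quillen equivalence as soon as the derived functor of $(\tplus)^!$ is fully faithful. On fibrant objects $\Forget(\tplus)^!$ computes the right adjoint $t^!$ of the Joyal--Tierney equivalence \cref{the:jt ip}, whose derived functor is fully faithful, so the task is to transport this across the marking data: since the markings on both sides are determined by the homotopy equivalences and therefore carry no extra homotopical information, one must argue that the unit (or counit) of $((\tplus)_!,(\tplus)^!)$ is an equivalence marking-and-all. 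This is exactly the point where the Joyal--Tierney comparison has to be imported \emph{compatibly with the markings}, and it is where I expect the real work to lie.
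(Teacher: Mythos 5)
Your overall strategy coincides with the paper's: feed the simplicial Quillen adjunction $((\tplus)_!,(\tplus)^!)$ of \cref{lemma:t shriek enriched Quillen equiv} into \cref{the:localizing adjunctions}, read off the structural properties formally, and reduce (5) to full faithfulness of the derived right adjoint, which ultimately rests on \cref{rem:t joyal enriched}. The one place you depart from the paper is the choice of base: the paper does take $(\ss^+)^{un^+CSS}$ as the input to \cref{the:localizing adjunctions}, and it disposes of (2) by asserting that in the localized structure a map is a cofibration if and only if its image under $(\tplus)_!$ is one. Your objection to that choice is legitimate: \cref{the:localizing adjunctions} as stated leaves the cofibrations of the base unchanged, and the paper itself observes that $F(1)^\sharp$ fails to be cofibrant in the unmarked structure, so inheriting cofibrations from that base sits in real tension with (2). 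You have put your finger on a genuine soft spot in the paper's own argument, and replacing the base by a structure whose cofibrations are the underlying monomorphisms is a reasonable repair.

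However, your repair opens a gap that you close only with the phrase ``one reads off,'' and it is not a formality. With your base --- the Reedy structure on $\Fun(\Delta^{op},\sset^+)$ with marked Joyal coefficients --- base-fibrancy is a condition in the quasi-category direction: each level in the space direction is a quasi-category marked by its equivalences, and the matching maps are marked Joyal fibrations. Being a complete Segal space is a condition in the \emph{other} direction: Reedy fibrancy of the underlying simplicial space over the Kan model structure, plus the Segal and completeness conditions. \cref{the:localizing adjunctions} only tells you that the fibrant objects of the localization are the base-fibrant objects that are local, and locality produces a zigzag of levelwise Joyal equivalences between $X$ and $(\tplus)^!(S,S_{hoequiv})$, not an isomorphism. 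A marked simplicial space that is merely levelwise Joyal equivalent to $(t^!S,(t^!S)_{hoequiv})$ need not have a Reedy (Kan) fibrant underlying simplicial space, need not satisfy the Segal and completeness conditions, and its marking need not literally be $W_{hoequiv}$; so property (1) does not follow by inspection, and it is exactly the step that your choice of base makes hard. (The paper's base makes this step trivial, since unmarked CSS fibrancy already says the underlying simplicial space is a complete Segal space, at the cost of the cofibration problem you identified; the two choices trade difficulties rather than one dominating the other.) In addition, the two remaining substantive items are only flagged, not proven: that $((\tplus)_!,(\tplus)^!)$ is a simplicial Quillen adjunction out of your new base, and the full-faithfulness computation for (5) --- in the paper this is the longest part of the proof, a chain of identifications reducing both mapping spaces to $t^!(T^S)_0 \to ((t^!T)^{t^!S})_0$ before invoking \cref{rem:t joyal enriched} --- which you defer as ``where the real work lies.'' As it stands, the proposal locates all the right difficulties but resolves neither (1) nor (5).
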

 
 \begin{proof}
  By \cref{lemma:t shriek enriched Quillen equiv}, we have a simplicial Quillen adjunction
  \begin{center}
   \adjun{(\ss^+)^{un^+CSS}}{(\sset^+)^{Joy^+}}{(\tplus)_!}{(\tplus)^!}.
  \end{center}
  Hence, by applying \cref{the:localizing adjunctions}, we get a new model structure on $\ss^+$, 
  which we call the {\it marked CSS model structure}.
  We want to prove that this model structure satisfies all the conditions stated above. We will start 
  by proving that the adjunction $((\tplus)_!,(\tplus)^!)$ is a Quillen equivalence. 
  
  By \cref{the:localizing adjunctions}, it suffices to show that the derived right Quillen functor, $(\tplus)^!$, is fully faithful. 
  Fix two fibrant objects $(S,S_{hoequiv})$, $(T,T_{hoequiv})$ in $(\sset^+)^{Joy^+}$. 
  We want to prove 
  $$\Map_{\sset^+}((S,S_{hoequiv}),(T,T_{hoequiv})) \to \Map_{\ss^+}((\tplus)^!(S,S_{hoequiv}),(\tplus)^!(T,T_{hoequiv}))$$
  is a Kan equivalence. This will require several computations. 
  \par 
  First, we observe that a $\Delta[n]^\sharp \to (S,S_{hoequiv})$ sends all edges to weak equivalences, and so we have 
  bijection 
  \begin{equation} \label{eq:second marked equation}
   \Hom_{\sset^+}((S,S_{hoequiv}), (T, T_{hoequiv})) \cong \Hom_{\sset}(S,T).
  \end{equation}
  Moreover, we observe that any map of simplicial sets $S \to T$ will automatically send $S_{hoequiv}$ to $T_{hoequiv}$. 
  Giving us a Kan equivalence
  \begin{equation} \label{eq:first marked equation}
   \Hom_{\sset^+}(\Delta[n]^\sharp, (S,S_{hoequiv})) \simeq \Hom_{\sset}(J[n],S).
  \end{equation}  
  Combining these we have
   \begin{align*}
   \Map_{\sset^+}((S,S_{hoequiv}),(T,T_{hoequiv}))_n =
    & \Hom_{\sset^+}((S,S_{hoequiv}) \times \Delta[n]^\sharp, (T, T_{hoequiv}))  & \overset{(\ref{eq:second marked equation})}{\cong} \\
    & \Hom_{\sset^+}((S,S_{hoequiv}) \times (J[n],J[n]_{hoequiv}), (T,T_{hoequiv})) &  \overset{(\ref{eq:first marked equation})}{\simeq} \\
    & \Hom_{\sset}(S \times J[n],T) & = t^!(T^S)_{0n}.
   \end{align*}
  Next, by the argument in the proof of \cref{lemma:t shriek enriched Quillen equiv} we have 
  \begin{equation} \label{eq:third marked equation}
   (\tplus)^!(S,S_hoequiv) = (t^!S,t^!S_{hoequiv}).
  \end{equation}
  Moreover, using the same argument as above, we have a bijection 
  \begin{equation} \label{eq:fourth marked equation}
   \Hom_{\ss^+}((S,S_{hoequiv}), (T, T_{hoequiv})) \cong \Hom_{\ss}(S,T).
  \end{equation}  
  Combining these we have 
  \begin{align*}
   \Map_{\ss^+}((\tplus)^!(S,S_{hoequiv}),(\tplus)^!(T,T_{hoequiv}))_n = 
   & \Hom_{\ss^+}((\tplus)^!(S,S_{hoequiv}) \times \Delta[n],(\tplus)^!(T,T_{hoequiv})) & \overset{(\ref{eq:third marked equation})}{\cong} \\
   & \Hom_{\ss^+}((t^!S,t^!S_{hoequiv}) \times \Delta[n],(t^!T,t^!T_{hoequiv}))  & \overset{(\ref{eq:fourth marked equation})}{\cong}   \\
   & \Hom_{\ss}(t^!S \times \Delta[n],t^!T) & = ((t^!T)^{t^!S})_{0n}.
  \end{align*}
  
  Thus, in order to prove that the derived functor $(\tplus)^!$ is fully faithful it suffices to prove that the map 
  $$t^!(T^S)_0 \to (t^!T^{t^!S})_0$$
  is a Kan equivalence. However, this immediately follows from that fact that the map $t^!(T^S) \to t^!(T)^{t^!(S)}$ is a CSS equivalence
  (\cref{rem:t joyal enriched}). 
  This proves that $((\tplus)_!,(\tplus)^!)$ is a Quillen equivalence. 
  
  We move on to confirm all the properties of the model structure given in the statement of \cref{the:absolute Cartesian model structure on marked simplicial spaces}.
  \begin{enumerate}
  	\item We start by characterizing the fibrant objects. By definition of the localization model structure,
  	an object $(W,A)$ is fibrant if it is fibrant in the unmarked CSS model structure, meaning $W$ is a complete Segal space, 
  	and it lies in the essential image of the right adjoint $(\tplus)^!$, hence $A = W_{hoequiv}$.
  	\item We now move on to prove that cofibrations are monomorphisms. 
  	By \cref{the:localizing adjunctions}, a map is a cofibration if and only its image under $(\tplus)_!$ is a cofibration.
  	However, the cofibrations in the marked Joyal model structure are just the monomorphisms and so the result follows.
  	\item Next we characterize the weak equivalences in the model structure. Notice the marked CSS model structure is simplicial and fibrant objects are of the form $(W,W_{hoequiv})$ and so a map $(Y,A) \to (Z,B)$ is an equivalence if and only if 
  	$$ \Map_{\ss^+}((Z,B),(W,W_{hoequiv})) \to \Map_{\ss^+}((Y,A),(W,W_{hoequiv}))$$
  	is a Kan equivalence for every complete Segal space $W$.
  	\item  Finally, the model structure is given as a localization model structure of the unmarked CSS model structure.
  	Hence, fibrations (weak equivalences) between fibrant objects are given by fibrations (weak equivalences) 
  	in the unmarked CSS model structure. 
  \end{enumerate}
\end{proof}
  We can now use our results about the marked CSS model structure to observe various interesting Quillen equivalences. 

 \begin{theone} \label{the:marked CSS equiv to Cartesian}
  The following diagram 
 \begin{center}
   \begin{tikzcd}[row sep=0.7in, column sep=0.7in]
    \sset^{Joy} \arrow[r, shift left=1.8, "p_1^*", "\bot"'] \arrow[d, shift left=1.8, "(-)^\flat", "\rotatebot"'] & 
    \ss^{CSS} \arrow[d, shift left=1.8, "(-)^\flat", "\rotatebot"'] \arrow[l, shift left=1.8, "i_1^*"]
    \\
    (\sset^+)^{Joy^+} \arrow[r, shift left=1.8, "(\pplus)^*", "\bot"'] \arrow[u, shift left=1.8, "\Forget"] & 
    (\ss^+)^{CSS^+} \arrow[u, shift left=1.8, "\Forget"] \arrow[l, shift left=1.8, "(\iplus)^*"]
   \end{tikzcd}
 \end{center}
 is a commutative diagram of Quillen equivalences. 
 Here the top row has the Joyal and CSS model structure. The bottom row has the 
 marked Joyal and marked CSS model structure. 
 \end{theone}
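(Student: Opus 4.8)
The plan is to verify commutativity by a direct computation, to take the top and left edges as already established, to prove the right edge is a Quillen equivalence by hand, and then to deduce the bottom edge by a two-out-of-three argument.

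First I would check that the square commutes at the level of left adjoints; by uniqueness of adjoints this also gives commutativity of the right adjoints. This is a direct computation: both $p_1^*$ and $(\pplus)^*$ are restriction along the projections forgetting the space direction, and they are compatible via $\inc\colon \Delta \to \Delta^+$, since $\pplus \circ (\inc \times \id) = \inc \circ p_1$. Hence on underlying (bi)simplicial objects the two composites agree, and it only remains to compare the assigned markings: starting from a simplicial set $S$, both $(-)^\flat \circ p_1^*$ and $(\pplus)^* \circ (-)^\flat$ produce the bisimplicial object constant in the space direction with value $S$, equipped with the discrete marking generated by the degenerate edges $s_0S_0$. Thus $(\pplus)^* \circ (-)^\flat \cong (-)^\flat \circ p_1^*$. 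The top edge is then a Quillen equivalence by \cref{the:jt t} and the left edge by \cref{the:marked Joyal}.

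For the right edge I would first note that $(-)^\flat\colon \ss^{CSS} \to (\ss^+)^{CSS^+}$ is left Quillen: it preserves monomorphisms, and since $\Map_{\ss^+}(X^\flat,(W,W_{hoequiv})) \cong \Map_\ss(X,\Forget(W,W_{hoequiv})) = \Map_\ss(X,W)$, it carries $CSS$-equivalences to $CSS^+$-equivalences. To upgrade this to a Quillen equivalence I would use the unit-plus-reflection criterion. The functor $\Forget$ reflects weak equivalences between fibrant objects, which is immediate from condition (4) of \cref{the:absolute Cartesian model structure on marked simplicial spaces}: a map $(W,W_{hoequiv}) \to (V,V_{hoequiv})$ is a $CSS^+$-equivalence if and only if its underlying map is a $CSS$-equivalence. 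For the derived unit, let $X^\flat \to (\hat X,\hat X_{hoequiv})$ be a fibrant replacement in $(\ss^+)^{CSS^+}$, so the derived unit is the underlying map $X \to \hat X$. Testing the $CSS^+$-equivalence $X^\flat \to (\hat X,\hat X_{hoequiv})$ against an arbitrary fibrant object $(W,W_{hoequiv})$, and using that any map of complete Segal spaces preserves equivalences (so that $\Map_{\ss^+}((\hat X,\hat X_{hoequiv}),(W,W_{hoequiv})) \simeq \Map_\ss(\hat X,W)$ and $\Map_{\ss^+}(X^\flat,(W,W_{hoequiv})) \simeq \Map_\ss(X,W)$), one obtains that $\Map_\ss(\hat X,W) \to \Map_\ss(X,W)$ is an equivalence for every complete Segal space $W$; that is, $X \to \hat X$ is a $CSS$-equivalence.

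Finally, for the bottom edge I would first check that $(\pplus)^* \dashv (\iplus)^*$ is a simplicial Quillen adjunction by showing the right adjoint preserves fibrant objects and invoking \cite[Corollary A.3.7.2]{lurie2009htt}, exactly as in \cref{lemma:t shriek enriched Quillen equiv}: for a fibrant $(W,W_{hoequiv})$ one computes $(\iplus)^*(W,W_{hoequiv}) = (i_1^*W,(W_{hoequiv})_0)$, whose underlying simplicial set $i_1^*W$ is a quasi-category by \cref{the:jt t} and whose marking is precisely the set of equivalences of $i_1^*W$, hence fibrant in the marked Joyal model structure. Since every object is cofibrant, the commuting square yields the identity $\mathbb{L}(-)^\flat \circ \mathbb{L}p_1^* = \mathbb{L}(\pplus)^* \circ \mathbb{L}(-)^\flat$ of total left derived functors; as the top, left, and right edges are Quillen equivalences, the left-hand composite is an equivalence of homotopy categories and $\mathbb{L}(-)^\flat$ on the left is invertible, so two-out-of-three forces $\mathbb{L}(\pplus)^*$ to be an equivalence, and the bottom edge is a Quillen equivalence. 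I expect the main obstacle to be the right edge, specifically the marking bookkeeping behind $\Map_{\ss^+}((\hat X,\hat X_{hoequiv}),(W,W_{hoequiv})) \simeq \Map_\ss(\hat X,W)$ and the identification $(W_{hoequiv})_0 = \{\text{equivalences of } i_1^*W\}$; this is precisely the place where the failure of strict commutativity noted in \cref{rem:tplus is enriched} blocks a purely formal argument through the diagonal $(\tplus)_!$ and necessitates the hands-on verification.
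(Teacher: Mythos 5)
Your proposal is correct, but it distributes the work differently than the paper does. Both proofs handle commutativity, the top edge, the left edge, and the Quillen-adjunction check for $((\pplus)^*,(\iplus)^*)$ in essentially the same way; the difference is which of the two remaining edges gets the direct argument. The paper proves the \emph{bottom} edge directly: it extends the square by the adjunction $((\tplus)_!,(\tplus)^!)$, observes that the composite $(\tplus)_!\circ(\pplus)^*$ is the identity, and invokes the Quillen equivalence $((\tplus)_!,(\tplus)^!)$ from \cref{the:absolute Cartesian model structure on marked simplicial spaces} plus $2$-out-of-$3$; the right edge $((-)^\flat,\Forget)$ is then deduced by $2$-out-of-$3$ in the square. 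You do the reverse: you prove the \emph{right} edge directly, via the derived-unit-plus-reflection criterion, and deduce the bottom edge by $2$-out-of-$3$. Your route buys self-containedness -- it only uses the descriptive properties (1)--(4) of the marked CSS model structure, never the fact that $((\tplus)_!,(\tplus)^!)$ is a Quillen equivalence -- at the cost of redoing mapping-space computations that closely parallel the full-faithfulness computation already carried out in the proof of \cref{the:absolute Cartesian model structure on marked simplicial spaces}; the paper's route reuses that heavy lifting and stays formal. Two small points on your write-up: the identification $\Map_{\ss^+}((\hat X,\hat X_{hoequiv}),(W,W_{hoequiv})) \cong \Map_\ss(\hat X,W)$ must hold at the level of all simplices of the mapping space, i.e.\ every map $\hat X \times \Delta[n] \to W$ must preserve markings; this does work, but only because $W_{hoequiv}$ is a union of path components of $W_1$, so the check reduces to vertices, where it follows from functors preserving equivalences -- worth stating explicitly. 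Second, your closing claim that \cref{rem:tplus is enriched} ``blocks a purely formal argument through the diagonal $(\tplus)_!$'' is not quite right: that remark concerns non-commutativity with the \emph{unmarked} $(t_!,t^!)$, and the paper's proof of the bottom edge is precisely such a formal argument through $(\tplus)_!$, using $(\tplus)_!\circ(\pplus)^*=\id$; this misreading does not affect the validity of your proof.
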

 
 \begin{proof}
   First we show the diagram commutes.
   It suffices to observe that the left adjoints commute. Moreover, left adjoints commute 
  with colimits and so it suffices to check on generators.  
  However, both sides of the square map $\Delta[n]$ to $F(n)^\flat$.
  
  We now move on to prove all four are Quillen equivalences. 
  By \cref{the:jt ip}, we already know that $(p_1^*,i_1^*)$ is a Quillen equivalence. 
  Similarly, by \cref{the:marked Joyal}, we know that the left hand $((-)^\flat, \Forget)$ is a Quillen equivalence. 
  Hence we have to focus on the other two. 
  
  First we show that $((\pplus)^*, (\iplus)^*)$ is a Quillen adjunction. Evidently $(\pplus)^*$ preserves cofibrations, which are just the monomorphisms. 
  Hence, it suffices to prove that $(\iplus)^*$ preserves fibrant objects and fibrations between fibrant objects. However, fibrant objects are of the form $(W,W_{hoequiv})$, where $W$ is a complete Segal space. By direct computation, $(\iplus)^*(W,W_{hoequiv}) = (i_1^*(W), i_1^*(W)_{hoequiv})$, where $i_1^*(W)$ is a quasi-category (\cref{the:jt ip}). Moreover, fibrations between fibrant objects are just unmarked CSS fibrations (\cref{the:absolute Cartesian model structure on marked simplicial spaces}), which are, again by \cref{the:jt ip}, taken to Joyal fibrations. 
  
  Next, we prove that $((\pplus)^*, (\iplus)^*)$ is a Quillen equivalence. 
  We can extend our adjunction as follows:
  \begin{center}
   \begin{tikzcd}[row sep=0.5in, column sep=0.5in]
    (\sset^+)^{Joy^+} \arrow[r, shift left=1.8, "(\pplus)^*", "\bot"'] & 
    (\ss^+)^{CSS^+} \arrow[r, shift left=1.8, "(\tplus)_!", "\bot"'] \arrow[l, shift left=1.8, "(\iplus)^*"] & 
    (\sset^+)^{Joy^+} \arrow[l, shift left=1.8, "(\tplus)^!"]
   \end{tikzcd}
   .
  \end{center}
  By \cref{the:absolute Cartesian model structure on marked simplicial spaces}, $((\tplus)_!,(\tplus)^!)$ is a Quillen 
  equivalence. Moreover, the composition $(\tplus)_!(\pplus)^*$ is just the identity and so is by default a Quillen equivalence.
  Hence, by $2$-out-of-$3$, $((\pplus)^*,(\iplus)^*)$ is a Quillen equivalence.
  
  Next, we show $((-)^\flat, \Forget)$ is a Quillen adjunction. $(-)^\flat$ preserves cofibrations as they are just the monomorphisms.
  So,we only have to prove that $\Forget$ preserves fibrant objects and fibrations between fibrant objects. 
  Similar to above, fibrant objects are just $(W,W_{hoequiv})$, where $W$ is a complete Segal space and fibrations 
  are just Reedy fibrations, both of which are preserves by $\Forget$. 
  
  Finally, we prove that $((-)^{\flat},\Forget)$ is a Quillen equivalence. 
  However, this follows immediately from the fact that the other three adjunctions are Quillen equivalences and $2$-out-of-$3$.
 \end{proof} 
 
 Combining \cref{the:absolute Cartesian model structure on marked simplicial spaces} and \cref{the:marked CSS equiv to Cartesian} 
 with \cref{prop:quillen equiv over cat} gives us following useful corollary.
 
 \begin{corone} \label{cor:marked quillen equiv over}
  Let $(X,A)$ be a marked simplicial space and $(S,B)$ be a marked simplicial set. We have following Quillen equivalences
  \begin{center}
   \adjun{(\ss^+)^{CSS^+}_{/X}}{(\sset^+)^{Joy^+}_{/(\tplus)_!(X)}}{(\tplus)_!}{u^*(\tplus)^!}
   \adjun{(\sset^+)^{Joy^+}_{/S}}{(\ss^+)^{CSS^+}_{/ (\pplus)^*S}}{(\pplus)^*}{u^*(\iplus)^*}.
  \end{center}
  Moreover, the adjunction $((\tplus)_!,u^*(\tplus)^!)$ is a simplicial Quillen equivalence. 
 \end{corone}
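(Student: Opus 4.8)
The plan is to invoke \cref{prop:quillen equiv over cat} twice, once for each of the two Quillen equivalences recorded in \cref{the:marked CSS equiv to Cartesian} and \cref{the:absolute Cartesian model structure on marked simplicial spaces}, since the present statement is precisely the over-categorical shadow of those equivalences. The only genuine content is to check that the hypotheses of \cref{prop:quillen equiv over cat} are satisfied and then to match up the functors correctly.

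First I would verify the single nontrivial hypothesis of \cref{prop:quillen equiv over cat}, namely that all objects are cofibrant in both $(\ss^+)^{CSS^+}$ and $(\sset^+)^{Joy^+}$. In each case the cofibrations are exactly the maps whose underlying map of marked (bi)simplicial sets is a monomorphism (by \cref{the:absolute Cartesian model structure on marked simplicial spaces} and \cref{the:marked Joyal}), and the initial object is the empty marked object. Since the unique map out of the empty object is always a monomorphism, every object is cofibrant, so both model structures meet the standing assumption of \cref{prop:quillen equiv over cat}.

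With this in hand I would apply \cref{prop:quillen equiv over cat} to the simplicial Quillen equivalence $((\tplus)_!,(\tplus)^!)\colon (\ss^+)^{CSS^+} \to (\sset^+)^{Joy^+}$ of \cref{the:absolute Cartesian model structure on marked simplicial spaces}, taking the fixed object to be the marked simplicial space $(X,A)$. Here $\C^\M = (\ss^+)^{CSS^+}$ and $\D^\N = (\sset^+)^{Joy^+}$, so the proposition produces the Quillen equivalence $((\tplus)_!, u^*(\tplus)^!)$ between $(\ss^+)^{CSS^+}_{/X}$ and $(\sset^+)^{Joy^+}_{/(\tplus)_!(X)}$; and because the original adjunction is \emph{simplicial}, \cref{prop:quillen equiv over cat} guarantees the induced over-category adjunction is simplicial as well, which is exactly the ``moreover'' assertion. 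Applying \cref{prop:quillen equiv over cat} a second time, now to the Quillen equivalence $((\pplus)^*,(\iplus)^*)\colon (\sset^+)^{Joy^+} \to (\ss^+)^{CSS^+}$ of \cref{the:marked CSS equiv to Cartesian} with $\C^\M = (\sset^+)^{Joy^+}$, $\D^\N = (\ss^+)^{CSS^+}$ and fixed object the marked simplicial set $(S,B)$, yields the second Quillen equivalence $((\pplus)^*, u^*(\iplus)^*)$ over $(\pplus)^*S$.

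The argument is essentially formal, so I do not expect a substantive obstacle: the cofibrancy check above is the only thing requiring verification, and everything else is a direct citation of \cref{prop:quillen equiv over cat}. The sole point demanding care is bookkeeping---keeping straight which model category plays the role of $\C$ and which of $\D$ in each of the two applications, and hence which functor lands in which over-category---but this is a matter of matching notation rather than of proof.
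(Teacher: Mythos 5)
Your proposal is correct and matches the paper's own argument exactly: the paper proves this corollary by combining \cref{the:absolute Cartesian model structure on marked simplicial spaces} and \cref{the:marked CSS equiv to Cartesian} with \cref{prop:quillen equiv over cat}, which is precisely your two applications of that proposition, with the simplicial part of the ``moreover'' clause following from the simplicial clause of \cref{prop:quillen equiv over cat} just as you say. Your explicit check that all objects are cofibrant (since cofibrations are detected by monomorphisms on underlying objects) is the one hypothesis the paper leaves implicit, and it is verified correctly.
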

 Taking $X$ to be the final object, the result above implies that we have constructed another model structure for $(\infty,1)$-categories on the category of marked simplicial spaces.
 
 \begin{remone} \label{rem:two model structures}
  We can summarize the results of this subsection with the following diagram of Quillen equivalences 
  \begin{center}
   \begin{tikzcd}[row sep=0.5in, column sep=0.5in]
     & (\ss^+)^{un^+CSS} \arrow[dl, shift left=1.8, "\Forget"] \arrow[dr, shift left=1.8, "(\tplus)_!", "\rotateothertiltbot"' xshift=0.11in] 
     \arrow[dd, shift left=1.8, "\mathrm{id}", "\rotatebot"'] & \\
     \ss^{CSS} \arrow[ur, shift left=1.8, "(-)^\flat", "\rotatetiltbot"' xshift=-0.11in] 
     \arrow[dr, shift left=1.8, "(-)^\flat", "\rotateothertiltbot"' xshift=0.11in] & & 
     (\sset^+)^{Joy^+} \arrow[ul, shift left=1.8, "(\tplus)^!"] \arrow[dl, shift left=1.8, "(\tplus)^!"] \\
     & (\ss^+)^{CSS^+} \arrow[ul, shift left=1.8, "\Forget"] 
     \arrow[ur, shift left=1.8, "(\tplus)_!", "\rotatetiltbot"' xshift=-0.11in] \arrow[uu, shift left=1.8, "\mathrm{id}"] & 
   \end{tikzcd}
  \end{center}
  The CSS model structure on the left hand side, $\ss^{CSS}$, and the marked Joyal model structure on the right hand side,  $(\sset^+)^{Joy^+}$, 
  already existed and we build a bridge from one to the other by defining the marked CSS model structure, $(\ss^+)^{CSS^+}$.
  
  So, what is the role of the unmarked CSS model structure,$(\ss^+)^{un^+CSS}$? 
  If we wanted to directly construct a model structure on $\ss^+$ using the adjunction $((\tplus)_!, (\tplus)^!)$, we would 
  have to {\it left induce} our model structure from the marked Joyal model structure. 
  However, left-induced model structures are quite difficult to construct \cite{hkrs2017leftinduced}. 
  
  Hence, we used the unmarked CSS model structure, $(\ss^+)^{un^+CSS}$, as an intermediate step 
  to avoid such difficulties. We can easily right induce this
  model structure from the CSS model structure, $\ss^{CSS}$ (as we did in \cref{prop:transferred model structure on marked simplicial spaces}). 
  Now, as soon as we have {\it any} appropriate model structure on $\ss^+$, we can then 
  use the theory of Bousfield localizations to get the desired marked CSS model structure, $(\ss^+)^{CSS^+}$.
  Thus, we can think of the unmarked CSS model structure as an intermediate step that allows us to easily define 
  the marked CSS model structure. 
 \end{remone}
 
 We will use these adjunctions in the next subsection to define and study the Cartesian model structure of 
 marked simplicial spaces.

 \subsection{Marked Simplicial Spaces vs. Marked Simplicial Sets}
  In this section we define Cartesian fibrations of simplicial spaces and prove that we can define a 
  model structure on marked simplicial spaces over a simplicial space such that the fibrant objects are precisely the Cartesian fibrations. 
  We also show that this model structure is Quillen equivalent to the Cartesian model structure on marked simplicial sets via both adjunctions 
  $((\tplus)_!, (\tplus)^!)$, $((\pplus)^*, (\iplus)^*)$.
  
 First, we review the definition of Cartesian fibrations and Cartesian model structure of simplicial sets, as discussed in \cite{lurie2009htt}.
 
 \begin{defone} \label{def:cartesian mor qcat}
 \cite[Definition 2.4.1.1.]{lurie2009htt}
  Let $p: T \to S$ be a map of simplicial sets. An edge $f:x \to y$ in $T$ is $p$-Cartesian if 
  $$ T_{/f} \twoheadrightarrow S_{/p(f)} \times_{S_{/p(y)}} T_{/y}$$
  is a trivial Kan fibration. Here, $T_{/y}$ is defined in \cite[Proposition 1.2.9.2]{lurie2009htt}.
 \end{defone}

 \begin{defone} \label{def:cart fib sset}
  An inner fibration $p: T \to S$ is a {\it Cartesian fibration}, if for every edge $f: x \to y$ in $S$ and 
  lift $\tilde{y}$, there exist a $p$-Cartesian lift $\tilde{f}: \tilde{x} \to \tilde{y}$.
 \end{defone}

 Cartesian fibrations are fibrant objects in a model structure on marked simplicial sets. 
 
 \begin{theone} \label{the:cart model structure marked simp set}
  \cite[Proposition 3.1.3.7, Proposition 3.1.4.1, Corollary 3.1.4.4, Proposition 3.1.5.3]{lurie2009htt}
  Let $S$ be a simplicial set. There exists a unique left proper, combinatorial, simplicial model structure on 
  $\sset^+_{/S^\#}$ which may be described as follows:
  \begin{itemize}
   \item[C] A map is a cofibration if the map of underlying simplicial sets is a monomorphism.
   \item[F] Fibrant objects are of the form  $(T,T^{Cart}) \to S^\#$, where $T \to S$ is a Cartesian fibration and $T^{Cart}$ the 
   set of Cartesian edges.
   \item[W] A morphism $g$ is an equivalence if for every Cartesian fibration $T \to S$ the induced map 
   $\Map_{/S}(g,T)$ is a Kan equivalence.
  \end{itemize}
  The simplicial enrichment is given by 
  $$\Map_{/S^\#}(T,U)_n = \Hom_{/S^\#}(T \times \Delta[n]^\#,U).$$
  Finally, for every simplicial set $S$ the adjunction 
  \begin{center}
   \adjun{(\sset_{/S^\#})^{Joy^+}}{(\sset^+_{/S^{\#}})^{Cart}}{\id}{\id}
  \end{center}
  is a Quillen adjunction between the Joyal model structure and the Cartesian model structure. 
 \end{theone}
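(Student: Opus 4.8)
This statement records Lurie's Cartesian model structure, so the plan is to reconstruct its three defining features — existence, the characterization of fibrant objects, and the comparison with the Joyal structure — using the localization machinery already available to us. The natural starting point is the marked Joyal model structure $(\sset^+)^{Joy^+}$ of \cref{the:marked Joyal}, in which every object is cofibrant (cofibrations are underlying monomorphisms, so the map out of the initial object is always one). Passing to the over-category $\sset^+_{/S^\#}$ endows it with the slice model structure, whose cofibrations are exactly the maps that are monomorphisms on underlying simplicial sets and whose simplicial tensor is given by $\Delta[n]^\#$; this already matches the stated enrichment formula $\Map_{/S^\#}(T,U)_n = \Hom_{/S^\#}(T \times \Delta[n]^\#, U)$, since the marked Joyal enrichment tensors by $\Delta[n]^\sharp$. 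I would then obtain the Cartesian structure as a left Bousfield localization of this sliced structure via \cref{the:bousfield localization}, localizing at a set $\L$ of cofibrations chosen to force $p$-Cartesian lifts over each edge of $S$ together with the correct marking.

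The technical heart is the fibrancy characterization. After localizing, a fibrant object is a slice object that is in addition $\L$-local, and the work is to identify $\L$-locality with the conjunction of two conditions: that the underlying $T \to S$ is a Cartesian fibration in the sense of \cref{def:cart fib sset}, and that the marked edges are \emph{precisely} the $p$-Cartesian ones of \cref{def:cartesian mor qcat}. Concretely this amounts to choosing $\L$ to be (generated by) Lurie's class of \emph{marked anodyne} morphisms and checking, by a horn-filling and lifting analysis, that right lifting against this class encodes exactly ``every edge of $S$ with a specified lift of its target admits a $p$-Cartesian lift, and the markings are the Cartesian edges and nothing more.'' Once the fibrant objects are pinned down, the weak equivalences are automatically the maps inducing Kan equivalences on $\Map_{/S^\#}(-,(T,T^{Cart}))$ for all such fibrant targets, which is the stated description \textbf{W}, while left properness, combinatoriality, and the simplicial structure are inherited from the localized sliced structure.

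Finally, for the identity adjunction $\adjun{(\sset^+_{/S^\#})^{Joy^+}}{(\sset^+_{/S^\#})^{Cart}}{\id}{\id}$: the two structures share the same cofibrations, and since the Cartesian structure arises as a localization it merely adds weak equivalences, so the identity out of the sliced marked Joyal structure preserves cofibrations and trivial cofibrations, hence is left Quillen. I expect the main obstacle to be the fibrancy characterization of the middle paragraph — arranging $\L$ so that the $\L$-local objects are \emph{exactly} Cartesian fibrations carrying the full set of $p$-Cartesian edges, with the verification that the marking cannot be strictly larger being the delicate point. This is precisely where Lurie's combinatorial analysis of marked anodyne maps does the essential work; everything else (the enrichment formula, left properness, and the identity comparison with the Joyal structure) is then formal within the localization framework.
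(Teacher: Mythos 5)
You are reconstructing a theorem for which the paper itself gives no argument: it is quoted verbatim from \cite[Propositions 3.1.3.7, 3.1.4.1, Corollary 3.1.4.4, Proposition 3.1.5.3]{lurie2009htt}, where the Cartesian model structure is built \emph{directly} — Cartesian equivalences are defined by mapping-space criteria into Cartesian fibrations, and the model axioms are verified through the combinatorics of marked anodyne maps and the pushout-product analysis. In Lurie's development the marked Joyal structure of \cref{the:marked Joyal} is the \emph{special case} $S=\Delta[0]$ of this theorem (as the remark following \cref{the:marked Joyal} points out), not an input to it. Your proposal inverts that order: slice \cref{the:marked Joyal} over $S^\#$ and then apply \cref{the:bousfield localization}. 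This inversion is exactly where the argument breaks down.

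The gap is that the Cartesian model structure is \emph{not} a left Bousfield localization of the sliced marked Joyal structure. A localization keeps the cofibrations and can only enlarge the class of weak equivalences, so your plan requires every weak equivalence of the sliced structure to be a Cartesian equivalence; this fails. Take $S=\Delta[1]$ and the map $\Delta[1]^\flat \to \Delta[1]^\sharp$ over $\Delta[1]^\sharp$. Its underlying map is the identity, so by the description of weak equivalences in \cref{the:marked Joyal} it is a trivial cofibration in the sliced marked Joyal structure. But it is not a Cartesian equivalence: test against the Cartesian fibration $\mathrm{pr}_1\colon T=\Delta[1]\times\Delta[1] \to \Delta[1]$, whose Cartesian edges are exactly those with degenerate second component. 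Then $\Map_{/\Delta[1]^\sharp}(\Delta[1]^\sharp,(T,T^{Cart}))$ is the discrete space on the two constant sections, whereas $\Map_{/\Delta[1]^\sharp}(\Delta[1]^\flat,(T,T^{Cart}))$ contains the diagonal section as an isolated vertex (no marked homotopy can connect it to a constant section, since that would force $g(1,0)=g(1,1)$ with $g(1,0)=1$, $g(1,1)=0$); the restriction map is not surjective on $\pi_0$, hence not a Kan equivalence. So no choice of $\L$ in \cref{the:bousfield localization} can produce the Cartesian structure from the sliced structure, and your final paragraph — ``the Cartesian structure merely adds weak equivalences,'' on which the Quillen-adjunction claim rests — is false as stated. (What the example really reveals is that the weak-equivalence description in \cref{the:marked Joyal} is only valid between fibrant objects; Lurie's actual marked equivalences are defined by mapping-space criteria. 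Even if you replace the slice of \cref{the:marked Joyal} by the slice of Lurie's genuine marked structure, running the localization argument needs the facts that every Cartesian fibration $(T,T^{Cart}) \to S^\#$ is a fibration in that sliced structure and that marked equivalences over $S^\#$ are Cartesian equivalences — which are themselves part of the content of the propositions being cited. So the proposal does not bypass Lurie's combinatorial work; it presupposes it.)
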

 
  \begin{notone}
  In \cite{lurie2009htt} a map of the form  $(T,T^{Cart}) \to S^\#$ where $T \to S$ is a Cartesian fibration 
  is denoted by $T^\natural$ over $S$, however, we will not use this notation.
 \end{notone}
 
 \begin{exone}
  If $S=\Delta[0]$ then the Cartesian model structure simply recovers the marked Joyal model structure introduced in 
  \cref{the:marked Joyal}.
 \end{exone}

 Our goal is to define Cartesian fibrations of marked simplicial spaces and prove it comes with a model structure
 that is equivalent to the Cartesian model structure on marked simplicial sets. 
  
 \begin{defone} \label{def:cart fib simplicial spaces}
  A CSS fibration of simplicial spaces $Y \to X$ is a Cartesian fibration if
  $i_1^*Y \to i_1^*X$ is a Cartesian fibration of simplicial sets.
 \end{defone}
 
 We now want to give an internal description of Cartesian fibrations of simplicial spaces, that does not rely on simplicial sets. 
 This requires some category theory of simplicial spaces, as studied in \cite{rasekh2017left}.
 
 Let $X$ be a complete Segal space and $f$ a morphism. Then the the {\it complete Segal space of cones} $X_{/f}$ \cite[Definition 5.21]{rasekh2017left} is defined as
 $$X_{/f} = (X^{F(1)})^{F(1)} \underset{X^{F(1)} \times X^{F(1)}}{\times} X \times F(0).$$ 
 
 Generalizing from there, if $X$ is an arbitrary simplicial space, then we can choose a complete Segal space fibrant replacement $X \to \hat{X}$ 
 and define $X_{/f} = X \times_{\hat{X}} \hat{X}_{/f}$, using the fact that cocones are invariant under equivalences of complete Segal spaces \cite[Theorem 5.1]{rasekh2017left}. Note that, up to Reedy equivalence of simplicial spaces, this definition is independent 
 of the choice of $\hat{X}$.
 
 \begin{defone} \label{def:pcart morphism}
  Let $p:V \to W$ be a CSS fibration of simplicial spaces. A morphism $f: x \to y$ in $V$ is called $p$-Cartesian if the
  commutative square 
  \begin{center}
   \begin{tikzcd}
    V_{/f} \arrow[r] \arrow[d] & V_{/y} \arrow[d] \\
    W_{/p(f)} \arrow[r] & W_{/p(y)}
   \end{tikzcd}
  \end{center}
  is a homotopy pullback square of simplicial spaces. 
 \end{defone}
  
  We can now use this to give an internal characterization of Cartesian fibrations.
  
 \begin{lemone} \label{lemma:cart fib def agree}
  Let $p: V \to W$ be CSS fibration. Then $p$ is a Cartesian fibration if and only if 
  for every morphism $f: x \to y$ in $W$ and lift $\hat{y}$ in $V$ of $y$, there exists a $p$-Cartesian morphism lift 
  $\hat{f}$ of $f$ (meaning $p(\hat{f}) = f$).
 \end{lemone}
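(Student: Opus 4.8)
The plan is to deduce the lemma from the Joyal--Tierney equivalence $(p_1^*,i_1^*)$ of \cref{the:jt t} by showing that $i_1^*$ matches the intrinsic $p$-Cartesian condition of \cref{def:pcart morphism} with Lurie's $p$-Cartesian condition of \cref{def:cartesian mor qcat}. Since a Cartesian fibration of simplicial spaces is defined (\cref{def:cart fib simplicial spaces}) precisely so that $i_1^*p$ is a Cartesian fibration of simplicial sets, and the latter means (\cref{def:cart fib sset}) that $i_1^*p$ is an inner fibration admitting enough $i_1^*p$-Cartesian lifts, I would first observe that the inner fibration requirement is automatic: as $p$ is a CSS fibration and $i_1^*$ is right Quillen from $\ss^{CSS}$ to $\sset^{Joy}$, the map $i_1^*p$ is a Joyal fibration, hence an inner fibration. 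Thus the entire content reduces to comparing lifting data, for which I isolate the key claim: a morphism $f\colon x\to y$ in $V$ is $p$-Cartesian in the sense of \cref{def:pcart morphism} if and only if the edge $i_1^*f$ is $i_1^*p$-Cartesian in the sense of \cref{def:cartesian mor qcat}.

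Before proving the claim I would record that the relevant lifting data is carried across bijectively. The functor $i_1^*$ sends a simplicial space $X$ to the simplicial set $n \mapsto X_{n0}$, so it identifies the objects of $V$ (the points of $V_0$) with the vertices of $i_1^*V$ and the morphisms of $V$ (the points of $V_1$) with the edges of $i_1^*V$, compatibly with $p$ and with the analogous identifications for $W$. Consequently a morphism $f\colon x\to y$ in $W$ together with a lift $\hat y$ of its target corresponds to exactly one edge of $i_1^*W$ together with one lift of its target, and the existence of a $p$-Cartesian lift $\hat f$ corresponds, via the key claim, to the existence of an $i_1^*p$-Cartesian lift of $i_1^*f$.

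For the key claim I would use that both Cartesian conditions assert that a square of slices is a homotopy pullback. Indeed \cref{def:cartesian mor qcat} requires $i_1^*V_{/i_1^*f} \to i_1^*W_{/i_1^*p(f)}\times_{i_1^*W_{/i_1^*p(y)}} i_1^*V_{/i_1^*y}$ to be a trivial fibration, and since the slice projections are fibrations this strict pullback computes the homotopy pullback, so the condition says exactly that the corresponding square is a homotopy pullback of simplicial sets; \cref{def:pcart morphism} asks the analogous square of simplicial-space slices to be a homotopy pullback (using the homotopy-invariance noted after that definition, I may replace all four corners by their complete Segal space models). The two squares are matched by $i_1^*$: the slices $V_{/f},V_{/y},W_{/p(f)},W_{/p(y)}$ are right fibrations over $V$ and $W$ classifying mapping spaces (\cite{rasekh2017left}), so under the contravariant comparison of \cref{the:equivalence of contra} they are carried to the right fibrations $(i_1^*V)_{/i_1^*f}$, etc.; in particular $i_1^*(V_{/f})$ is equivalent to $(i_1^*V)_{/i_1^*f}$ compatibly with the structure maps of the square. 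Because $i_1^*$ is a right Quillen equivalence it preserves and, between fibrant objects, reflects homotopy pullbacks, and applying this turns the simplicial-space homotopy-pullback condition into the simplicial-set one and back. I expect this compatibility of the slice construction with $i_1^*$---the identification $i_1^*(V_{/f}) \simeq (i_1^*V)_{/i_1^*f}$ respecting the comparison maps---to be the main obstacle, since it requires matching the cone construction of \cite{rasekh2017left} with Lurie's quasi-categorical slice through the contravariant equivalence.

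Finally, I would assemble the pieces. Combining the bijective correspondence of lifting data with the key claim shows that ``$i_1^*p$ admits enough $i_1^*p$-Cartesian lifts'' holds if and only if ``every morphism $f\colon x\to y$ in $W$ with a chosen lift $\hat y$ of $y$ has a $p$-Cartesian lift $\hat f$.'' Together with the automatic inner fibration observation, this says exactly that $i_1^*p$ is a Cartesian fibration of simplicial sets, i.e. that $p$ is a Cartesian fibration in the sense of \cref{def:cart fib simplicial spaces}, which is the assertion of the lemma.
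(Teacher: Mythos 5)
Your proposal follows essentially the same route as the paper's own proof: reduce to the simplicial-set definition via $i_1^*$ (observing that $i_1^*p$ is a Joyal fibration and that objects, edges, and lifting data match up because $i_1^*(W)_n = W_{n0}$), and then prove the key claim that $f$ is $p$-Cartesian if and only if $i_1^*f$ is $i_1^*p$-Cartesian by comparing the two homotopy-pullback squares of slices, using that $i_1^*$ is the right adjoint of a Quillen equivalence. If anything, you are more explicit than the paper about the one genuinely delicate point---the identification $i_1^*(V_{/f}) \simeq (i_1^*V)_{/i_1^*f}$ compatible with the structure maps of the squares---which the paper's proof uses silently when it passes from the square of simplicial-space slices to the square of quasi-categorical slices.
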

 
 \begin{proof}
  Before we start the proof, we recall that $i_1^*(W)_0 = W_{10}$, which means $W$ and $i_1^*W$ have the same set of edges.
  
  First, by \cref{the:jt ip}, if $p: V \to W$ is a CSS fibration then $i_1^*(p): i_1^*(V) \to i_1^*(W)$ is a fibration 
  in the Joyal model structure. Thus, by \cref{def:cart fib sset}, $i_1^*(p): i_1^*(V) \to i_1^*(W)$ is a Cartesian fibration of simplicial sets 
  if and only if every edge $f: x \to y$ in $i_1^*(W)$ with given lift $\hat{y}$ has a $i_1^*(p)$-Cartesian lift. 
  
  Hence the statement follows immediately from proving that an edge $f$ in $V$ is $p$-Cartesian if and only if 
  $f$ in $i_1^*(V)$ is $i_1^*(p)$-Cartesian.
  
  However, this follows from the fact that $i_1^*$ is a right Quillen functor of a Quillen equivalence and so 
    \begin{center}
   \begin{tikzcd}
    V_{/f} \arrow[r] \arrow[d] & V_{/y} \arrow[d] & & i_1^*(V)_{/f} \arrow[r] \arrow[d] & i_1^*(V)_{/y} \arrow[d]\\
    W_{/p(f)} \arrow[r] & W_{/p(y)} & & i_1^*(W)_{/p(f)} \arrow[r] & i_1^*(W)_{/p(y)}
   \end{tikzcd}
  \end{center}
  the right square is a homotopy pullback square if and only if the left hand square is a homotopy pullback square.
 \end{proof}
 
 We now want to move our study of Cartesian fibrations into the marked world and so need an appropriate definition. 
 
 \begin{defone} \label{def:cart fib marked simp spaces}
  Let $X$ be a simplicial space. We say a map of marked simplicial space $(Y,Y^{Cart}) \to X^\sharp$ is a 
  {\it Cartesian fibration} if $Y \to X$ is a Cartesian fibration and $Y^{Cart}$ is the sub-space 
  of Cartesian edges.
 \end{defone}
 
 We are now ready to show that 
 Cartesian fibrations (\cref{def:cart fib marked simp spaces}) are fibrant objects in a model structure.
  
  \begin{theone} \label{the:cart model structure marked simp spaces}
   Let $X$ be a simplicial space. There is a unique combinatorial, left proper, simplicial model structure on $(\ss^+)_{/X^\#}$, 
   called the {\it Cartesian model structure} and denoted $(\ss_{/X^\#})^{Cart}$, such that
   \begin{enumerate}
    \item Cofibrations are monomorphisms over $X^\#$.
    \item Fibrant objects are Cartesian fibrations $(Y,Y^{Cart}) \to X^\#$.
    \item A map is a weak equivalence if for every Cartesian fibration $(W,D) \to X^\sharp$ the induced map 
    $$\Map_{/X^\sharp}((Z,C), (W,D)) \to \Map_{/X^\sharp}((Y,B), (W,D)) $$
    is a Kan equivalence.
    \item A map between Cartesian fibrations $(W,W^{Cart}) \to (V,V^{Cart}))$ over $X^\sharp$ is 
    a weak equivalence (fibration) if and only if it is a weak equivalence (fibration) in the marked CSS model structure.
    \item The adjunction
    \begin{center}
     \adjun{((\ss^+)_{/X^\#})^{Cart}}{((\sset^+)_{/(\tplus)_!X^\#})^{Cart}}{(\tplus)_!}{u^*(\tplus)^!}
    \end{center}
    is a simplicial Quillen equivalence.
   \end{enumerate}
  \end{theone}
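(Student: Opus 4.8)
The plan is to mimic the construction of the marked CSS model structure in \cref{the:absolute Cartesian model structure on marked simplicial spaces}, now working over the base $X^\#$ and localizing against the Cartesian model structure on marked simplicial \emph{sets} rather than the marked Joyal one. First I would record that $(\tplus)_! X^\#$ is again a sharp object, say $(\tplus)_! X^\# = S^\#$ for a simplicial set $S$; this is a direct computation on generators. Granting this, \cref{cor:marked quillen equiv over} provides a simplicial Quillen equivalence $(\tplus)_! \colon (\ss^+)^{CSS^+}_{/X^\#} \rightleftarrows (\sset^+)^{Joy^+}_{/S^\#} \colon u^*(\tplus)^!$, and \cref{the:cart model structure marked simp set} provides the identity simplicial Quillen adjunction $(\sset^+)^{Joy^+}_{/S^\#} \rightleftarrows (\sset^+_{/S^\#})^{Cart}$. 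Composing these, I obtain a simplicial Quillen adjunction
\[
(\tplus)_! \colon (\ss^+)^{CSS^+}_{/X^\#} \rightleftarrows (\sset^+_{/S^\#})^{Cart} \colon u^*(\tplus)^!
\]
between left proper combinatorial simplicial model categories whose cofibrations are monomorphisms, which is exactly the input required by \cref{the:localizing adjunctions}.

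Applying \cref{the:localizing adjunctions} to this adjunction produces the model structure $(\ss_{/X^\#})^{Cart}$ on $(\ss^+)_{/X^\#}$, and several parts of the statement come for free, exactly as in \cref{the:absolute Cartesian model structure on marked simplicial spaces}: the cofibrations agree with those of $(\ss^+)^{CSS^+}_{/X^\#}$, i.e.\ monomorphisms over $X^\#$ (part~(1)); since the model structure is simplicial with the fibrant objects identified below, a map is a weak equivalence precisely when it induces Kan equivalences on mapping spaces into every fibrant object (part~(3)); and because $(\ss_{/X^\#})^{Cart}$ is a localization of the marked CSS over-category structure, fibrations and weak equivalences between fibrant objects coincide with those in the marked CSS model structure (part~(4)). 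Uniqueness follows once the cofibrations and fibrant objects are pinned down, and the Quillen adjunction half of part~(5) is part~(2) of \cref{the:localizing adjunctions}.

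The crux is part~(2). By the localization formalism (as used for part~(1) of \cref{the:absolute Cartesian model structure on marked simplicial spaces}) the fibrant objects of $(\ss_{/X^\#})^{Cart}$ are those that are fibrant in $(\ss^+)^{CSS^+}_{/X^\#}$ and lie in the essential image of $u^*(\tplus)^!$; the first condition already forces the underlying $Y \to X$ to be a CSS fibration. I would then compute $u^*(\tplus)^!$ on a Cartesian fibration $(T,T^{Cart}) \to S^\#$ of marked simplicial sets, writing $(Y,B)\to X^\#$ for the result. The key point is that $i_1^*$ commutes with the defining pullback and that the marked restriction satisfies $(\iplus)^*(\tplus)^!(T,T^{Cart}) \cong (T,T^{Cart})$, a short Yoneda computation using \cref{lemma:t shriek adjunction} and \cref{rem:generators marked simplicial sets yoneda} (here $\tau(1^+)=\Delta[1]^\sharp$). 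Consequently $i_1^*Y \to i_1^*X$ is the base change of $T \to S$ along $i_1^*X \to S$, hence a Cartesian fibration of simplicial sets, and $B$ restricts to its Cartesian edges; by \cref{lemma:cart fib def agree} and \cref{def:cart fib simplicial spaces} this shows $Y \to X$ is a Cartesian fibration of simplicial spaces with $B=Y^{Cart}$, i.e.\ a Cartesian fibration in the sense of \cref{def:cart fib marked simp spaces}. For the reverse inclusion I would use the Quillen equivalence of part~(5): every fibrant object is marked CSS equivalent over $X^\#$ to one in the image of $u^*(\tplus)^!$, and the Cartesian condition—being detected after $i_1^*$—is invariant under such equivalences between CSS fibrations.

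Finally, for the Quillen equivalence in part~(5) I would verify the hypothesis of \cref{the:localizing adjunctions}(3), that the derived functor of $u^*(\tplus)^!$ is fully faithful. Via \cref{prop:quillen equiv over cat} this reduces to the unlocalized statement already in \cref{cor:marked quillen equiv over} that $((\tplus)_!, u^*(\tplus)^!)$ is a simplicial Quillen equivalence over $X^\#$, together with the compatibility of the Cartesian and marked Joyal localizations under it; concretely, full faithfulness on fibrant objects comes down to the enrichment identity $t^!(T^S)\simeq (t^!T)^{t^!S}$ of \cref{rem:t joyal enriched}, exactly as in \cref{the:absolute Cartesian model structure on marked simplicial spaces}, and simpliciality is inherited from \cref{cor:marked quillen equiv over}. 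I expect the main obstacle to be part~(2): making the computation of $u^*(\tplus)^!$ on Cartesian fibrations precise and, in particular, verifying that the marking is sent exactly to the Cartesian edges, which is where the $i_1^*$-characterization of \cref{lemma:cart fib def agree} does the real work.
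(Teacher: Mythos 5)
Your proposal follows essentially the same route as the paper's proof: compose the over-category simplicial Quillen equivalence of \cref{cor:marked quillen equiv over} with the identity Quillen adjunction of \cref{the:cart model structure marked simp set}, apply \cref{the:localizing adjunctions} to obtain the model structure together with the simplicial Quillen equivalence of part (5), read off parts (1), (3), (4) from the localization formalism, and identify the fibrant objects as the marked CSS fibrant objects lying in the essential image of $u^*(\tplus)^!$. Your treatment of part (2) — using $(\iplus)^*$, the identity $(\iplus)^*(\tplus)^! \cong \id$, and base-change stability of Cartesian fibrations of simplicial sets — is in substance the same computation the paper performs (which directly identifies the image with $(t^!C,(t^!C)^{Cart})$, suppressing the unit pullback), only spelled out in more detail.
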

  
  \begin{proof}
   Combining \cref{cor:marked quillen equiv over} and \cref{the:cart model structure marked simp set}, we have simplicial Quillen adjunctions 
   \begin{equation} \label{eq:adjunctions}
    \begin{tikzcd}
     ((\ss^+)_{/X^\#})^{CSS^+} \arrow[r, shift left=1.8, "(\tplus)_!", "\bot"'] & 
     ((\sset^+)_{/(\tplus)_!X^\#})^{Joy^+} \arrow[r, shift left=1.8, "\id", "\bot"'] \arrow[l, shift left=1.8, "u^*(\tplus)^!"] &
     ((\sset^+)_{/(\tplus)_!X^\#})^{Cart} \arrow[l, shift left=1.8, "\id"]
    \end{tikzcd}
   \end{equation}
   between combinatorial simplicial model structures, where the right adjoint is fully faithful 
   (by \cref{cor:marked quillen equiv over}). Hence, by \cref{the:localizing adjunctions}, 
   we can define a new model structure on $\ss^+_{/X^\#}$, which we call the Cartesian model structure and that makes the 
   adjunction $((\tplus)_!,u^*(\tplus)^!)$ into a simplicial Quillen equivalence.
   
  Next, combining the facts that $(\tplus)_!$ reflects cofibrations (\cref{the:localizing adjunctions}) and that 
  cofibrations in $((\sset^+)_{/t_!X^\#})^{Cart}$ are monomorphisms implies that cofibrations in 
  $((\ss^+)_{/X^\#})^{Cart}$ are also monomorphisms.
  
  Next, we characterize the fibrant objects, which are precisely the fibrant objects in $(\ss^+_{/X})^{CSS^+}$ that are in the essential 
  image of $u^*(\tplus)^!$. By \cref{the:marked CSS equiv to Cartesian}, 
  if a map $(Y,B) \to X^\sharp$ is a marked CSS fibration, then $Y \to X$ is a CSS fibration of simplicial spaces. 
  On the other hand, $(Y,B)$ is in the image if $(Y,B) = (\tplus)^*(C,C^{Cart}) = (t^!C, (t^!C)^{Cart})$, which implies that 
  $B = Y^{Cart}$. Hence, the fibrant objects are precisely the Cartesian fibrations over $X^\#$.
    
  The characterization of weak equivalences follows from the fact that the model structure is simplicial and 
  that the fibrant objects are given by Cartesian fibrations of marked simplicial spaces. 
  
  Finally, the model structure is given as a localization model structure of the marked CSS model structure.
  Hence, fibrations (weak equivalences) between fibrant objects are given by fibrations (weak equivalences) 
  in the marked CSS model structure. 
\end{proof}

  We constructed the Cartesian model structure on $(\ss^+)_{/X}$ as a localization model structure. 
  Hence, we get following result.
  
  \begin{corone} \label{cor:cart local of css}
   Let $X$ be a simplicial space. Then the adjunction 
   \begin{center}
    \adjun{((\ss^+)_{/X})^{CSS^+}}{((\ss^+)_{/X})^{Cart}}{\id}{\id}
   \end{center}
   is a Quillen adjunction.  
  \end{corone}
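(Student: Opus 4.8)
The plan is to read the conclusion off directly from the construction of the Cartesian model structure carried out in the proof of \cref{the:cart model structure marked simp spaces}, where it was produced as a localization of the marked CSS model structure by means of \cref{the:localizing adjunctions}. The general principle I would invoke is that such a localization leaves the cofibrations unchanged while only enlarging the class of weak equivalences; consequently the identity functor out of the unlocalized structure is automatically left Quillen. So the whole corollary reduces to checking that the left adjoint $\id \colon ((\ss^+)_{/X})^{CSS^+} \to ((\ss^+)_{/X})^{Cart}$ preserves cofibrations and trivial cofibrations.

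Preservation of cofibrations is immediate, since \cref{the:localizing adjunctions} guarantees that a map is a cofibration in the localized structure if and only if it is one in $((\ss^+)_{/X})^{CSS^+}$; in both cases these are exactly the monomorphisms. For trivial cofibrations it suffices to show that every marked CSS weak equivalence is a Cartesian weak equivalence. Here I would use that the cofibrations are monomorphisms, so that every object is cofibrant, which lets me apply Ken Brown's lemma to the left Quillen functor $(\tplus)_! \colon ((\ss^+)_{/X})^{CSS^+} \to ((\sset^+)_{/(\tplus)_! X})^{Joy^+}$ of \cref{cor:marked quillen equiv over}: it sends every marked CSS weak equivalence to a marked Joyal weak equivalence. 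Composing with the identity Quillen adjunction from the marked Joyal to the Cartesian model structure on marked simplicial sets of \cref{the:cart model structure marked simp set}, which carries marked Joyal weak equivalences to Cartesian weak equivalences, shows that $(\tplus)_!$ sends marked CSS weak equivalences to Cartesian weak equivalences of marked simplicial sets. Since \cref{the:localizing adjunctions} defines the Cartesian weak equivalences of marked simplicial spaces to be precisely those maps $f$ for which $(\tplus)_! f$ is a Cartesian weak equivalence, this is exactly the statement that every marked CSS weak equivalence is a Cartesian weak equivalence.

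With both cofibrations and weak equivalences preserved, the identity preserves trivial cofibrations, so it is left Quillen and the adjunction $(\id, \id)$ is Quillen. I expect no serious obstacle, as the only content is the comparison of the two classes of weak equivalences, and that becomes entirely formal once one notes that every object is cofibrant so that Ken Brown's lemma applies. The single point to be careful about is to track the Cartesian weak equivalences through their definition via \cref{the:localizing adjunctions} (i.e.\ as the maps inverted after applying $(\tplus)_!$) rather than through an intrinsic mapping-space characterization, since it is the former description that makes the comparison transparent.
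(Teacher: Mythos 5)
Your proposal is correct and follows the same route as the paper: the paper deduces the corollary immediately from the fact that $((\ss^+)_{/X})^{Cart}$ was constructed via \cref{the:localizing adjunctions} as a localization of $((\ss^+)_{/X})^{CSS^+}$, which fixes the cofibrations and only enlarges the weak equivalences. Your additional verification that marked CSS weak equivalences become Cartesian weak equivalences (via Ken Brown's lemma applied to $(\tplus)_!$, using that all objects are cofibrant, and then tracing through the definition of the localized weak equivalences) simply makes explicit what the paper treats as immediate.
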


  We end this section by giving a second Quillen equivalence between the Cartesian model structures. 
  
 \begin{theone} \label{the:ip cart equiv}
  Let $S$ be a simplicial set. The adjunction 
  \begin{center}
   \adjun{((\sset^+)_{/S})^{Cart}}{((\ss^+)_{/(\pplus)^*S})^{Cart}}{(\pplus)^*}{u^*(\iplus)^*}
  \end{center}
  is a Quillen equivalence between Cartesian model structures.
 \end{theone}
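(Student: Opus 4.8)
The plan is to mirror the proof that $((\pplus)^*,(\iplus)^*)$ is a Quillen equivalence for the marked Joyal and marked CSS structures in \cref{the:marked CSS equiv to Cartesian}: first I would establish that $((\pplus)^*,u^*(\iplus)^*)$ is a Quillen adjunction between the two Cartesian model structures, and then deduce that it is a Quillen equivalence by a two-out-of-three argument against the Quillen equivalence $((\tplus)_!,u^*(\tplus)^!)$ supplied by \cref{the:cart model structure marked simp spaces}. Throughout I read the base $S$ as the fully marked $S^\#$, so that $(\pplus)^*S = (\pplus)^*(S^\#) = X^\#$ with $X = p_1^*S$; this places us precisely in the setting of \cref{the:cart model structure marked simp spaces}. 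Note also that the unit $S \to (\iplus)^*(\pplus)^*S$ is an isomorphism, since $\pplus \circ \iplus = \id_{\Delta^+}$ forces $(\iplus)^*(\pplus)^* = \id$; hence $u^*$ is trivial and the right adjoint agrees, up to isomorphism, with $(\iplus)^*$ on the over-categories.

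For the Quillen adjunction I would follow the template of \cref{the:marked CSS equiv to Cartesian}. The left adjoint $(\pplus)^*$ preserves cofibrations, as these are monomorphisms in both structures and the restriction functor $(\pplus)^*$ preserves monomorphisms. Since all objects are cofibrant, it then suffices to check that $(\iplus)^*$ preserves fibrant objects and fibrations between fibrant objects. A fibrant object is a Cartesian fibration $(Y,Y^{Cart}) \to (\pplus)^*S$ (\cref{def:cart fib marked simp spaces}); applying $(\iplus)^*$ produces $(i_1^*Y, i_1^*Y^{Cart}) \to S$, using that $(\iplus)^*$ restricts to $i_1^*$ on underlying objects and that $(\iplus)^*(\pplus)^*S = S$. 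By \cref{def:cart fib simplicial spaces} the underlying map $i_1^*Y \to i_1^*X = S$ is a Cartesian fibration of simplicial sets exactly because $Y \to X$ is one of simplicial spaces, and by the edge-correspondence established in the proof of \cref{lemma:cart fib def agree} the marking $i_1^*Y^{Cart}$ is precisely the set of Cartesian edges of $i_1^*Y$; hence $(\iplus)^*(Y,Y^{Cart})$ is a Cartesian fibration of marked simplicial sets. For fibrations between fibrant objects, \cref{the:cart model structure marked simp spaces}(4) identifies these with marked CSS fibrations, which $(\iplus)^*$ carries to marked Joyal fibrations by \cref{cor:marked quillen equiv over}; and a marked Joyal fibration between Cartesian-fibrant marked simplicial sets is a fibration in the localized Cartesian structure by \cref{the:cart model structure marked simp set}. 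This yields the Quillen adjunction.

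To upgrade to a Quillen equivalence I would extend the adjunction by the left Quillen equivalence $(\tplus)_!$:
\[
((\sset^+)_{/S})^{Cart} \xrightarrow{\ (\pplus)^*\ } ((\ss^+)_{/(\pplus)^*S})^{Cart} \xrightarrow{\ (\tplus)_!\ } ((\sset^+)_{/(\tplus)_!(\pplus)^*S})^{Cart}.
\]
As in the proof of \cref{the:marked CSS equiv to Cartesian}, the composite $(\tplus)_!(\pplus)^*$ is the identity on $\sset^+$, so the target is again $((\sset^+)_{/S})^{Cart}$ and the composite of left adjoints is the identity functor, trivially a Quillen equivalence. By \cref{the:cart model structure marked simp spaces}(5), $(\tplus)_!$ is itself a Quillen equivalence. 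Because all objects are cofibrant, the total left derived functors compose on the nose, so $\mathbb{L}(\tplus)_! \circ \mathbb{L}(\pplus)^* \cong \mathbb{L}(\id) = \id$ on homotopy categories; since $\mathbb{L}(\tplus)_!$ is an equivalence, so is $\mathbb{L}(\pplus)^*$, and therefore $((\pplus)^*,u^*(\iplus)^*)$ is a Quillen equivalence by two-out-of-three.

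The main obstacle is the fibrancy step, namely verifying that $(\iplus)^*$ sends Cartesian fibrations of marked simplicial spaces to Cartesian fibrations of marked simplicial sets together with the correct marked edges. The statement about the underlying map is immediate from \cref{def:cart fib simplicial spaces}, but the bookkeeping that the marking $Y^{Cart}$ is carried onto the Cartesian edges of $i_1^*Y$ rests on the identification of $p$-Cartesian edges under $i_1^*$ proved in \cref{lemma:cart fib def agree}. Once this compatibility is secured, the remaining steps — cofibration preservation, the identification $(\tplus)_!(\pplus)^* = \id$, and the two-out-of-three conclusion — are formal.
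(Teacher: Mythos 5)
Your proposal is correct and takes essentially the same route as the paper: the same Quillen-adjunction checks (cofibrations are monomorphisms preserved by $(\pplus)^*$; $(\iplus)^*$ preserves fibrant objects and fibrations/weak equivalences between fibrant objects via the marked CSS--marked Joyal comparison), followed by the same extension of the adjunction by $((\tplus)_!,u^*(\tplus)^!)$ and a two-out-of-three argument using that the composite $(\tplus)_!(\pplus)^*$ is the identity. Your use of \cref{lemma:cart fib def agree} to verify that $i_1^*Y^{Cart}$ is exactly the set of Cartesian edges of $i_1^*Y$ spells out a step the paper dispatches as ``direct computation,'' but it is the same underlying idea.
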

 
 \begin{proof}
  In order to prove it is a Quillen adjunction it suffices to show the left adjoint preserves cofibrations and 
  the right adjoint preserves fibrant objects and trivial fibrations between fibrant objects. 
  Evidently, $(\pplus)^*$ preserves cofibrations as they are just monomorphisms. 
  
  Moreover, $(\iplus)^*$ preserves trivial fibrations between fibrant objects.
  Indeed, by \cref{the:cart model structure marked simp spaces}, weak equivalences between fibrant objects are 
  just marked CSS equivalences, which, by \cref{the:marked CSS equiv to Cartesian}, are mapped to marked Joyal equivalences, 
  which, by \cref{the:cart model structure marked simp set}, are Cartesian equivalences. 
  
  We are left with proving that $(\iplus)^*$ preserves fibrant object. However, by \cref{the:cart model structure marked simp spaces},
  fibrant objects are of the form  $(Y,Y^{Cart}) \to X^\sharp$, where $ Y \to X$ is a Cartesian fibration. 
  By direct computation, $(\iplus)^*((Y,Y^{Cart}) \to X^\sharp) = (i_1^*Y, i_1^*Y^{Cart}) \to i_1^*X^\sharp$, which 
  is fibrant in the Cartesian model structure in $(\sset^+_{/i_1^*X^\sharp})^{Cart}$ (\cref{the:cart model structure marked simp set}).
  
  We move on to prove that the adjunction is a Quillen equivalence.
  We can extend the adjunction above to the following diagram
  \begin{center}
   \begin{tikzcd}[row sep=0.5in, column sep=0.5in]
    ((\sset^+)_{/S^\#})^{Cart} \arrow[r, shift left=1.8, "(\pplus)^*", "\bot"'] & 
    ((\ss^+)_{/(\pplus)^*S^\#})^{Cart} \arrow[r, shift left=1.8, "(\tplus)_!", "\bot"'] \arrow[l, shift left=1.8, "u^*(\iplus)^*"]&
    ((\sset^+)_{/S^\#})^{Cart} \arrow[l, shift left=1.8, "u^*(\tplus)^!"]
   \end{tikzcd}.
  \end{center}
  The second adjunction is a Quillen equivalence by the previous theorem. The composition is the identity and 
  so also a Quillen equivalence. Hence the left hand Quillen adjunction is a Quillen equivalence, by $2$-out-of-$3$.
 \end{proof}

\section{Equivalence of Cartesian Model Structures: Model Dependent and Independent} \label{sec:equivalence of Cartesian model structures}
 Until this point we have studied many different model structures, that we all denoted by {\it Cartesian model structure}: 
 for marked simplicial sets, marked simplicial spaces, bisimplicial sets and bisimplicial spaces.
 We are finally in a position where we can show that all these model structures are in fact Quillen equivalent 
 via a zig-zag of Quillen equivalence and hence we are justified in giving all of them the same name.
 
 Finally, in the last subsection, we show how this immediately gives us an equivalence of underlying quasi-categories, 
 where we can replace a zig-zag of equivalences with direct adjoint equivalences. 
 
\subsection{Quillen Equivalence of Cartesian Model Structures} \label{subsec:quillen Equivalence of Cartesian Model Structures}
 
 In this section we want to prove that we have following diagram of Quillen equivalences (the direction of the arrows are the direction of 
 the right adjoints).
 Here an arrow marks a right Quillen functor of a Quillen equivalence, $X$ is a simplicial space and $S$ a simplicial set. 
 \begin{center}
  \begin{tikzcd}[row sep=0.5in, column sep=0.8in]
   \text{Marked Simplicial} & \text{Functorial} & \text{Bisimplicial} \\[-0.55in] \hline
   \\[-0.3in]
   (\sset^+_{/t_!X})^{Cart} \arrow[d, "\arrowref{qe:t}{((\tplus)_! \comma u^*(\tplus)^!)}" description] & 
   \Fun(\mathfrak{C}[t_!X], (\sset^+)^{Joy^+})^{proj} 
   \arrow[l, "\arrowref{qe:stunplus}{(\St_{t_!X}^+ \comma \Un_{t_!X}^+)}" description] 
   \arrow[d, "\arrowref{qe:betastar}{(((\tplus)_!)_* \comma ((\tplus)^!)_*)}" description]  & \\
   (\ss^+_{/X})^{Cart}  & \Fun(\mathfrak{C}[t_!X], (\ss^+)^{CSS^+})^{proj}
   \arrow[d, "\arrowref{qe:tstar}{(((-)^\flat)_* \comma (\Forget)_*)}" description]& 
   (\sss_{/X})^{Cart}  \\
   &  \Fun(\mathfrak{C}[t_!X], \ss^{CSS})^{proj}  
   \arrow[r, "\arrowref{qe:stun}{(\sSt_{t_!X} \comma \sUn_{t_!X})}" description]  & 
   (\ssset_{/t_!X})^{Cart} \arrow[u, "\arrowref{qe:st}{(st_! \comma su^*t^!)}" description] \\
   \\[-0.4in]
   (\sset^+_{/S})^{Cart} & 
   \Fun(\mathfrak{C}[S], (\sset^+)^{Joy^+})^{proj} \arrow[l, "\arrowref{qe:stunplus}{(\St_{S}^+ \comma \Un_{S}^+)}" description] 
   \arrow[d, "\arrowref{qe:betastar}{(((\tplus)_!)_* \comma ((\tplus)^!)_*)}" description]  \\
   (\ss^+_{/p_1^*S})^{Cart} \arrow[u, "\arrowref{qe:pi}{((\pplus)^* \comma u^*(\iplus)^*)}" description] & 
   \Fun(\mathfrak{C}[S], (\ss^+)^{CSS^+})^{proj}  
   \arrow[d, "\arrowref{qe:tstar}{(((-)^\flat)_* \comma (\Forget)_*)}" description] & 
   (\sss_{/p_1^*S})^{Cart} \arrow[d, "\arrowref{qe:spi}{(sp_1^* \comma su^*i_1^*)}" description] \\
   &  \Fun(\mathfrak{C}[S], \ss^{CSS})^{proj}  
   \arrow[r, "\arrowref{qe:stun}{(\sSt_{S} \comma \sUn_{S})}" description] & 
   (\ssset_{/S})^{Cart}  
  \end{tikzcd}
 \end{center}
  
  \setword{$\bullet {\bf ((\tplus)_!,u^*(\tplus)^!):}$}{qe:t}
  This is proven in \cref{the:cart model structure marked simp spaces}.
  
  \medskip

  \setword{${\bf \bullet ((\pplus)^*,u^*(\iplus)^*):}$}{qe:pi}
  This is proven in \cref{the:ip cart equiv}.
  \medskip 
  
   \setword{${\bf \bullet (st_!,su^*t^!):}$}{qe:st}
  This is proven in \cref{the:cart model structure bisimp set}.
  
  \medskip 
  
  \setword{${\bf \bullet (sp_1^*,su^*i_1^*):}$}{qe:spi}
  This is proven in \cref{the:cart model structure bisimp set}.
  
  \medskip

  \setword{${\bf \bullet (((\tplus)_!)_* , ((\tplus)^!)_*):}$}{qe:betastar}
  This follows from applying \cref{lemma:projective model invariant} to the Quillen equivalence $(((\tplus)_!)_*,((\tplus)_!)_*)$ 
  (\cref{the:absolute Cartesian model structure on marked simplicial spaces}).
  
  \medskip 
    
  \setword{$\bullet {\bf (((-)^\flat)_* , (\Forget)_*):}$}{qe:tstar}
  This follows from applying \cref{lemma:projective model invariant} to the Quillen equivalence $(((-)^\flat)_*,(\Forget)_*)$ 
  (\cref{the:marked CSS equiv to Cartesian}).
  
  \medskip
  
  \setword{${\bf \bullet (\sSt_S,\sUn_S):}$}{qe:stun}
  This follows from applying \cref{the:equiv of CSO} to the Quillen equivalence $(\St_S,\Un_S)$ \cite[Theorem 2.2.1.2]{lurie2009htt}.
  
  \medskip 
  
  \setword{${\bf \bullet (\St_S^+,\Un_S^+):}$}{qe:stunplus}
  This is the statement of \cite[Theorem 3.2.0.1]{lurie2009htt}.
  
  \medskip 
  
\subsection{Equivalences of Quasi-Categories} \label{subsec:equivalences of Quasi-Categories}
 The work here focused on model categorical formalism, but in this subsection we want to give short analysis of the 
 quasi-categorical implications of this result. The model structures from 
 \cref{subsec:quillen Equivalence of Cartesian Model Structures} are all simplicial. So, we can apply the simplicial nerve 
 $N$ (\cite[1.1.5]{lurie2009htt}) and the the fact that Quillen equivalences of simplicial model categories gives us adjoint equivalences of 
 quasi-categories \cite[Proposition 5.2.4.6]{lurie2009htt}, to get a diagram of equivalences of quasi-categories 
 (here we only focus on the case over a simplicial space $X$):
 
  \begin{center}
  \begin{tikzcd}[row sep=0.5in, column sep=0.8in]
   N\left((\sset^+_{/t_!X})^{Cart}\right) \arrow[d, "((\tplus)_! \comma u^*(\tplus)^!)" description, dash] & 
   N\left(\Fun(\mathfrak{C}[t_!X], (\sset^+)^{Cart})^{proj}\right) 
   \arrow[l, "(\St_{t_!X}^+ \comma \Un_{t_!X}^+)" description, dash] 
   \arrow[d, "(((\tplus)_!)_* \comma ((\tplus)^!)_*)" description, dash]  & \\
   N\left((\ss^+_{/X})^{Cart}\right)  & N\left(\Fun(\mathfrak{C}[t_!X], (\ss^+)^{CSS^+})^{proj}\right)
   \arrow[d, "(((-)^\flat)_* \comma (\Forget)_*)" description, dash]& 
   N\left((\sss_{/X})^{Cart}\right)  \\
   &  N\left(\Fun(\mathfrak{C}[t_!X], \ss^{CSS})^{proj}\right)  
   \arrow[r, "(\sSt_{t_!X} \comma \sUn_{t_!X})" description, dash]  & 
   N\left((\ssset_{/t_!X})^{Cart}\right) \arrow[u, "(st_! \comma su^*t^!)" description, dash]
  \end{tikzcd}
 \end{center}
 By \cite[2.1.12]{riehlverity2018elements}, whenever we have an adjoint equivalence of quasi-categories, both functors 
 are right and left adjoints. Thus we can ignore the direction of the arrows. This is clearly not true for Quillen equivalences 
 of model categories, as functors (even equivalences) are usually not right and left Quillen.

 \bibliographystyle{alpha}
 \bibliography{main}

\end{document}